\def\R{{\mathbb{R}}}
\def\loc{{\mathrm{loc}}}
\DeclareMathOperator*{\essinf}{ess\, inf}
\DeclareMathOperator*{\esssup}{ess\, sup}
\def\dint{\displaystyle\int}
\DeclareMathOperator{\sgn}{sgn}
\DeclareMathOperator*{\supp}{supp}
\numberwithin{equation}{section} 
\newcommand{\mathe}{\mathrm{e}}
\newcommand{\mathi}{{\mathbf i}}
\def\corrauth{\footnote{Corresponding author.
 }
 \stepcounter{footnote}
 }
\def\affilnum#1{${}^{#1}$}
\def\affil#1{${}^{#1}$}
\def\comma{$^{\textrm{,}}$}
\theoremstyle{definition} 
\newtheorem{theorem}{\indent
                  Theorem}[section]
    \newtheorem{lemma}{\indent  Lemma} [section]
    \newtheorem{definition}{\indent  Definition} [section]
        \newtheorem{remark}{\indent  Remark}  
\theoremstyle{nonumberplain} 
\newtheorem{proof}{\indent Proof}
\renewcommand*{\author}[2][?]{
     \gdef\shortauthor{?}
     \gdef\@author{#2}
   \ifthenelse{\equal{#1}{?}}
     { \gdef\shortauthor{\let\comma=\empty \let\corrauth=\empty \renewcommand{\affil}[1]{} #2} }
     { \gdef\shortauthor{#1}}
}
\def\@setauthor{\begin{center}
  \sc  \@author
    \end{center}%
}
\title{\bf\Large
Characterization of Lipschitz Functions via Commutators of Multilinear Singular Integral Operators in\\ Variable Lebesgue Spaces  }
\author[J. WU and P. Zhang] { Jianglong Wu\affil{1} and Pu Zhang\affil{1}\comma\corrauth 
\\ 
{\affilnum{1}\footnotesize\it Department of Mathematics, Mudanjiang Normal University, Mudanjiang, 157011, China}
}
\date{} 
\begin{document}

\maketitle


\footnote{\textit{Foundation item}:\  Supported  by NNSF-China (Grant No.11571160) and MNU (No.D211220637).}   
\vspace{-4em}
\renewcommand\abstractname{}
\begin{abstract}
\setlength{\parindent}{0pt}\setlength{\parskip}{1.5explus0.5exminus0.2ex}%
\noindent
{\textbf{Abstract}:} In this paper, the main aim  is to consider the boundedness of commutators of multilinear Calder\'{o}n-Zygmund operators with Lipschitz functions  in the context of the variable exponent Lebesgue spaces. Furthermore,   the variable versions of the Lipschitz spaces are also discussed, and  with the help of a key tool  of pointwise estimate involving the  sharp maximal operator of the multilinear fractional commutator and certain associated maximal operators.

\smallskip
 {\bf Keywords:}  \ multilinear commutator; singular integral operator; Lipschitz function; variable exponent

 { \bf AMS(2010) Subject Classification:}  \ 47B47; 42B20; 42B35   \ \
\end{abstract}

\section{Introduction}

Let $T$ be the classical singular integral operator. The commutator $[b, T]$ generated by $T$ and a
suitable function $b$ is defined by
\begin{align*} 
 [b,T]f      & = bT(f)-T(bf).
\end{align*}

It is well known that the commutators are intimately related to the regularity properties of the solutions of certain partial differential equations (PDE, see for example   \cite{difazio1993interior,bramanti1995commutators,rios2003lp}). 
The continuity properties of such commutators, studied in several literatures, have contributed to the development of the PDF's (such as \cite{bernardis2006weighted,cruz2003endpoint,perez1995endpoint,perez2014end} ). 

The first result for the commutator $[b,T]$ was established by   Coifman, Rochberg and Weiss in \cite{coifman1976factorization}, and the authors proved that the BMO is characterized by the boundedness of the singular integral operators' commutator $[b,T]$. In 1978,  Janson \cite{janson1978mean} generalized the results in \cite{coifman1976factorization} to functions belonging to a Lipschitz functional space and gave a characterization in terms of the boundedness of the commutators of singular integral operators with  Lipschitz functions. In 1982, Chanillo \cite{chanillo1982note} proved that BMO can be characterized by mean of the boundedness between Lebesgue spaces of the commutators
of fractional integral operators with BMO functions. In 1995, Paluszy{\'n}ski \cite{paluszynski1995characterization}  gives
some results in the spirit of \cite{chanillo1982note} for the functions belonging to Lipschitz function spaces.

The multilinear Calder\'{o}n-Zygmund theory was first studied by Coifman and Meyer in \cite{coifman1975commutators,coifman1978commutateurs}. This theory was then further investigated by many authors in the last few decades, see for example \cite{grafakos2002multilinear,lerner2009new,grafakos2014multilinear}, for the theory of multilinear Calder\'{o}n-Zygmund  operators with kernels satisfying the standard estimates.  

In 2009, Lerner et al. \cite{lerner2009new} developed a multiple-weight theory that adapts to the multilinear Calder\'{o}n-Zygmund operators.
They established the multiple-weighted norm inequalities for the multilinear Calder\'{o}n-Zygmund operators and their commutators.

The theory of function spaces with variable exponent has been intensely investigated in the past twenty years since some elementary properties were established by Kov{\'a}{\v{c}}ik  and R{\'a}kosn{\'\i}k in \cite{kovavcik1991spaces}. In 2003, Diening and R\r{u}\u{z}i\u{c}ka \cite{diening2003calderon} studied the Calder\'{o}n-Zygmund operators on variable exponent Lebesgue spaces and gave some applications to problems related to fluid dynamics. In 2006, by applying the theory of weighted norm inequalities and extrapolation, Cruz-Uribe et al. \cite{cruz2006theboundedness} showed that many classical operators in harmonic analysis are bounded on the variable exponent Lebesgue space. For more information on function spaces with variable exponent, we refer to \cite{diening2011lebesgue,cruz2013variable}.

Motivated by the work mentioned above, the main aim of this paper is to consider the boundedness of commutators of multilinear Calder\'{o}n-Zygmund operators with Lipschitz functions  in the context of the variable exponent Lebesgue spaces. In addition, we also consider the variable versions of the Lipschitz spaces, studied in \cite{ramseyer2013lipschitz,cabral2016extrapolation,pradolini2017characterization}.

Let  $\mathbb{R}^{n}$ be an $n$-dimensional Euclidean space and $(\mathbb{R}^{n})^{m}= \mathbb{R}^{n} \times \cdots \times \mathbb{R}^{n}$ be an $m$-fold product space ($m\in  \mathbb{N}$).
We denote by $\mathscr{S}(\mathbb{R}^{n})$ the space of all Schwartz functions on  $\mathbb{R}^{n}$ and by $\mathscr{S}'(\mathbb{R}^{n})$ its dual space, the set of all tempered distributions on  $\mathbb{R}^{n}$.

 \begin{definition}\label{def:CZK}  
 A locally integrable function $K(x, y_{1},\dots , y_{m})$, defined away from the diagonal $x = y_{1} = \cdots = y_{m} $ in
$(\mathbb{R}^{n})^{m+1}$ , is called an $m$-linear  Calder\'{o}n-Zygmund kernel, if there exists a constant $A > 0$ such that the following conditions are satisfied.
\begin{enumerate}[leftmargin=2em,label=(\arabic*),itemindent=1.5em]  
\item  Size estimate: for   all  $(x,y_{1},\dots , y_{m})\in (\mathbb{R}^{n})^{m+1}$ with $x\neq y_{j}$ for some $j\in\{1,2,\dots,m\}$, there has
 \begin{align} \label{equ:CZK-1}
|K(x,y_{1},\dots , y_{m})|  &\le \dfrac{A}{(|x-y_{1}|+\cdots+|x-y_{m}|)^{mn}}.
\end{align}
\item    Smoothness estimates: assume that for some $\epsilon>0$, and for each $j\in\{1,2,\dots,m\}$, there are regularity conditions
  \begin{align} \label{equ:CZK-2}    
|K(x,y_{1},\dots,y_{j}, \dots , y_{m}) -K(x',y_{1},\dots,y_{j}, \dots , y_{m}) |  &\le \dfrac{A|x-x'|^{\epsilon}}{ \Big (\sum\limits_{j=1}^{m}|x-y_{j}| \Big)^{mn+\epsilon}}
\end{align}
whenever $|x-x'| \le \dfrac{1}{2}   \max\limits_{1\le j\le m} |x- y_{j}|$,
and  for each fixed $j$ with $1\le j \le m$,
 \begin{equation} \label{equ:CZK-3}    
|K(x,y_{1},\dots,y_{j}, \dots , y_{m}) -K(x,y_{1},\dots,y_{j}', \dots , y_{m}) | \le \dfrac{A|y_{j}-y_{j}'|^{\epsilon}}{ \Big (\sum\limits_{j=1}^{m}|x-y_{j}| \Big)^{mn+\epsilon}}
\end{equation}
whenever $|y_{j}-y_{j}'| \le \dfrac{1}{2}  \max\limits_{1\le j\le m} |x- y_{j}|$.
\end{enumerate}
\end{definition}

We say $T:\mathscr{S}(\mathbb{R}^{n})\times\cdots \times\mathscr{S}(\mathbb{R}^{n}) \to \mathscr{S}'(\mathbb{R}^{n})$ is an $m$-linear singular integral operator with an $m$-linear  Calder\'{o}n-Zygmund kernel, $K(x,y_{1},\dots , y_{m})$, if
\begin{align*}
T(f_{1},\dots , f_{m})(x)  &= \dint_{(\mathbb{R}^{n})^{m}} K(x,y_{1},\dots , y_{m}) f_{1}(y_{1})\cdots f_{m}(y_{m}) dy_{1} \cdots dy_{m}
\end{align*}
whenever $x\notin \bigcap_{j=1}^{m} \supp f_{j}$ and each $f_{j} \in C_{c}^{\infty}(\mathbb{R}^{n}), j=1,\dots,m$.

If $T$ is bounded from  $L^{p_1}(\mathbb{R}^{n}) \times L^{p_2}(\mathbb{R}^{n}) \times\cdots\times L^{p_m}(\mathbb{R}^{n}) $ to $L^{p}(\mathbb{R}^{n})$ with $1<p_1,\dots,p_m<\infty$ and $\frac{1}{p}= \frac{1}{p_{_{1}}} + \frac{1}{p_{_{2}}}+\cdots+\frac{1}{p_{_{m}}}$, then we say that $T$ is an $m$-linear  Calder\'{o}n-Zygmund operator
(see \cite{grafakos2002multilinear,huang2010multilinear,lu2014multilinear} for more details).
If  $ K(x,y_{1},\dots , y_{m})$ is of form  $ K(x-y_{1},\dots , x-y_{m})$, then  $T$ is called an operator of convolution type.

Let $\vec{b} =(b_{1},b_{2},\dots,b_{m})$ be a collection of locally integrable functions, the $m$-linear commutator of $T$ with $\vec{b}$ is defined by
\begin{align*}
T_{_{\Sigma \vec{b}}}(\vec{f})(x) &= T_{_{\Sigma \vec{b}}}(f_{1},\dots , f_{m})(x)  = \sum_{j=1}^{m} T_{_{b_{j}}}(\vec{f})(x),
\end{align*}
where each term is the commutator of $b_{j}$ and $T$ in the $j$-th entry of $T$ , that is,
\begin{align*}
T_{_{b_{j}}}(\vec{f})(x) &= [b_{j},T](\vec{f}) (x) =b_{j}(x)T(f_{1},\dots,f_{j} ,\dots, f_{m})(x)   -T(f_{1},\dots,b_{j}f_{j} ,\dots, f_{m})(x)
\end{align*}
for every $j=1,2,\dots,m$. This definition coincides with the linear commutator $[b,T ]$ when $m = 1$.
And the iterated commutator $T_{_{\Pi \vec{b}}}(\vec{f})$ is defined via \cite{perez2014end}
\begin{align*}
T_{_{\Pi \vec{b}}}(\vec{f})(x) &= [b_{1},[b_{2},\dots[b_{m-1},[b_{m},T]_{m}]_{m-1}\dots]_{2}]_{1}(\vec{f}) (x).
\end{align*}
To clarify the notation, the commutators can be wrote formally as  
\begin{align*}
T_{_{\Sigma \vec{b}}}(\vec{f})(x)  &= \sum_{j=1}^{m} \int_{(\mathbb{R}^{n})^{m}} (b_{j}(x)-b_{j}(y_{j})) K(x,y_{1},\dots , y_{m}) f_{1}(y_{1})\cdots f_{m}(y_{m}) dy_{1} \cdots dy_{m}   \\
&= \sum_{j=1}^{m}  \int_{(\mathbb{R}^{n})^{m}}  (b_{j}(x)-b_{j}(y_{j})) K(x,\vec{y}) \prod_{i=1}^{m} f_{i}(y_{i})  d\vec{y};\\
T_{_{\Pi \vec{b}}}(\vec{f})(x) &= \int_{(\mathbb{R}^{n})^{m}} \left(\prod_{j=1}^{m}(b_{j}(x)-b_{j}(y_{j})) \right) K(x,y_{1},\dots , y_{m}) f_{1}(y_{1})\cdots f_{m}(y_{m}) dy_{1} \cdots dy_{m}  \\
&= \int_{(\mathbb{R}^{n})^{m}} \left( \prod_{j=1}^{m}(b_{j}(x)-b_{j}(y_{j})) \right) K(x,\vec{y}) \prod_{i=1}^{m} f_{i}(y_{i})  d\vec{y}.
\end{align*}

When $m=1$, $T_{_{\Sigma \vec{b}}}(\vec{f}) = T_{_{\Pi \vec{b}}}(\vec{f}) = [b,T]f = bT(f) - T(bf)$, which is the well  known  classical commutator studied in \cite{coifman1976factorization}. These multilinear commutators are early appeared in  \cite{xu2006generalized}.

Throughout this paper, the letter $C$  always stands for a constant  independent of the main parameters involved and whose value may differ from line to line.
A cube $Q \subset\mathbb{R}^{n} $ always means a cube whose sides are parallel to the coordinate axes and denote its side length by $l(Q)$.
For some $t>0$, the  notation  $tQ$ stands   for  the cube with the same center as  $Q$ and with side length  $l(tQ)=t l(Q)$.
 Denote by $|S|$ the Lebesgue measure  and  by $\chi_{_{\scriptstyle S}}$ the characteristic function  for a measurable set  $S\subset\mathbb{R}^{n}$. $B(x,r)$ means the ball cenetered at $x$ and of radius $r$, and $B_{0}=B(0,1)$.
For any index $1< q(x)< \infty$, we denote by $q'(x)$ its conjugate index,
namely, $q'(x)=\frac{q(x)}{q(x)-1}$. And we will occasionally use the notational $\vec{f}=(f_{1},\dots , f_{m})$, $T(\vec{f})=T(f_{1},\dots , f_{m})$, $d\vec{y}=dy_{1}\cdots  dy_{m}$ and $(x,\vec{y})=(x,y_{1},\dots , y_{m})$ for convenience.
For a set $E$ and a positive integer $m$, we will use the notation $(E)^{m}=\underbrace{E\times \cdots \times E}_{m}$ sometimes.


\section{Preliminaries}

Over last three decades, the study of variable exponent function spaces have attracted many authors' attention(see \cite{cruz2006theboundedness,diening2011lebesgue,cruz2013variable,diening2003calderon}   et al.). In fact, many classical operators are discussed in variable exponent function spaces(see \cite{cruz2006theboundedness,diening2011lebesgue,cruz2013variable}).  

In this section, we give the definition of Lebesgue spaces with variable exponent, and state basic properties
and useful lemmas.

\subsection{Function spaces with variable exponent}

Let $\Omega$ be a measurable set in $\R^{n}$ with $|\Omega|>0$. We first define variable exponent Lebesgue spaces.

\begin{definition} \label{def.2.1}
\ \ Let ~$ q(\cdot): \Omega\to[1,\infty)$ be a measurable function.

 \begin{list}{}{}  
\item [(i)]  \ The Lebesgue spaces with variable exponent $L^{q(\cdot)}(\Omega)$ is defined by
  $$ L^{q(\cdot)}(\Omega)=\{f~ \mbox{is measurable function}:  F_{q}(f/\eta)<\infty ~\mbox{for some constant}~ \eta>0\}, $$
  where $F_{q}(f):=\int_{\Omega} |f(x)|^{q(x)} \mathrm{d}x$. The Lebesgue space $L^{q(\cdot)}(\Omega)$ is a Banach function space with respect to the norm
  \begin{equation*}
   \|f\|_{L^{q(\cdot)}(\Omega)}=\inf \Big\{ \eta>0:  F_{q}(f/\eta)=\int_{\Omega} \Big( \frac{|f(x)|}{\eta} \Big)^{q(x)} \mathrm{d}x \le 1 \Big\}.
\end{equation*}
 \item[(ii)] \ The space $L_{\loc}^{q(\cdot)}(\Omega)$ is defined by
  $$ L_{\loc}^{q(\cdot)}(\Omega)=\{f ~\mbox{is measurable}: f\in L^{q(\cdot)}(\Omega_{0}) ~\mbox{for all compact  subsets}~ \Omega_{0}\subset \Omega\}.  $$

 \item[(iii)] \ The weighted Lebesgue space $L_{\omega}^{q(\cdot)}(\Omega)$ is defined by as the set of all measurable functions for which $$\|f\|_{L^{q(\cdot)}_{\omega}(\Omega)}=\|\omega f\|_{L^{q(\cdot)}(\Omega)}<\infty.$$
\end{list}
\end{definition}

Next we define some classes of variable  exponent functions. Given a function $f\in L_{\loc}^{1}(\R^{n})$, the Hardy-Littlewood maximal operator $M$ is defined by
$$Mf(x)= \sup_{Q\ni x} \dfrac{1}{|Q| } \int_{Q}  |f(y)| \mathrm{d}y. $$

\begin{definition} \label{def.variable-exponent}\ \
 \ Given a measurable function $q(\cdot)$ defined on $\R^{n}$. For $E\subset \mathbb{R}^{n}$, we write
$$
q_{-}(E):=\essinf_{x\in E} q(x),\ \
q_{+}(E):= \esssup_{x\in E} q(x),$$
and write $q_{-}(\mathbb{R}^{n}) = q_{-}$ and $q_{+}(\mathbb{R}^{n}) = q_{+}$ simply.

\begin{list}{}{}
\item[(i)]\   $q'_{-}=\essinf\limits_{x\in \mathbb{R}^{n}} q'(x)=\frac{q_{+}}{q_{+}-1},\ \ q'_{+}= \esssup\limits_{x\in \R^{n}} q'(x)=\frac{q_{-}}{q_{-}-1}.$

\item[(ii)]\ Denote by $\mathscr{P}_{0}(\mathbb{R}^{n})$
the set of all measurable functions $ q(\cdot): \mathbb{R}^{n}\to(0,\infty)$ such that
$$0< q_{-}\le q(x) \le q_{+}<\infty,\ \ x\in \mathbb{R}^{n}.$$

\item[(iii)]\ Denote by $\mathscr{P}_{1}(\mathbb{R}^{n})$
the set of all measurable functions $ q(\cdotp): \mathbb{R}^{n}\to[1,\infty)$ such that
$$1\le q_{-}\le q(x) \le q_{+}<\infty,\ \ x\in \mathbb{R}^{n}.$$

\item[(iv)]\ Denote by $\mathscr{P}(\R^{n})$ the set of all measurable functions $ q(\cdot): \R^{n}\to(1,\infty)$ such that
$$1< q_{-}\le q(x) \le q_{+}<\infty,\ \ x\in \R^{n}.$$

\item[(v)]\  The set $\mathscr{B}(\R^{n})$ consists of all  measurable functions  $q(\cdot)\in\mathscr{P}(\R^{n})$ satisfying that the Hardy-Littlewood maximal operator $M$ is bounded on $L^{q(\cdot)}(\R^{n})$.

\end{list}
\end{definition}

\begin{definition} [$\log$-H\"{o}lder continuity] \label{def.log-holder} \ \
 \ Let $q(\cdot)$ be a real-valued function on  $\mathbb{R}^{n}$.

\begin{list}{}{}
\item[(i)]\    Denote by  $\mathscr{C}^{\log}_{loc}(\mathbb{R}^{n})$
the set of all local  $\log$-H\"{o}lder continuous functions $q(\cdotp)$ which satisfies
 \begin{equation*}    
 |q(x)-q(y)| \le \frac{-C}{\ln(|x-y|)},  \ \ \  |x-y|\le 1/2,\ x,y \in \R^{n},
\end{equation*}
where $C$ denotes a universal positive constant that may differ from line to line, and  $C$ does not depend on $x, y$.
%

\item[(ii)]\  The set $\mathscr{C}^{\log}_{\infty}(\R^{n})$ consists of all  $\log$-H\"{o}lder continuous functions $ q(\cdot)$ at infinity   satisfies
\begin{equation*}
 |q(x)-q_{\infty}| \le \frac{C_{\infty}}{\ln(\mathe+|x|)},  \ \ \ x \in \R^{n},
\end{equation*}
 where $q_{\infty}=\lim\limits_{|x|\to \infty}q(x)$.

\item[(iii)]\  Denote by  $\mathscr{C}^{\log}(\R^{n}):=\mathscr{C}^{\log}_{loc}(\R^{n})\cap \mathscr{C}^{\log}_{\infty}(\R^{n})$ the set of all global $\log$-H\"{o}lder continuous functions $ q(\cdot)$.

\end{list}

\end{definition}

\begin{remark}\quad The $\mathscr{C}^{\log}_{\infty}(\R^{n})$ condition is equivalent to the uniform continuity condition
\begin{equation*}  
  |q(x)-q(y)| \le \frac{C}{\ln(\mathe+|x|)},  \ \ \  |y|\ge|x|,\ x,y \in \R^{n}.
\end{equation*}
The $\mathscr{C}^{\log}_{\infty}(\R^{n})$ condition was originally defined in this form in \cite{cruz2003maximal}.
\end{remark}

\subsection{Auxiliary propositions  and lemmas }

 In this part we   state some auxiliary propositions  and lemmas which will be needed for proving  our main theorems. And we only describe partial results  we need.

 \begin{lemma}\quad \label{lem.variable-proposition-B}
Let $ p(\cdot)\in \mathscr{P}(\R^{n})$.
 \begin{list}{}{}
 \item[(1)]\ If $ p(\cdot)\in \mathscr{C}^{\log}(\R^{n})$, 
 then we have $ p(\cdot)\in \mathscr{B}(\R^{n})$.
\item[(2)] \cite[see Lemma 2.3 in ][]{cruz2014variable}
The  following conditions are equivalent:
\begin{enumerate}[label=(\roman*),itemindent=1em]  
 \item \   $ p(\cdot)\in \mathscr{B}(\R^{n})$,
 \item \   $p'(\cdot)\in \mathscr{B}(\R^{n})$.
  \item \   $ p(\cdot)/p_{0}\in \mathscr{B}(\R^{n})$ for some $1<p_{0}<p_{-}$,
 \item \   $ (p(\cdot)/p_{0})'\in \mathscr{B}(\R^{n})$ for some $1<p_{0}<p_{-}$.
\end{enumerate}
\end{list}
\end{lemma}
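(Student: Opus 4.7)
The plan is to prove Part (1) by Diening's cube-dichotomy argument based on the two log-Hölder conditions, and to derive Part (2) from three standard properties of the class $\mathscr{B}(\mathbb{R}^n)$: duality, left-openness, and rescaling. Part (2) is explicitly cited in the statement from \cite{cruz2014variable}, so the task there is essentially to recall the mechanism behind that citation.

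For Part (1), I would reduce the $L^{p(\cdot)}$-boundedness of $M$ to a pointwise estimate of the form
\[
(Mf(x))^{p(x)} \lesssim M\bigl(|f|^{p(\cdot)/p_0}\bigr)(x)^{p_0} + g(x)
\]
for some $1 < p_0 < p_-$ and a fixed integrable function $g$. Fix $f$ with modular at most $1$ and a cube $Q \ni x$. Two cases arise depending on the side length $l(Q)$. If $l(Q)\le 1$, the local log-Hölder condition yields $|Q|^{p(x)-p(y)} \lesssim 1$ uniformly for $x,y\in Q$, so the exponent behaves essentially like a constant $p_Q$ on $Q$ and Hölder's inequality delivers the desired bound. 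If $l(Q) > 1$, the log-Hölder condition at infinity controls $p(x)-p_\infty$ by a factor decaying like $1/\ln(\mathe+|x|)$, and splitting $f = f\chi_{\{|f|>1\}} + f\chi_{\{|f|\le 1\}}$ reduces matters once more to constant-exponent estimates (the large-values part producing the main term, the small-values part producing $g$). Combining the cases with the classical $L^{p_0}$ boundedness of $M$ yields the conclusion.

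For Part (2), the equivalences rest on three standard facts. First, duality: if $M$ is bounded on $L^{p(\cdot)}$, then $M$ is also bounded on $L^{p'(\cdot)}$, yielding (i) $\Leftrightarrow$ (ii). Second, Diening's left-openness of $\mathscr{B}(\mathbb{R}^n)$: if $p(\cdot)\in\mathscr{B}(\mathbb{R}^n)$, then there exists $p_0\in(1,p_-)$ with $p(\cdot)/p_0 \in\mathscr{B}(\mathbb{R}^n)$, which gives (i) $\Rightarrow$ (iii). Third, the converse (iii) $\Rightarrow$ (i) follows from the pointwise inequality $(Mf(x))^{p_0} \le M(|f|^{p_0})(x)$ applied with $|f|^{p_0}$ in place of $f$ and the exponent $p(\cdot)/p_0$. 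Finally, (iii) $\Leftrightarrow$ (iv) is just (i) $\Leftrightarrow$ (ii) applied to the exponent $p(\cdot)/p_0$.

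The main technical obstacle is the pointwise estimate in Part (1): the geometric dichotomy between small and large cubes and the precise interplay between the two log-Hölder conditions are the heart of the argument, and care is required to control the error term $g$ uniformly in $f$ via the modular bound. Part (2) is then essentially bookkeeping once the duality and self-improvement results behind \cite{cruz2014variable} are invoked.
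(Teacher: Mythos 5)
The paper does not prove this lemma; it states it as a known result, attributing Part~(1) to Cruz-Uribe--Fiorenza--Neugebauer \cite{cruz2003maximal} and Nekvinda \cite{nekvinda2004hardy}, and Part~(2) to Diening (Theorem~8.1 in \cite{diening2005maximalf}, equivalently Theorem~1.2 in \cite{cruz2006theboundedness}). Your outline correctly reconstructs the standard arguments behind those citations. For Part~(1), the pointwise bound of $(Mf(x))^{p(x)}$ by $M\bigl(|f|^{p(\cdot)/p_0}\bigr)(x)^{p_0}$ plus an integrable error, combined with the small-versus-large cube dichotomy (local log-H\"older control makes $|Q|^{p(x)-p(y)}$ uniformly bounded so the exponent can be frozen on small cubes; the condition at infinity together with splitting $f$ according to $|f|>1$ or $|f|\le 1$ handles large cubes), is precisely the Cruz-Uribe--Fiorenza--Neugebauer argument. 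For Part~(2), the elementary pieces, namely (iii) $\Rightarrow$ (i) via $(Mf)^{p_0}\le M(|f|^{p_0})$, and (iii) $\Leftrightarrow$ (iv) via applying (i) $\Leftrightarrow$ (ii) to the exponent $p(\cdot)/p_0$, are correct. One caution: the two remaining implications, duality (i) $\Leftrightarrow$ (ii) and left-openness (i) $\Rightarrow$ (iii), are not consequences of Part~(1) nor of elementary Banach-space duality between $L^{p(\cdot)}$ and $L^{p'(\cdot)}$; each is an independent and substantially deeper theorem of Diening. Your proposal correctly defers to the cited sources for these, which is also what the paper does, so the outline is sound provided one is explicit that duality and left-openness of the class $\mathscr{B}(\R^{n})$ enter as black boxes rather than being rederived.
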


 The  first part in   \cref{lem.variable-proposition-B} is independently due to Cruz-Uribe et al. \cite{cruz2003maximal} and to Nekvinda \cite{nekvinda2004hardy}  respectively. The second of  \cref{lem.variable-proposition-B}  belongs to  Diening \cite{diening2005maximalf}~(see Theorem 8.1 or Theorem 1.2 in  \cite{cruz2006theboundedness}).

\begin{remark} \label{rem.2}
\begin{enumerate}[leftmargin=2em,label=(\alph*),itemindent=1.5em]  
\item   Since
$$|q'(x)-q'(y)|\le \frac{|q(x)-q(y)|}{(q_{-}-1)^{2}},$$
 it follows at once that if $q(\cdot)\in \mathscr{C}^{\log}(\mathbb{R}^{n})$, then so does $q'(\cdot)$, that is, if the condition hold, then $M$ is bounded on
  $L^{q(\cdot)} (\mathbb{R}^{n})$ and $L^{q'(\cdot)} (\mathbb{R}^{n})$.  Furthermore, Diening has proved general results on Musielak-Orlicz spaces.
\item   When $p(\cdot)\in \mathscr{P}(\mathbb{R}^{n})$,  the assumption that $p(\cdot)\in \mathscr{C}^{\log}(\mathbb{R}^{n})$ is equivalent to assuming  $1/p(\cdot)\in \mathscr{C}^{\log}(\mathbb{R}^{n})$, since
$$\Big|\frac{p(x)-p(y)}{(p_{+})^{2}}\Big| \le \Big|\frac{1}{p(x)}-\frac{1}{p(y)} \Big|=\Big|\frac{p(x)-p(y)}{p(x)p(y)}\Big|\le \Big|\frac{p(x)-p(y)}{(p_{-})^{2}}\Big|.$$
\end{enumerate}
\end{remark}

As the classical Lebesgue norm, the (quasi-)norm of variable exponent Lebesgue space is also homogeneous in the
exponent. Precisely, we have the following result \cite[see Lemma 2.3 in ][]{cruz2014variable}.

\begin{lemma}  \label{lem.variable-homogeneous}
Given   $ p(\cdot)\in    \mathscr{P}_{0}(\R^{n})$, then for all   $s>0$, we have
\begin{align*}
 \||f|^{s}\|_{p(\cdot)}  &\le C  \|f\|_{sp(\cdot)}^{s}.
\end{align*}
\end{lemma}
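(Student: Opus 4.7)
The statement is a homogeneity-in-the-exponent identity for the Luxemburg norm, so the natural approach is to unwrap the definition of $\|\cdot\|_{p(\cdot)}$ and exploit the elementary pointwise identity $\big(|f(x)|/\lambda\big)^{sp(x)}=\big(|f(x)|^s/\lambda^s\big)^{p(x)}$. No deep machinery from harmonic analysis is needed, and in fact the sharp statement is an equality with $C=1$; the proposed argument will give this directly.

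\textbf{Step 1.} Fix $s>0$ and $f\in L^{sp(\cdot)}(\R^n)$ with $\|f\|_{sp(\cdot)}<\infty$. Pick any $\lambda>\|f\|_{sp(\cdot)}$. By the definition of the Luxemburg norm (\cref{def.2.1}(i)),
\begin{equation*}
\int_{\R^n}\!\left(\frac{|f(x)|}{\lambda}\right)^{\!sp(x)}\!dx\ \le\ 1.
\end{equation*}

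\textbf{Step 2.} Set $\mu:=\lambda^s$. The pointwise identity $(|f(x)|/\lambda)^{sp(x)}=(|f(x)|^s/\lambda^s)^{p(x)}=(|f(x)|^s/\mu)^{p(x)}$ (valid for every $x$ since $\lambda>0$ and $s>0$) rewrites the previous integral as
\begin{equation*}
\int_{\R^n}\!\left(\frac{|f(x)|^s}{\mu}\right)^{\!p(x)}\!dx\ \le\ 1,
\end{equation*}
which by definition means $\||f|^s\|_{p(\cdot)}\le\mu=\lambda^s$.

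\textbf{Step 3.} Taking the infimum over admissible $\lambda$ yields $\||f|^s\|_{p(\cdot)}\le\|f\|_{sp(\cdot)}^{\,s}$, which is the stated inequality (with $C=1$). For the converse direction, which is not required here but is obtained gratis, one runs the same argument backwards: given $\mu>\||f|^s\|_{p(\cdot)}$, set $\lambda:=\mu^{1/s}$ and read the identity the other way. The hypothesis $p(\cdot)\in\mathscr{P}_0(\R^n)$ is used only to guarantee that $sp(\cdot)\in\mathscr{P}_0(\R^n)$ as well, so that both norms make sense as (quasi-)norms.

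\textbf{Main obstacle.} There is essentially none: the result is a bookkeeping exercise with the Luxemburg functional. The only subtlety worth flagging is that for $p_-<1$ the functional $\|\cdot\|_{p(\cdot)}$ is only a quasi-norm, but the definition via $F_p(f/\eta)\le 1$ is still meaningful and the argument above does not use the triangle inequality, so it goes through unchanged; this is exactly why the lemma is formulated with the larger class $\mathscr{P}_0(\R^n)$ rather than $\mathscr{P}(\R^n)$.
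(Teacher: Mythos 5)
Your proof is correct; the paper itself gives no proof of this lemma, only a citation to Cruz-Uribe and Wang, where the same elementary scaling argument (the pointwise identity $\bigl(|f(x)|/\lambda\bigr)^{sp(x)}=\bigl(|f(x)|^{s}/\lambda^{s}\bigr)^{p(x)}$ applied under the modular) establishes the statement, in fact as an equality with $C=1$. Your remark that the argument never invokes the triangle inequality, and hence goes through when $p_{-}<1$ so that $\|\cdot\|_{p(\cdot)}$ is only a quasi-norm, is exactly right and is the reason the lemma is stated for $\mathscr{P}_{0}(\R^{n})$ rather than $\mathscr{P}(\R^{n})$.
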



The next lemma is known as  the generalized H\"{o}lder's inequality on Lebesgue spaces with
variable exponent, and  the proof can  also be found in \cite{kovavcik1991spaces} or \cite[P.27-30 in][]{cruz2013variable}.

\begin{lemma} [generalized H\"{o}lder's inequality]\label{lem.holder} \
\begin{list}{}{} 
\item[(1)]\cite[see P.81-82, Lemma 3.2.20 in ][]{diening2011lebesgue}\ \ Let $p(\cdotp),q(\cdotp),r(\cdotp)\in  \mathscr{P}_{0}(\mathbb{R}^{n})$ satisfy the condition
\[
\dfrac{1}{r(x)} = \dfrac{1}{p(x)} + \dfrac{1}{q(x)} \qquad \mbox{for a.e.} \ x\in \mathbb{R}^{n}.
\]
\begin{enumerate}[label=(\roman*),itemindent=1em]  
\item Then, for all $f \in L^{{p(\cdotp)}}(\mathbb{R}^{n})$ and $g\in L^{{q(\cdotp)}}(\mathbb{R}^{n})$, one has
\begin{align} \label{equ:holder-2}
\|fg\|_{r(\cdotp)} &\le C\|f\|_{p(\cdotp)}  \|g\|_{q(\cdotp)}.
\end{align}

\item   When $r=1$, then $p'(\cdotp) = q(\cdotp)$, hence, for all $f \in L^{{p(\cdotp)}}(\mathbb{R}^{n})$ and $g\in L^{{p'(\cdotp)}}(\mathbb{R}^{n})$, one has
\begin{align}\label{equ:holder-1}
\int_{\mathbb{R}^{n}}|fg|  &\le C\|f\|_{p(\cdotp)}  \|g\|_{p'(\cdotp)}.
\end{align}
\end{enumerate}
\item[(2)]The generalized H\"{o}lder's inequality in Orlicz space\cite[for  details and  the more general cases, ][]{perez2002sharp,perez1995endpoint,lerner2009new}.\ 
\begin{enumerate}[label=(\roman*),itemindent=1em]  
\item Let $r_1,\dots,r_m\ge 1$ with $\frac{1}{r}=\frac{1}{r_1}+\cdots+\frac{1}{r_m}$ and $Q$ be a cube in $\mathbb{R}^{n}$. Then
\begin{align*}
\frac{1}{|Q|}\int_{Q}|f_{1}(x)\cdots f_{m}(x)g(x)| dx &\le C\|f_{1}\|_{\exp L^{r_{1}},Q} \cdots \|f_{m}\|_{\exp L^{r_{m}},Q} \|g\|_{L(\log L)^{1/r},Q}.
\end{align*}

\item   Let $t\ge 1$, then
\begin{align}\label{equ:holder-4}
\frac{1}{|Q|}\int_{Q}|f(x)g(x)| dx &\le C\|f\|_{\exp L^{t},Q}   \|g\|_{L(\log L)^{1/t},Q}.
\end{align}
\end{enumerate}
\item[(3)]\cite[see Lemma 9.2 in ][]{lu2014multilinear}\ Let $q(\cdotp),q_{1}(\cdot),\dots, q_{m}(\cdot)\in  \mathscr{P}(\mathbb{R}^{n})$ satisfy the condition
\[
\dfrac{1}{q(x)} = \dfrac{1}{q_{1}(x)}+\cdots + \dfrac{1}{q_{m}(x)} \qquad \mbox{for a.e.} \ x\in \mathbb{R}^{n}.
\]
Then, for any $f_{j} \in L^{{q_{j}(\cdotp)}}(\mathbb{R}^{n})$ , $j=1,\dots,m$, one has
\begin{align*} 
\|f_{1}\cdots f_{m} \|_{q(\cdotp)} &\le C\|f_{1}\|_{q_{1}(\cdot)} \cdots \|f_{m}\|_{q_{m}(\cdot)}.
\end{align*}
\end{list}
\end{lemma}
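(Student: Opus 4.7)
The final statement gathers three H\"older-type inequalities from the literature; my plan is to sketch how each would be established from the underlying machinery rather than just appeal to the cited sources.

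For part (1)(ii), I would exploit the homogeneity of the Luxemburg norm to normalize so that $\|f\|_{p(\cdot)}=\|g\|_{p'(\cdot)}=1$. The definition of the norm then gives the modular bounds $\int_{\R^{n}}|f|^{p(x)}\,dx\le 1$ and $\int_{\R^{n}}|g|^{p'(x)}\,dx\le 1$, and the pointwise Young inequality $ab\le a^{p(x)}/p(x)+b^{p'(x)}/p'(x)$, applied with $a=|f(x)|$ and $b=|g(x)|$ and integrated, yields $\int_{\R^{n}}|fg|\le 1/p_{-}+1/(p')_{-}\le C$. Undoing the normalization produces \eqref{equ:holder-1}. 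For part (1)(i) I would reduce to (1)(ii): setting $\tilde p(x):=p(x)/r(x)$ and $\tilde q(x):=q(x)/r(x)$, one has $1/\tilde p(x)+1/\tilde q(x)=1$ a.e., so the case $r=1$ applied to $|f|^{r(\cdot)}$ and $|g|^{r(\cdot)}$, combined with the homogeneity \cref{lem.variable-homogeneous}, produces \eqref{equ:holder-2}.

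For part (2) I would work with the complementary Young pair of Orlicz functions $\Phi_{t}(s)=e^{s^{t}}-1$ (which generates $\exp L^{t}$) and $\Psi_{t}(s)=s(\log(\mathe+s))^{1/t}$ (which generates $L(\log L)^{1/t}$). The abstract H\"older inequality for Orlicz spaces applied on the cube $Q$ with this pair delivers \eqref{equ:holder-4}, and the $m$-linear statement (2)(i) follows by iterating the Orlicz H\"older inequality using $1/r=1/r_{1}+\cdots+1/r_{m}$. Part (3) is the $m$-linear analogue of (1)(i) and is proved by induction on $m$: the base case $m=2$ is (1)(i), while the inductive step splits $f_{1}\cdots f_{m}=(f_{1}\cdots f_{m-1})\cdot f_{m}$, applies (1)(i) with the exponent $p(x)$ defined by $1/p(x)=\sum_{j=1}^{m-1}1/q_{j}(x)$ and with $q_{m}(x)$, and then invokes the induction hypothesis on the first factor.

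The main obstacle I anticipate is bookkeeping rather than any deep analytic difficulty. The variable-exponent Young inequality replaces the sharp constant $1$ of the classical case by an $x$-dependent expression, so every estimate carries a multiplicative constant $C$ depending on $p_{-}$ and $p_{+}$; before invoking (1)(ii) in the reduction from (1)(i) one must verify pointwise a.e.\ the conjugate identity $1/\tilde p(x)+1/\tilde q(x)=1$ and confirm that the auxiliary exponents still lie in $\mathscr{P}_{0}(\R^{n})$ so that the Luxemburg norms are well defined. For part (2), analogous care is needed with the local Luxemburg norms $\|\cdot\|_{\exp L^{t},Q}$ and $\|\cdot\|_{L(\log L)^{1/t},Q}$ and with the complementarity of $\Phi_{t}$ and $\Psi_{t}$.
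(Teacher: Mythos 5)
The paper does not prove \cref{lem.holder}; it assembles these H\"older-type estimates from the cited references (Diening et al.\ for part (1), P\'erez et al.\ and Lerner et al.\ for part (2), Lu--Zhang for part (3)) without any internal argument, so there is no in-paper proof to compare against. Your sketch follows the standard route for each assertion and is essentially sound: the normalize-and-apply-pointwise-Young argument is the usual proof of \eqref{equ:holder-1}; the Orlicz complementary-pair machinery for $\exp L^{t}$ and $L(\log L)^{1/t}$ is the correct backbone for part (2), with iteration giving the $m$-linear form; and the bilinear-to-multilinear induction gives part (3).

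One small caveat in your reduction of item (1)(i) to item (1)(ii): \cref{lem.variable-homogeneous} is stated for a \emph{constant} power $s$, so it does not by itself yield $\||f|^{r(\cdot)}\|_{p(\cdot)/r(\cdot)} \le C\|f\|_{p(\cdot)}$ when $r(\cdot)$ is genuinely variable. The step is still correct, but it should be justified at the level of the modular rather than by appealing to that lemma: if $\|f\|_{p(\cdot)}\le 1$ then $\int_{\R^{n}}|f(x)|^{p(x)}\,dx\le 1$, and since $\bigl(|f(x)|^{r(x)}\bigr)^{p(x)/r(x)}=|f(x)|^{p(x)}$ pointwise, one obtains
\[
\int_{\R^{n}}\bigl(|f(x)|^{r(x)}\bigr)^{p(x)/r(x)}\,dx\le 1,
\]
hence $\||f|^{r(\cdot)}\|_{p(\cdot)/r(\cdot)}\le 1$ directly. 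Applying \eqref{equ:holder-1} to the pair $|f|^{r(\cdot)}$, $|g|^{r(\cdot)}$ with conjugate exponents $p(\cdot)/r(\cdot)$ and $q(\cdot)/r(\cdot)$ then bounds the modular $\int_{\R^{n}}|f(x)g(x)|^{r(x)}\,dx$ by a constant, and passing from a bounded modular back to a bounded Luxemburg norm costs only a factor depending on $r_{-}$. With that adjustment your sketch is complete.
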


\begin{lemma}\cite[see Corollary 2.1  in ][]{huang2010multilinear} \label{lem.CZO-bound}
Let $T$ be a 2-linear Calder\'{o}n-Zygmund operator. Suppose that $ p_{_{1}}(\cdot),~p_{_{2}}(\cdot) \in \mathscr{B}(\R^{n})$. If  $ p(\cdot)\in    \mathscr{P}_{0}(\R^{n})$ such that there exists $p_{*}\in (0,p_{-})$ with $(p(\cdot)/p_{*})' \in \mathscr{B}(\R^{n})$ and $\frac{1}{p(x)} = \frac{1}{p_{1}(x)} + \frac{1}{p_{2}(x)} $, then there exists a constant $C$ independent of functions $f_{i} \in L^{{p_{i}(\cdot)}}(\mathbb{R}^{n})$ for $i = 1, 2$ such that
\begin{align*} 
\|T(f_{1},f_{2}) \|_{p(\cdot)} &\le C\|f_{1}\|_{p_{1}(\cdot)} \|f_{2}\|_{p_{2}(\cdot)}.
\end{align*}
\end{lemma}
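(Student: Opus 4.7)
My plan is to prove Lemma~\ref{lem.CZO-bound} by a sharp maximal function argument combined with a Fefferman--Stein type inequality on variable Lebesgue spaces, using the generalized H\"{o}lder inequality (Lemma~\ref{lem.holder}) and the $L^{p(\cdot)}$-boundedness of $M$ encoded by $\mathscr{B}(\mathbb{R}^n)$.

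The first step is to establish the key pointwise control: for any $0<\delta<1/2$,
\[
M^{\#}_{\delta}\bigl(T(f_{1},f_{2})\bigr)(x)
:= \bigl(M^{\#}(|T(f_{1},f_{2})|^{\delta})(x)\bigr)^{1/\delta}
\le C\,Mf_{1}(x)\,Mf_{2}(x),
\]
which is a standard consequence of Kolmogorov's inequality and the weak $(1,1,1/2)$ boundedness of the bilinear Calder\'{o}n--Zygmund operator $T$ (the bilinear analogue of the classical estimate $M^{\#}_{\delta}(Tf)\lesssim Mf$). One splits $f_{i}=f_{i}\chi_{_{8Q}}+f_{i}\chi_{_{(8Q)^{c}}}$ around each cube $Q\ni x$, applies the weak endpoint bound to the main term, and uses the size estimate \eqref{equ:CZK-1} to control the remote pieces by $Mf_{1}(x)Mf_{2}(x)$.

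The second step is to transfer this pointwise inequality to $L^{p(\cdot)}$. Pick $p_{*}\in(0,p_{-})$ with $0<p_{*}<1/2$ as in the hypothesis and write
\[
\|T(f_{1},f_{2})\|_{p(\cdot)}
= \bigl\||T(f_{1},f_{2})|^{p_{*}}\bigr\|_{p(\cdot)/p_{*}}^{1/p_{*}}
\]
by Lemma~\ref{lem.variable-homogeneous}. Since $(p(\cdot)/p_{*})'\in\mathscr{B}(\mathbb{R}^{n})$, Lemma~\ref{lem.variable-proposition-B}(2) also yields $p(\cdot)/p_{*}\in\mathscr{B}(\mathbb{R}^{n})$, which is precisely the condition needed to invoke the Fefferman--Stein inequality on variable Lebesgue spaces, namely $\|g\|_{q(\cdot)}\le C\|M^{\#}g\|_{q(\cdot)}$ with $q(\cdot)=p(\cdot)/p_{*}$. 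Applying this to $g=|T(f_{1},f_{2})|^{p_{*}}$ and combining with the pointwise estimate of the first step gives
\[
\|T(f_{1},f_{2})\|_{p(\cdot)}
\le C\,\|Mf_{1}\cdot Mf_{2}\|_{p(\cdot)}.
\]
The generalized H\"{o}lder inequality \eqref{equ:holder-2} on variable exponent spaces (Lemma~\ref{lem.holder}(1)) together with $p_{1}(\cdot),p_{2}(\cdot)\in\mathscr{B}(\mathbb{R}^{n})$ then yields
\[
\|Mf_{1}\cdot Mf_{2}\|_{p(\cdot)}
\le C\,\|Mf_{1}\|_{p_{1}(\cdot)}\,\|Mf_{2}\|_{p_{2}(\cdot)}
\le C\,\|f_{1}\|_{p_{1}(\cdot)}\,\|f_{2}\|_{p_{2}(\cdot)},
\]
which closes the proof.

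The main obstacle is the Fefferman--Stein $M^{\#}$-inequality on $L^{p(\cdot)}$, which is not available for arbitrary $p(\cdot)\in\mathscr{P}_{0}$; it is exactly the role of the auxiliary exponent $p_{*}$ and the technical hypothesis $(p(\cdot)/p_{*})'\in\mathscr{B}(\mathbb{R}^{n})$ to reduce us to an exponent $p(\cdot)/p_{*}$ in $\mathscr{P}(\mathbb{R}^{n})\cap\mathscr{B}(\mathbb{R}^{n})$ where the classical good-$\lambda$ argument transfers. Once this reduction is in place, the rest is a direct assembly of the three ingredients above; no new regularity assumption on the variable exponents beyond those already built into $\mathscr{B}(\mathbb{R}^{n})$ is required.
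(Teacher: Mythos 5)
Since the paper does not prove this lemma itself but simply cites it as Corollary 2.1 of \cite{huang2010multilinear}, the comparison is against that reference. Your argument is correct and is exactly the standard mechanism behind the cited result: the pointwise estimate $M^{\#}_{\delta}(T(f_{1},f_{2}))\lesssim Mf_{1}\cdot Mf_{2}$ from Lerner et al.\ \cite{lerner2009new}, the variable-exponent Fefferman--Stein inequality on $L^{p(\cdot)/p_{*}}(\mathbb{R}^{n})$, and the generalized H\"{o}lder inequality combined with boundedness of $M$ on each $L^{p_{i}(\cdot)}(\mathbb{R}^{n})$.

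One detail you elide needs a sentence of justification: the hypothesis only furnishes some $p_{*}\in(0,p_{-})$ with $(p(\cdot)/p_{*})'\in\mathscr{B}(\mathbb{R}^{n})$, whereas the bilinear sharp-function bound forces $\delta<1/m=1/2$, so you cannot simply ``pick $p_{*}<1/2$ as in the hypothesis.'' You should note that $\mathscr{B}(\mathbb{R}^{n})$ is stable under replacing an exponent $q(\cdot)$ by $sq(\cdot)$ for $s\ge 1$ --- this follows from item (iii) of Lemma~\ref{lem.variable-proposition-B}(2), since $q/q_{0}\in\mathscr{B}(\mathbb{R}^{n})$ for some $1<q_{0}<q_{-}$ gives $(sq)/(sq_{0})\in\mathscr{B}(\mathbb{R}^{n})$ with $1<sq_{0}<(sq)_{-}$ --- and hence $p_{*}$ may always be shrunk below $1/2$ while preserving $(p(\cdot)/p_{*})'\in\mathscr{B}(\mathbb{R}^{n})$. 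With that filled in, the proof is complete and matches the approach of the reference the paper cites.
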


The following results are also needed. 

\begin{lemma}[Norms of characteristic functions]\label{lem.character-norm}
\ \ 
\begin{enumerate}[ label=(\arabic*),itemindent=1em] 
\item \ Let $q(\cdot)\in  \mathscr{C}^{\log}(\R^{n}) \cap\mathscr{P}(\R^{n})$ and $q(x)\le q_{\infty}$ for a.e. $x\in \mathbb{R}^{n}$. Then there exists a positive constant $C$ such that the inequality
\begin{align*}
\|\chi_{Q}\|_{q(\cdot)}    & \le C |Q|^{1/q(x)}  
\end{align*}
 holds for every cube $Q\subset \mathbb{R}^{n}$ and a.e. $x\in Q$ (see Lemma 4.4 in \cite[][]{pradolini2017characterization}, or P.126, Corollary 4.5.9 in   \cite[][]{diening2011lebesgue} ).
    \label{enumerate:charac-norm-1}
\item\ Let $q(\cdotp)\in  \mathscr{P}(\mathbb{R}^{n})$. $\frac{1}{q_{_{Q}}} = \frac{1}{|Q|} \dint_{Q} \frac{1}{q(y)} dy$ is the harmonic mean of $q_{_{Q}}$. Then the following conditions are equivalent (see  Theorem 4.5.7 in \cite[][]{diening2011lebesgue} or Proposition 4.66 in \cite{cruz2013variable}).
\begin{enumerate}[label=(\roman*),itemindent=1em]  
\item $\|\chi_{Q}\|_{q(\cdotp)} \|\chi_{Q}\|_{q'(\cdotp)}\approx |Q|$ uniformly for all cubes $Q\subset\mathbb{R}^{n}$.
\item  $\|\chi_{Q}\|_{q(\cdotp)} \approx |Q|^{^{\frac{1}{q_{Q}}}}$ and $\|\chi_{Q}\|_{q'(\cdotp)}\approx |Q|^{^{\frac{1}{q'_{Q}}}} $ uniformly for all cubes $Q\subset\mathbb{R}^{n}$.
\end{enumerate}
    \label{enumerate:charac-norm-2}
\item
    Let $q_{+}<\infty$ . Then the following conditions are equivalent (see P.101, Lemma 4.1.6 in  \cite[][]{diening2011lebesgue}, or   Lemma 4.2 in \cite{pradolini2017characterization}, or Corollary 3.24 in  \cite{cruz2013variable}, or \cite{cruz2012weighted,harjulehto2004variable}).
\begin{enumerate} [itemindent=1em]
\item   The function $q(\cdot)\in \mathscr{C}^{\log}_{0}(\mathbb{R}^{n})$ .
\item  For every cube $Q\subset\mathbb{R}^{n}$, there exists a positive constant $C$ such that
$$ |Q|^{q_{-}(Q)  - q_{+}(Q) } \le C.$$
\item  For all cube $Q\subset\mathbb{R}^{n}$ and all $x\in Q$, there exists a positive constant $C$ such that
$$ |Q|^{q_{-}(Q)  - q(x) } \le C.$$
\item  For all cube $Q\subset\mathbb{R}^{n}$ and all $x\in Q$, there exists a positive constant $C$ such that
$$ |Q|^{ q(x)- q_{+}(Q) } \le C.$$
\end{enumerate}
    \label{enumerate:charac-norm-3}
\item  
Let $q(\cdot)\in \mathscr{C}^{\log}_{\infty}(\mathbb{R}^{n})$ . Then $\|\chi_{Q}\|_{q(\cdotp)} \approx |Q|^{1/q_{\infty}}$ for every cube $Q\subset\mathbb{R}^{n}$ with diameter $r_{Q}\ge 1/4$ (a particular case of Lemma 3.6 in \cite{cruz2012weighted}, see also \cite{pradolini2017characterization}). 
    \label{enumerate:charac-norm-4}
\item    Let $q(\cdot)\in \mathscr{C}^{\log}(\R^{n}) \cap\mathscr{P}(\R^{n})$ . Then $\|\chi_{Q}\|_{q(\cdotp)} \approx |Q|^{\frac{1}{q_{Q}}}$ for every cube (or ball) $Q\subset\mathbb{R}^{n}$.  More concretely,
\begin{equation*}
\|\chi_{Q}\|_{q(\cdotp)} \approx
\left\{ \begin{aligned}
        |Q|^{\frac{1}{q(x)}} & \ \ \hbox{if } |Q|\le  2^{n} ~\hbox{and}~ x\in Q \\
        |Q|^{\frac{1}{q_{\infty}}} &  \ \ \hbox{if }|Q|\ge 1
                          \end{aligned} \right.
                          \end{equation*}
for every cube (or ball)   $Q\subset\mathbb{R}^{n}$  \cite[see   Corollary 4.5.9 and Lemma 7.3.19 in ][]{diening2011lebesgue}.
    \label{enumerate:charac-norm-5}
\item    Given a cube  $Q=Q(x_{0},r)$, with center in $x_{0}$ and diameter $r$.
\begin{enumerate} [itemindent=1em]
\item  If $r<1$, there exist two positive constants $a_{1}$ and $a_{2}$ such that
\begin{align*}
a_{1} |Q|^{1/q_{-}(Q)  } \le \|\chi_{Q}\|_{q(\cdot)}    & \le a_{2} |Q|^{1/q_{+}(Q) };  
\end{align*}
\item  If $r>1$, there exist two positive constants $c_{1}$ and $c_{2}$ such that
\begin{align*}
c_{1} |Q|^{1/q_{+}(Q) } \le \|\chi_{Q}\|_{q(\cdot)}    & \le c_{2} |Q|^{1/q_{-}(Q)  }.  
\end{align*}
\end{enumerate}
Therefore, $\|\chi_{Q}\|_{q(\cdot)} \le \max\{|Q|^{1/q_{+}(Q) }, |Q|^{1/q_{-}(Q)  }\}$ (see  P.25-26, Corollary 2.23 in  \cite[][]{cruz2013variable}, or \cite{fan2001spaces,pradolini2017characterization, diening2011lebesgue}).  
    \label{enumerate:charac-norm-6}
\item \   If  $p(\cdot)\in  \mathscr{C}^{\log}(\R^{n}) \cap\mathscr{P}(\R^{n})$  and $\beta=n/\alpha$ with $p_{+}< \frac{n\beta}{(n-\beta)^{+}}$, then there exists a number $a>1$ such that
\begin{equation}  \label{equ:character-norm-est}
  \|\chi_{Q(x,ar)}\|_{p'(\cdot)} \le \frac{a^{n-n/\beta+1}}{2} \|\chi_{Q(x,r)}\|_{p'(\cdot)}
\end{equation}
for every $r>0$ and $x\in \mathbb{R}^{n}$, where $Q(x,r)$ denotes a cube centered at $x$ and with diameter $r$ (It is easy to check that the result above can be obtained for every $\beta>1$, for more information, see Lemma 2.17 in \cite{ramseyer2013lipschitz} or \cite{pradolini2017characterization}).
    \label{enumerate:charac-norm-7}
\end{enumerate}      
\end{lemma}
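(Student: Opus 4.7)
The plan is to establish the seven items of the lemma through a unified reduction to the Luxemburg-norm identity: for any cube $Q$ with finite measure, $\|\chi_Q\|_{q(\cdot)}$ is the unique $\lambda>0$ for which the modular $F_q(\chi_Q/\lambda) = \int_Q \lambda^{-q(x)}\,dx$ equals $1$. Every assertion in the lemma then reduces to estimating this modular integral, with the two kinds of log-Hölder continuity feeding in at different scales. Concretely, local log-Hölder forces the fundamental oscillation bound $|Q|^{q_+(Q)-q_-(Q)}\le C$ for bounded cubes, while log-Hölder at infinity forces $|Q|^{1/q(x)-1/q_\infty}\approx 1$ uniformly in $x\in Q$ when $r_Q\ge 1/4$. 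These two controls, applied in the correct regime, are essentially the only analytic inputs used.

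For the small-cube estimates in (1), (3), (5) and (6), I would exploit the oscillation bound to conclude $|Q|^{1/q(x)}\approx|Q|^{1/q_-(Q)}\approx|Q|^{1/q_+(Q)}$ pointwise on small cubes. Direct modular computation then yields (1) and the bounds in (6). The equivalence chain (3) is Diening's pointwise-to-integral characterization of $\mathscr{C}^{\log}_0$: the direction (i)$\Rightarrow$(ii)–(iv) is immediate from $q_+(Q)-q_-(Q)\le -C/\ln|Q|$, and the converse implications follow by specializing to shrinking concentric cubes about any Lebesgue point of $q$. For the large-cube estimate in (4) and the second branch of (5), the log-Hölder condition at infinity makes $\lambda^{-q(x)}$ comparable to $\lambda^{-q_\infty}$ on $Q$, so the modular equals $\lambda^{-q_\infty}|Q|$ up to a constant, giving $\|\chi_Q\|_{q(\cdot)}\approx|Q|^{1/q_\infty}$. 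Glueing the two regimes produces the dichotomous formula (5).

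Part (2) is the deepest equivalence and deserves separate attention. One direction is immediate from the generalized Hölder inequality \eqref{equ:holder-1}, which gives the lower bound $|Q|\lesssim \|\chi_Q\|_{q(\cdot)}\|\chi_Q\|_{q'(\cdot)}$ for free. The matching upper bound, as well as the passage to the harmonic mean $q_Q$, is the content of Diening's decomposition: using the convexity of $t\mapsto t^{q(x)}$ together with Jensen's inequality applied to $1/q(\cdot)$ averaged over $Q$ produces $|Q|^{1/q_Q}$ as the natural exponent for the norm, and the equivalence of (i) and (ii) follows. This is also what links (2) back to (5), since $q_Q\to q(x_0)$ as the cube shrinks around $x_0$ and $q_Q\to q_\infty$ as $r_Q\to\infty$ under log-Hölder continuity at infinity.

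The main obstacle will be the two-sided bound in (2), because it requires bootstrapping the crude one-sided Hölder estimate into a sharp harmonic-mean comparison — this is where the full strength of $q(\cdot)\in\mathscr{C}^{\log}(\R^n)$ is used, through the ``key estimate'' $\fint_Q|Q|^{1/q(x)-1/q_Q}\,dx\lesssim 1$. Once (2) and (5) are in hand, the remaining items (6) and (7) are formalities: (6) follows from monotonicity of the modular in the exponent, applied with the constant exponents $q_-(Q)$ and $q_+(Q)$ separately in the two size regimes $r<1$ and $r>1$; and (7) is a direct consequence of (5) (or of (6)) applied to the two concentric cubes $Q(x,r)$ and $Q(x,ar)$, with the dilation factor $a$ chosen large enough to absorb the implicit constants in the equivalence $\|\chi_Q\|_{p'(\cdot)}\approx|Q|^{1/p'_\infty}$ or $|Q|^{1/p'(x)}$.
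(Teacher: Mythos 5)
The paper itself offers no proof of this lemma: it is a compendium of results quoted from the literature (Diening--Harjulehto--H\"ast\"o--R\r{u}\u{z}i\u{c}ka, Cruz-Uribe--Fiorenza, Cruz-Uribe--Fiorenza--Neugebauer, Ramseyer--Salinas--Viviani, Pradolini--Ramos), so your sketch can only be measured against those sources, and it has two genuine gaps. The first is in item (4) and the large-volume branch of item (5): you assert that the $\mathscr{C}^{\log}_{\infty}$ condition makes $\lambda^{-q(x)}$ pointwise comparable to $\lambda^{-q_{\infty}}$ on $Q$ once $r_{Q}\ge 1/4$. This is false: the decay condition only gives $|q(x)-q_{\infty}|\le C_{\infty}/\ln(\mathrm{e}+|x|)$, so on a large cube meeting a fixed neighbourhood of the origin the exponent may differ from $q_{\infty}$ by a fixed amount there, and with $\lambda\approx|Q|^{1/q_{\infty}}$ the pointwise ratio $\lambda^{\,q_{\infty}-q(x)}$ grows like a positive power of $|Q|$. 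Only the averaged statement is true, and proving it is the actual content of the cited Lemma 3.6 of Cruz-Uribe--Fiorenza--Neugebauer: one must split $Q$ into the set $\{x:|q(x)-q_{\infty}|\ge\varepsilon\}$, whose measure is bounded independently of $Q$, and its complement (or use the $(\mathrm{e}+|x|)^{-N}$ integral estimate) before the modular $\int_{Q}\lambda^{-q(x)}\,dx$ can be compared with $\lambda^{-q_{\infty}}|Q|$. As written this step fails, and the repair is precisely the missing argument.

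The second gap is item (2), which is an equivalence for an arbitrary exponent $q(\cdot)\in\mathscr{P}(\R^{n})$ with no regularity hypothesis: conditions (i) and (ii) may both fail, and the claim is only that each implies the other. Your outline imports the estimate $\frac{1}{|Q|}\int_{Q}|Q|^{1/q(x)-1/q_{Q}}\,dx\le C$, which is a consequence of $q(\cdot)\in\mathscr{C}^{\log}$ and belongs to the proof of item (5), not of item (2); and the Jensen step you invoke does not produce the harmonic mean: convexity of $t\mapsto\lambda^{-t}$ yields $\|\chi_{Q}\|_{q(\cdot)}\ge|Q|^{1/\overline{q}_{Q}}$ with the arithmetic mean $\overline{q}_{Q}$, while $u\mapsto\lambda^{-1/u}$ is not convex, so no direct application of Jensen gives the exponent $1/q_{Q}$. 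Moreover, the direction you call immediate from H\"older is not a direction of the equivalence at all ($|Q|\le C\|\chi_{Q}\|_{q(\cdot)}\|\chi_{Q}\|_{q'(\cdot)}$ holds unconditionally); the trivial direction is (ii)$\Rightarrow$(i), via $1/q_{Q}+1/q'_{Q}=1$, and the substantive direction (i)$\Rightarrow$(ii) needs the duality argument of Theorem 4.5.7 in Diening et al., which your plan does not supply. A smaller omission: item (1) is claimed for all cubes, but you treat only small ones; the hypothesis $q(x)\le q_{\infty}$, which you never use, is exactly what handles $|Q|\ge 1$ via $|Q|^{1/q_{\infty}}\le|Q|^{1/q(x)}$. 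Items (3), (6) and (7) are reduced correctly, modulo routine detail.
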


%

 Let  $k_{0}\in \mathbb{N}$ satisfy that
  \begin{align}\label{equ.double-condition-2}
 a^{k_{0}-1}  &<2<a^{k_{0}},
\end{align}
where  $a$ is given in  condition \eqref{equ:character-norm-est}, and let $\epsilon_{0}=1/k_{0}$  (which will be used in the following sections).

Note that, if  $p(\cdot)\in  \mathscr{C}^{\log}(\R^{n}) \cap\mathscr{P}(\R^{n})$, the estimates (\ref{equ:character-norm-est}) and  (\ref{equ.double-condition-2}) imply the doubling condition  for the functional $a(Q):= \|\chi_{Q}\|_{p(\cdot)}$, that is
\begin{align*}
\|\chi_{2Q}\|_{p(\cdot)}    & \le C \|\chi_{Q}\|_{p(\cdot)}   
\end{align*}
 for every cube $Q\subset \mathbb{R}^{n}$.

%


Set $0<\gamma<n$ and $p(\cdot),q(\cdot)\in  \mathscr{P}(\R^{n})$ such that $1/q(x)=1/p(x) - \gamma/n$ with $p_{+}<n/\gamma$. Then a weight
$\omega\in    \mathcal{A}_{p(\cdot),q(\cdot) }^{\gamma}(\mathbb{R}^{n})$ if there exists a positive constant $C$ such that for every cube $Q$, the inequality
\begin{equation}  \label{equ:weight-apq}
  \|\omega\chi_{Q}\|_{q(\cdot)}  \|\omega^{-1}\chi_{Q}\|_{p'(\cdot)}   \le C|Q|^{1-\gamma/n}
\end{equation}
holds.

When $ \gamma=0$, the inequality above is the $ \mathcal{A}_{p(\cdot) }(\mathbb{R}^{n})$ class given by Cruz-Uribe, Diening and H\"{a}st\"{o} in \cite{cruz2011maximal}, that characterizes the boundedness of the Hardy-Littlewood maximal operator on $L_{\omega}^{p(\cdot)}(\mathbb{R}^{n})$,
 that is, the measurable functions $f$ such that $f\omega \in L^{p(\cdot)}(\mathbb{R}^{n})$.

 The following result was proved in \cite{bernardis2014generalized} and gives a relation between the  $ \mathcal{A}_{p(\cdot) }(\mathbb{R}^{n})$  and the $ \mathcal{A}_{p(\cdot),q(\cdot) }^{\gamma}(\mathbb{R}^{n})$  classes (see also Lemma 4.14 in  \cite{pradolini2017characterization}).

\begin{lemma} \cite[see Lemma 4.1 in ][]{bernardis2014generalized}\label{lem.weight-relation}
Let $0<\gamma<n$ and $\omega$ be a weight. Set $p(\cdot), q(\cdot), s(\cdot)\in   \mathscr{P}(\R^{n})$ such that $1/q(x)=1/p(x) - \gamma/n$ and $s(x)=(1-\gamma/n)q(x)$ with $p_{+}<n/\gamma$. Then $\omega\in    \mathcal{A}_{p(\cdot),q(\cdot) }^{\gamma}(\mathbb{R}^{n})$ if and only if $\omega^{\frac{n}{n-\gamma}}\in    \mathcal{A}_{s(\cdot)}(\mathbb{R}^{n})$ .
\end{lemma}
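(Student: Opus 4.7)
The plan is to reduce the equivalence to the homogeneity identity for variable Lebesgue norms (\cref{lem.variable-homogeneous}) together with an algebraic check that the dual exponents match correctly. Writing $t:=\frac{n}{n-\gamma}$, the hypothesis $s(x)=(1-\gamma/n)q(x)$ is precisely $q(\cdot)=t\,s(\cdot)$, so \cref{lem.variable-homogeneous} gives
\begin{equation*}
\bigl\|\omega^{t}\chi_{Q}\bigr\|_{s(\cdot)}
=\bigl\|(\omega\chi_{Q})^{t}\bigr\|_{s(\cdot)}
\approx \bigl\|\omega\chi_{Q}\bigr\|_{ts(\cdot)}^{\,t}
=\bigl\|\omega\chi_{Q}\bigr\|_{q(\cdot)}^{\,t}.
\end{equation*}
Thus the first factor of the $\mathcal{A}_{s(\cdot)}$--type product is, up to constants, the $t$--th power of the first factor of the $\mathcal{A}_{p(\cdot),q(\cdot)}^{\gamma}$--type product.

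Next I would verify the key algebraic identity $p'(x)=t\,s'(x)$. From $1/p(x)=1/q(x)+\gamma/n$ one computes $p(x)=\frac{nq(x)}{n+\gamma q(x)}$, hence
\begin{equation*}
p'(x)=\frac{p(x)}{p(x)-1}=\frac{nq(x)}{(n-\gamma)q(x)-n},
\end{equation*}
while from $s(x)=\frac{n-\gamma}{n}q(x)$ one obtains
\begin{equation*}
s'(x)=\frac{s(x)}{s(x)-1}=\frac{(n-\gamma)q(x)}{(n-\gamma)q(x)-n},
\end{equation*}
and the ratio is exactly $t=\frac{n}{n-\gamma}$. Consequently, applying \cref{lem.variable-homogeneous} a second time,
\begin{equation*}
\bigl\|\omega^{-t}\chi_{Q}\bigr\|_{s'(\cdot)}
\approx \bigl\|\omega^{-1}\chi_{Q}\bigr\|_{ts'(\cdot)}^{\,t}
=\bigl\|\omega^{-1}\chi_{Q}\bigr\|_{p'(\cdot)}^{\,t}.
\end{equation*}

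The equivalence now follows by multiplying the two displays and taking the $1/t=(n-\gamma)/n$ power: the condition
$\|\omega^{t}\chi_{Q}\|_{s(\cdot)}\,\|\omega^{-t}\chi_{Q}\|_{s'(\cdot)}\le C|Q|$
(which is the $\mathcal{A}_{s(\cdot)}$ condition for the weight $\omega^{t}$) becomes, after raising to the power $1/t$,
$\|\omega\chi_{Q}\|_{q(\cdot)}\,\|\omega^{-1}\chi_{Q}\|_{p'(\cdot)}\le C|Q|^{(n-\gamma)/n}=C|Q|^{1-\gamma/n}$,
which is precisely the $\mathcal{A}_{p(\cdot),q(\cdot)}^{\gamma}$ condition \eqref{equ:weight-apq}. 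The reverse implication is obtained by running the same chain in the opposite direction.

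The only genuine obstacle is bookkeeping: one needs to confirm that the constants arising from \cref{lem.variable-homogeneous} are uniform in the cube $Q$ (they are, since the lemma is stated for the norm rather than the modular), and to justify that $s(\cdot), s'(\cdot)\in \mathscr{P}_{0}(\R^{n})$ so that \cref{lem.variable-homogeneous} applies — here the assumption $p_{+}<n/\gamma$ guarantees $q_{+}<\infty$ and $s_{-}>1$, which in turn ensures $s'(\cdot)$ is well defined and finite. Once these technicalities are checked, the whole argument is essentially the scalar scaling identity $(A\cdot B)^{t}=A^{t}B^{t}$ transported to the variable Lebesgue setting.
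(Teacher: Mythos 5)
Your proof is correct. The paper gives no proof of this lemma at all — it is cited verbatim from Lemma~4.1 of \cite{bernardis2014generalized} — so there is no in-paper argument to compare against; your reduction to the scaling identity for the variable-exponent norm is the natural (and, I believe, the standard) one. Two minor remarks: first, the identity $\||f|^t\|_{s(\cdot)}=\|f\|_{ts(\cdot)}^t$ is in fact an \emph{exact} equality for $t>0$ (substitute $\lambda=\mu^t$ in the modular), so the two-sided ``$\approx$'' you invoke is legitimate even though \cref{lem.variable-homogeneous} as recorded only states the $\le$ direction; second, your algebraic verification $p'(x)=t\,s'(x)$ with $t=n/(n-\gamma)$ is the genuine content, and it checks out, as does your observation that $p_{+}<n/\gamma$ together with $s(\cdot)\in\mathscr{P}(\R^{n})$ makes all the conjugate exponents well defined.
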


Note that if $q(\cdot)\in  \mathscr{C}^{\log}(\R^{n}) \cap\mathscr{P}(\R^{n})$, then $s(\cdot)\in  \mathscr{C}^{\log}(\R^{n})\cap\mathscr{P}(\R^{n})$. Since $M$ is continuous on $L^{s(\cdot)}(\mathbb{R}^{n})$, thus Pradolini and Ramos obtain the following lemma\cite{pradolini2017characterization}.

\begin{lemma} \cite[Lemma 4.15 in ][]{pradolini2017characterization}\label{lem.weight-apq-1}
Let $0\le\gamma<n$, $p(\cdot)\in  \mathscr{C}^{\log}(\R^{n}) \cap\mathscr{P}(\R^{n})$ and $1/q(x)=1/p(x) - \gamma/n$. Then $1\in    \mathcal{A}_{p(\cdot),q(\cdot) }^{\gamma}(\mathbb{R}^{n})$.
\end{lemma}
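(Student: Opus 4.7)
The plan is to verify the testing condition (\ref{equ:weight-apq}) for $\omega\equiv 1$, namely
\[
\|\chi_Q\|_{q(\cdot)}\,\|\chi_Q\|_{p'(\cdot)} \le C\,|Q|^{1-\gamma/n}
\]
for every cube $Q\subset \mathbb{R}^n$, and in fact to show this as an equivalence by direct computation of the two norms of characteristic functions.

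First I would check that all the relevant exponents inherit the $\log$-H\"older regularity. Since $p(\cdot)\in \mathscr{C}^{\log}(\mathbb{R}^{n})\cap \mathscr{P}(\mathbb{R}^{n})$, \cref{rem.2}(a) gives $p'(\cdot)\in \mathscr{C}^{\log}(\mathbb{R}^{n})\cap\mathscr{P}(\mathbb{R}^{n})$; and because $q(\cdot)$ is defined by $1/q(\cdot)=1/p(\cdot)-\gamma/n$ (with $p_+<n/\gamma$ when $\gamma>0$ so that $q(\cdot)\in\mathscr{P}(\mathbb{R}^{n})$), the $\log$-H\"older modulus of continuity of $1/q(\cdot)$ is the same as that of $1/p(\cdot)$; combined with \cref{rem.2}(b) one obtains $q(\cdot)\in \mathscr{C}^{\log}(\mathbb{R}^{n})\cap\mathscr{P}(\mathbb{R}^{n})$. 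Therefore \cref{lem.character-norm}\ref{enumerate:charac-norm-5} applies to both $q(\cdot)$ and $p'(\cdot)$, yielding
\[
\|\chi_Q\|_{q(\cdot)} \approx |Q|^{1/q_Q}, \qquad \|\chi_Q\|_{p'(\cdot)} \approx |Q|^{1/(p')_Q},
\]
uniformly in $Q$, where the harmonic means are $1/q_Q=\frac{1}{|Q|}\int_Q 1/q(y)\,dy$ and $1/(p')_Q=\frac{1}{|Q|}\int_Q 1/p'(y)\,dy$.

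The key arithmetic step is then trivial: by linearity of averaging,
\[
\frac{1}{q_Q}+\frac{1}{(p')_Q}
=\frac{1}{|Q|}\int_Q\!\Bigl(\frac{1}{p(y)}-\frac{\gamma}{n}+1-\frac{1}{p(y)}\Bigr)dy
=1-\frac{\gamma}{n}.
\]
Multiplying the two norm equivalences above gives $\|\chi_Q\|_{q(\cdot)}\|\chi_Q\|_{p'(\cdot)}\approx|Q|^{1-\gamma/n}$, which is exactly (\ref{equ:weight-apq}) with $\omega\equiv 1$. As a sanity check on the endpoint $\gamma=0$, the statement collapses to $\|\chi_Q\|_{p(\cdot)}\|\chi_Q\|_{p'(\cdot)}\approx |Q|$, which is \cref{lem.character-norm}\ref{enumerate:charac-norm-2}.

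\textbf{Alternative route and main obstacle.} One could instead invoke \cref{lem.weight-relation}: since $\omega\equiv 1$ gives $\omega^{n/(n-\gamma)}\equiv 1$, it suffices to show $1\in \mathcal{A}_{s(\cdot)}(\mathbb{R}^n)$ with $s(\cdot)=(1-\gamma/n)q(\cdot)$. A direct calculation shows $s_->1$ (since $p_->1$) and $s(\cdot)\in\mathscr{C}^{\log}\cap\mathscr{P}$, so \cref{lem.variable-proposition-B}(1) gives the boundedness of $M$ on $L^{s(\cdot)}$, which is the $\mathcal{A}_{s(\cdot)}$ condition for the constant weight. I expect the only non-automatic step in either route to be the verification that the $\log$-H\"older estimate on $1/p(\cdot)$ transfers to $1/q(\cdot)$ (and thus to $q(\cdot)$ via \cref{rem.2}(b)), but this is immediate from the identity $1/q(x)-1/q(y)=1/p(x)-1/p(y)$.
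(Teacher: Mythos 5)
Your proposal is correct, and your primary route is genuinely different from what the paper does. The paper does not prove this lemma directly but instead cites \cite{pradolini2017characterization} and, in the paragraph preceding the statement, indicates the route through \cref{lem.weight-relation}: reduce $1\in\mathcal{A}_{p(\cdot),q(\cdot)}^{\gamma}$ to $1\in\mathcal{A}_{s(\cdot)}$ with $s(\cdot)=(1-\gamma/n)q(\cdot)$, then invoke the boundedness of $M$ on $L^{s(\cdot)}(\mathbb{R}^n)$. That is exactly your ``alternative route.'' Your primary argument is more elementary and more self-contained: you compute both characteristic-function norms via \cref{lem.character-norm}\ref{enumerate:charac-norm-5}, observe that $1/q(y)+1/p'(y)=1-\gamma/n$ pointwise so the harmonic means add to $1-\gamma/n$, and conclude $\|\chi_Q\|_{q(\cdot)}\|\chi_Q\|_{p'(\cdot)}\approx |Q|^{1-\gamma/n}$ directly. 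This avoids passing through the weighted maximal-operator theory entirely; what it buys is simplicity, at the cost of leaning on the uniform (all scales) validity of $\|\chi_Q\|_{q(\cdot)}\approx|Q|^{1/q_Q}$, which of course is itself a consequence of the same $\log$-H\"older machinery. Your handling of the regularity transfer (from $p(\cdot)$ to $q(\cdot)$ via the identity $1/q(x)-1/q(y)=1/p(x)-1/p(y)$, then to $q(\cdot)$ and $p'(\cdot)$ by \cref{rem.2}) is right, as is the observation that $p_+<n/\gamma$ is implicitly needed when $\gamma>0$ so that $q(\cdot)\in\mathscr{P}(\mathbb{R}^n)$, and the $\gamma=0$ sanity check via \cref{lem.character-norm}\ref{enumerate:charac-norm-2} is apt (note that \cref{lem.weight-relation} requires $\gamma>0$, so the paper's route would also need the $\gamma=0$ case treated separately). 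In short: correct, and a cleaner argument than the one the paper gestures at.
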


The following definition of the multilinear fractional integral operator was considered by several authors (see, for example,
\cite{kenig1999multilinear,moen2009weighted,pradolini2010weighted,tan2017some}).
\begin{definition} [multilinear fractional integral operator] \label{def.mul-frac-inte} \
Let $0 \le \alpha < mn$ and $\vec{f}=(f_{1},f_{2},\dots,f_{m})$. The multilinear fractional integral is defined by
\begin{align*} 
\mathcal{I}_{\alpha}(\vec{f})(x)  & = \dint_{(\mathbb{R}^{n})^{m}} \dfrac{f_{1}(y_{1}) \cdots f_{m}(y_{m})}{(|x-y_{1}|+\cdots+|x-y_{m}|)^{mn-\alpha}}  d \vec{y},
\end{align*}
where  the integral is convergent if   $\vec{f}\in \mathscr{S}(\mathbb{R}^{n})\times\cdots \times\mathscr{S}(\mathbb{R}^{n})$.
\end{definition}

The following lemma for multilinear fractional integral operators in variable Lebesgue spaces is needed,  and its proof can  be found in \cite{tan2017some}.
In addition, the weighted inequalities for multilinear fractional integral operators  has been established by Moen in classical function spaces\cite{moen2009weighted}.

\begin{lemma} \label{lem. multilinear-fractional}
Suppose that $0 < \alpha < mn$, $p_{_{1}}(\cdot),p_{_{2}}(\cdot),\dots,p_{_{m}}(\cdot)\in \mathscr{C}^{\log}(\R^{n}) \cap\mathscr{P}(\R^{n})$ satisfy $\frac{1}{p(x)}= \frac{1}{p_{_{1}}(x)} + \frac{1}{p_{_{2}}(x)}+\cdots+\frac{1}{p_{_{m}}(x)}$, and $({p_{_j}})_{+}<\frac{mn}{\alpha}~(j=1,2,\dots,m)$. Define the variable exponent $q(\cdot)$ by
$$\frac{1}{q(x)}=\frac{1}{p(x)}-\frac{\alpha}{n}.$$
Then there exists a positive constant $C$ such that
$$ \| \mathcal{I}_{\alpha}(\vec{f}) \|_{q(\cdot)} \le C \prod_{i=1}^{m} \|f_{i} \|_{p_{i}(\cdot)}.$$
\end{lemma}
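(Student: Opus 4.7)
The plan is to reduce this multilinear estimate to a pointwise bound by a product of classical linear Riesz potentials and then invoke the known mapping properties of the linear fractional integral on variable Lebesgue spaces, tied together by the generalized H\"older inequality from Lemma~\ref{lem.holder}(3).

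First I would apply the AM-GM inequality to the denominator in the kernel,
\begin{equation*}
|x-y_{1}|+\cdots+|x-y_{m}| \ge m \prod_{i=1}^{m} |x-y_{i}|^{1/m},
\end{equation*}
raise to the $(mn-\alpha)$-th power and invert, to obtain the pointwise domination
\begin{equation*}
\mathcal{I}_{\alpha}(\vec{f})(x) \le C \prod_{i=1}^{m} I_{\alpha/m}(|f_{i}|)(x),
\end{equation*}
where $I_{\beta}g(x)=\int_{\mathbb{R}^{n}}|x-y|^{\beta-n}g(y)\,dy$ is the classical linear Riesz potential of order $\beta$. Since $0<\alpha<mn$, the exponent $\alpha/m$ lies in $(0,n)$, so each linear potential is well defined.

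Next I would introduce auxiliary variable exponents $q_{i}(\cdot)$ via $\frac{1}{q_{i}(x)} = \frac{1}{p_{i}(x)} - \frac{\alpha}{mn}$ for $i=1,\dots,m$. Two routine properties must be checked: (a) $q_{i}(\cdot)\in \mathscr{P}(\mathbb{R}^{n})$, which follows from $1<(p_{i})_{-}$ together with $(p_{i})_{+}<mn/\alpha$; and (b) $q_{i}(\cdot)\in \mathscr{C}^{\log}(\mathbb{R}^{n})$, which is immediate from Remark~\ref{rem.2}(b) because $1/q_{i}(\cdot)$ differs from $1/p_{i}(\cdot)$ by a constant. Summing the reciprocals gives $1/q(x)=\sum_{i}1/q_{i}(x)$, so Lemma~\ref{lem.holder}(3) yields
\begin{equation*}
\|\mathcal{I}_{\alpha}(\vec{f})\|_{q(\cdot)} \le C \prod_{i=1}^{m} \|I_{\alpha/m}(|f_{i}|)\|_{q_{i}(\cdot)}.
\end{equation*}
Finally, each factor is controlled by the known boundedness of the linear Riesz potential $I_{\alpha/m}:L^{p_{i}(\cdot)}(\mathbb{R}^{n})\to L^{q_{i}(\cdot)}(\mathbb{R}^{n})$ (the Capone--Cruz-Uribe--Fiorenza variable-exponent version of Hardy--Littlewood--Sobolev), which applies under exactly the log-H\"older regularity and sub-criticality $(p_{i})_{+}<n/(\alpha/m)=mn/\alpha$ just verified. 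Chaining the three displays produces the claimed bound.

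The only genuinely delicate step is the verification that the auxiliary exponents $q_{i}(\cdot)$ land in the correct class for the linear theorem to be cited as a black box; once that is in place, the argument is a clean composition of AM-GM, H\"older and a linear result. I would expect the AM-GM step to be sharp enough to recover the correct joint scaling, since equality in AM-GM is attained when all $|x-y_{i}|$ coincide, which is precisely the configuration that governs the scaling of $\mathcal{I}_{\alpha}$; thus no loss of exponent range occurs in the reduction.
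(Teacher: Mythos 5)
Your argument is correct, and it is worth noting at the outset that the paper does not supply its own proof of \cref{lem. multilinear-fractional} — it simply cites \cite{tan2017some} — so there is no in-paper argument to compare against. On its own merits your reduction is sound: the AM--GM inequality $\sum_{i}|x-y_{i}|\ge m\bigl(\prod_{i}|x-y_{i}|\bigr)^{1/m}$, raised to the power $mn-\alpha$, yields $(\sum_{i}|x-y_{i}|)^{-(mn-\alpha)}\le C\prod_{i}|x-y_{i}|^{\alpha/m-n}$, and Fubini then gives the pointwise bound $\mathcal{I}_{\alpha}(\vec f)(x)\le C\prod_{i}I_{\alpha/m}(|f_{i}|)(x)$. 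The auxiliary exponents $1/q_{i}(\cdot)=1/p_{i}(\cdot)-\alpha/(mn)$ satisfy $(q_{i})_{+}<\infty$ precisely because $(p_{i})_{+}<mn/\alpha$, and $(q_{i})_{-}>1$ because $1/q_{i}<1/p_{i}\le 1/(p_{i})_{-}<1$; log-H\"older continuity is inherited since $1/q_{i}$ and $1/p_{i}$ differ by a constant (\cref{rem.2}(b)). Summing reciprocals gives $1/q(x)=\sum_{i}1/q_{i}(x)$, so \cref{lem.holder}(3) applies, and the Capone--Cruz-Uribe--Fiorenza variable-exponent Hardy--Littlewood--Sobolev theorem for $I_{\alpha/m}$ closes the argument under $(p_{i})_{+}<n/(\alpha/m)$, which is exactly the hypothesis. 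One remark on scope: the AM--GM reduction is known to be lossy in general (the Kenig--Stein range for $\mathcal{I}_{\alpha}$ is strictly larger than what a factorization into linear Riesz potentials can reach, because it allows some $p_{j}\ge mn/\alpha$ provided the others compensate), but the lemma as stated restricts to $(p_{j})_{+}<mn/\alpha$ for \emph{every} $j$, which is precisely the sub-range where your factorization is admissible, so no generality is lost here.
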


\subsection{A pointwise estimate}

 The following notations can be founded in refs. \cite{pradolini2017characterization,cabral2016extrapolation}.

\begin{definition} \label{def.pointwise-sharp}\ \
Let $f$ be a locally integrable function defined on  $\mathbb{R}^{n}$.
\begin{list}{}{} 
\item[(1)]  Set $0\le \delta<1$. The $\delta$-sharp maximal operator is defined by
\begin{align}\label{equ:del-sharp}
f_{\delta}^{\sharp} (x)     &= \sup_{Q\ni x} \dfrac{1}{|Q|^{1+\delta/n}} \int_{Q} |f(y)-f_{Q}| dy,   
\end{align}
where the supremum is taken over all cube $Q \subset\mathbb{R}^{n}$ containing $x$, and $f_{Q}=|Q|^{-1} \int_{Q} f(z) dz$ denotes the average of $f$ over the cube $Q\subset \mathbb{R}^{n}$.
\item[(2)] Let $0\le \delta(\cdot)<1$, $p(\cdot)\in \mathscr{P}(\mathbb{R}^{n})$ such that  $\delta(\cdot)/n= 1/\beta-1/p(\cdot)$. The $\delta(\cdot)$-sharp maximal operator is defined by
\begin{equation} \label{equ:delta-sharp}
f_{\delta(\cdot)}^{\sharp} (x)     = \sup_{Q\ni x} \dfrac{1}{|Q|^{1/\beta-1} \|\chi_{Q}\|_{p'(\cdotp)} } \left(\frac{1}{|Q|}\int_{Q}|f(y)-f_{Q}| dy \right).
\end{equation}
\begin{enumerate}[label=(\roman*),itemindent=1em]  
\item For any $\gamma>0$,  the following generalization of the operator above is
\begin{equation*} 
f_{\delta(\cdot),\gamma}^{\sharp} (x)     = \sup_{Q\ni x} \dfrac{1}{|Q|^{1/\beta-1} \|\chi_{Q}\|_{p'(\cdotp)} } \left(\frac{1}{|Q|}\int_{Q}\Big| |f(y)|^{\gamma}-(|f|^{\gamma})_{Q} \Big| dy \right)^{1/\gamma}.
\end{equation*}
\end{enumerate}
\item[(3)]  Let $0\le\alpha<n$ and $\epsilon>0$, define the following operators via
\begin{enumerate}  [itemindent=1em]
\item[(ii)]  $M_{\epsilon}f (x)= \big[ M(|f|^{\epsilon})(x) \big]^{1/\epsilon}=\Big(\sup\limits_{Q\ni x}  \dfrac{1}{|Q|}  \dint_{Q} |f(y)|^{\epsilon} d y \Big)^{1/\epsilon}$.
\item[(iii)] $M_{\alpha,L(\log L)}f(x) = \sup\limits_{Q\ni x}  |Q|^{\alpha/n} \|f\|_{L(\log L),Q} $,
where $\|\cdot\|_{L(\log L),Q}$ is the Luxemburg type average defined by
$$ \|f\|_{L(\log L),Q} = \inf \Big\{\lambda>0:  \frac{1}{|Q|} \int_{Q}  \frac{|f(x)|}{\lambda} \log(e+|f|/\lambda)  \mathrm{d}x \le 1 \Big\}.$$
\end{enumerate}
\end{list}
\end{definition}

The following multilinear maximal functions that adapts to the multilinear Calder\'{o}n-Zygmund theory are introduced by Lerner et al. in \cite{lerner2009new}.

\begin{definition} [multilinear maximal functions, see  \cite{lu2014multilinear}]
\label{def.mul-max} \
For all locally integrable functions $\vec{f}=(f_{1},f_{2},\dots,f_{m})$ and $x\in \mathbb{R}^n$,
\begin{list}{}{} 
\item[(1)] the multilinear maximal functions $\mathcal{M}$ and $\mathcal{M}_{r}$ are defined by
\begin{enumerate}[label=(\roman*),itemindent=1em]  
\item
$\mathcal{M}(\vec{f})(x)   = \sup\limits_{Q\ni x} \prod\limits_{j=1}^{m} \dfrac{1}{|Q|}  \dint_{Q} |f_{j}(y_{j})| d y_{j}$,

\item
$\mathcal{M}_{r}(\vec{f})(x)   = \sup\limits_{Q\ni x} \prod\limits_{j=1}^{m} \Big(\dfrac{1}{|Q|}  \dint_{Q} |f_{j}(y_{j})|^{r} d y_{j} \Big)^{1/r}$, for $r>1$,
\end{enumerate}
\item[(2)] the maximal functions related to Young function $\Phi(t)=t(1+\log^{+}t)$ are defined by
\begin{enumerate}[label=(\roman*),itemindent=1em]  
\item   
$\mathcal{M}_{L(\log L)}^{i}(\vec{f})(x)   = \sup\limits_{Q\ni x}  \|f_{i}\|_{L(\log L) ,Q} \prod\limits_{j=1 \atop j\neq i}^{m} \dfrac{1}{|Q|}  \dint_{Q} |f_{j}(y_{j})| d y_{j} $,

\item  
$\mathcal{M}_{L(\log L)} (\vec{f})(x)   = \sup\limits_{Q\ni x}   \prod\limits_{j=1}^{m} \|f_{j}\|_{L(\log L) ,Q} $,
\end{enumerate}
\end{list}
where the supremum is taken over all the cubes $Q$ containing $x$.
\end{definition}

Obviously, if $r>1$, then the following pointwise estimates hold
\begin{align} \label{equ:mulmax-relation}
\mathcal{M}(\vec{f})(x)  \le C \mathcal{M}_{L(\log L)}^{i} (\vec{f})(x)   &\le C_{1} \mathcal{M}_{L(\log L)} (\vec{f})(x)   \le C_{2} \mathcal{M}_{r}(\vec{f})(x).
\end{align}
The first two inequalities in \eqref{equ:mulmax-relation} follows from \eqref{equ:holder-4} with $t=1$, that is
\begin{align*}
\frac{1}{|Q|}  \dint_{Q} |f_{j}(y_{j})| d y_{j}   &\le \|f_{j}\|_{L(\log L) ,Q},
\end{align*}
and the last one follows from the generalized Jensen's inequlity \cite[see Lemma 4.2 in ][]{lu2014multilinear}.

In addition, One can see that $\mathcal{M}_{L(\log L)} (\vec{f})$ is pointwise controlled by a multiple of
$ \prod\limits_{j=1}^{m} M^{2}(f_{j})(x)$, where $M^{2}=M\circ M$.

\begin{definition}[Lipschitz-type spaces]   \label{def.lip-space} \
\begin{enumerate}[ label=(\arabic*),itemindent=1em] 
\item  Let $0< \delta< 1$.  The space $\Lambda_{\delta}$ of the Lipschitz continuous functions with order $\delta$ is defined by
$$
 \Lambda_{\delta}(\mathbb{R}^{n}) = \{f: |f(x)-f(y)| \le C |x-y|^{\delta} \ \mbox{for a.e.}\ x,y\in \mathbb{R}^{n} \},
$$
where $f$  is the locally integrable  function on $\mathbb{R}^{n}$, and the   smallest  constant $C>0$  will be denoted Lipschitz norm by $\|f\|_{\Lambda_{\delta}}$.
\item  Let $0\le \delta< 1$.   The space $\mathbb{L}(\delta) $ is defined to be the set of all locally integrable  functions $f$, i.e., there exists a positive constant $C > 0$, such that
$$ \sup_{Q}\dfrac{1}{|Q|^{1+\delta/n} } \int_{Q} |f(y)-f_{Q}| dy<C,$$
where the supremum is taken over every cube $Q\subset \mathbb{R}^{n}$ and $f_{Q}=\frac{1}{|Q|} \int_{Q} f(z) dz$. The least constant $C$  will be
denoted by $\|f\|_{\mathbb{L}(\delta)}$.
\item \cite[see ][]{ramseyer2013lipschitz}\ Given $0 <\alpha < n$ and an exponent function $p(\cdot)\in \mathscr{P}_{1}(\R^{n})=\{1\}\cup\mathscr{P}(\R^{n})$. We say that a locally integrable function $f$ belongs to   $\mathbb{L}_{\alpha,p(\cdot)}=\mathbb{L}_{\alpha,p(\cdot)}(\mathbb{R}^{n})$ if there exists a constant $C$ such that
    $$\dfrac{1}{|B|^{\alpha/n}\|\chi_{B}\|_{p'(\cdot)}} \int_{B} |f(y)-f_{B}| dy< C$$
 for every ball $B\subset \mathbb{R}^{n}$, with $f_{B}=\frac{1}{|B|} \int_{B} f(z) dz$. The least constant $C$  will be
denoted by $\|f\|_{\mathbb{L}_{\alpha,p(\cdot)}}$.
\item  Let $r(\cdot)\in \mathscr{C}^{\log}(\R^{n}) \cap\mathscr{P}(\R^{n})$ such that $1<\beta\le r_{-}\le r(x)\le r_{+}<\frac{n\beta}{(n-\beta)^{+}}$, and set $\frac{\delta(x)}{n}=\frac{1}{\beta}-\frac{1}{r(x)}$. The space $\mathbb{L}(\delta(\cdot)) $ is defined by the set of the measurable functions $f$ such that (see \cite{ramseyer2013lipschitz} for more details )
$$\|f\|_{\mathbb{L}(\delta(\cdot)) } = \sup_{B}\dfrac{1}{|B|^{1/\beta}\|\chi_{B}\|_{r'(\cdot)}} \int_{B} |f(y)-f_{B}| dy<\infty.$$
\item (weighted Lipschitz integral spaces $\mathbb{L}_{w}(\delta)$, see \cite{pradolini2001class} or \cite{pradolini2001two})
 Let $w$ be a weight and $0\le \delta< 1$, we say that a locally integrable function $f$ belong to $\mathbb{L}_{w}(\delta)$ if there exists a positive constant $C$ such that the inequality
$$ \dfrac{\|w\chi_{_{B}}\|_{\infty}}{|B|^{1+\delta/n}} \int_{B} |f(y)-f_{B}| dy<C$$
holds for every ball $B\subset \mathbb{R}^{n}$. The least constant $C$ will be denoted by  $\|f\|_{\mathbb{L}_{w}(\delta) }$.
\end{enumerate}
\end{definition}

\begin{remark}  \label{rem.Lipschitz-def}
\begin{enumerate}[leftmargin=2em,label=(\roman*),itemindent=1.5em]  
\item  In (1) of   \cref{def.lip-space},  it   is well known that the space $\Lambda_{\delta}$ coincides with the space $\mathbb{L}(\delta)$  (see refs. \cite{harboure1997boundedness,pradolini2017characterization}).
\item  In (2) of   \cref{def.lip-space},  it is not difficult to see that, for   $\delta=0$, the space $\mathbb{L}(\delta) $ coincides with the space of bounded mean oscillation functions BMO (see  \cite{john1961functions}).
\item  In (4) of   \cref{def.lip-space}, denote $z^{+}$ by  (see  \cite{cabral2016extrapolation})
\begin{equation*}
z^{+}=
\left\{ \begin{aligned}
        z & \ \ \hbox{if } z>0\\
        0 &  \ \ \hbox{if }z\le  0
                          \end{aligned} \right..
                          \end{equation*}
In addition, when $r(x)$ is equal to a constant $r$, this space coincides with the space  $\mathbb{L}(n/\beta-n/r)$.
\item    In (5) of   \cref{def.lip-space},  it is not difficult to see that, for   $\delta=0$, the space $\mathbb{L}_{w}(\delta)$ coincides with one of the versions of weighted bounded mean oscillation spaces (see  \cite{muckenhoupt1976weighted}). Moreover, for the case $w\equiv 1$, the space $\mathbb{L}_{w}(\delta)$ is the known Lipschitz integral space for $0< \delta< 1$.
\end{enumerate}
\end{remark}

\begin{lemma}\cite{pradolini2017characterization} \label{lem:T8-condition}
Let  $p(\cdot)\in  \mathscr{C}^{\log}(\R^{n}) \cap\mathscr{P}(\R^{n})$ and $1<\beta\le p_{-}$. Then the functional
\begin{align}
 a(Q)   &= |Q|^{1/\beta-1} \|\chi_{Q}\|_{p'(\cdotp)}
\end{align}
satisfies the $T_{\infty}$ condition, that is, there exists a positive constant $C$ such that $a(Q') \le C a(Q) $ for each cube $Q$ and each cube $Q'\subset Q$.
\end{lemma}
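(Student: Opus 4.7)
The plan is to reduce the functional $a(Q) = |Q|^{1/\beta - 1}\|\chi_Q\|_{p'(\cdot)}$ to an approximate power of $|Q|$ with non-negative exponent and then verify the claimed monotonicity by a case analysis. By part (a) of \cref{rem.2}, $p'(\cdot)\in\mathscr{C}^{\log}(\mathbb{R}^n)\cap\mathscr{P}(\mathbb{R}^n)$, so \cref{lem.character-norm}\ref{enumerate:charac-norm-5} applies and gives $\|\chi_Q\|_{p'(\cdot)}\approx |Q|^{1/p'_Q}$ uniformly in $Q$, where $1/p'_Q=1-1/p_Q$ and $1/p_Q$ is the harmonic mean of $1/p(\cdot)$ over $Q$. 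Setting $\alpha_Q:=1/\beta-1/p_Q$, this yields $a(Q)\approx |Q|^{\alpha_Q}$, and the hypothesis $\beta\le p_-\le p_Q$ forces $\alpha_Q\ge 0$ for every cube.

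For $Q'\subset Q$, I would then factor
\begin{equation*}
\frac{a(Q')}{a(Q)} \approx \Bigl(\frac{|Q'|}{|Q|}\Bigr)^{\alpha_{Q'}} \, |Q|^{1/p_Q - 1/p_{Q'}}.
\end{equation*}
The first factor is bounded by $1$ since $|Q'|/|Q|\le 1$ and $\alpha_{Q'}\ge 0$. Thus the whole problem reduces to the uniform estimate $|Q|^{1/p_Q - 1/p_{Q'}} \le C$ over all pairs $Q'\subset Q$; this is the quantitative content of the log-Hölder hypothesis and I expect it to be the main technical point.

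I would dispatch this in three cases based on the size of $Q$. \emph{Case 1}, $\operatorname{diam}(Q)\le 1/2$: the local log-Hölder bound for $1/p(\cdot)$ (part (b) of \cref{rem.2}), applied pointwise and then averaged over $Q'$ and $Q$, yields $|1/p_Q-1/p_{Q'}|\le C/\log(1/\operatorname{diam}(Q))$, from which $|Q|^{|1/p_Q-1/p_{Q'}|}\le e^{Cn}$ by a direct computation using $|Q|\approx\operatorname{diam}(Q)^n$. \emph{Case 2}, $|Q|,|Q'|\ge 1$: the alternative expression $\|\chi_Q\|_{p'(\cdot)}\approx|Q|^{1/p'_\infty}$ from \cref{lem.character-norm}\ref{enumerate:charac-norm-5} gives $a(Q)\approx|Q|^{1/\beta-1/p_\infty}$, and since $p_\infty\ge p_-\ge \beta$ the exponent is non-negative, so $a(Q')\le Ca(Q)$ directly from $|Q'|\le|Q|$. \emph{Case 3}, $|Q'|<1\le|Q|$: bound $a(Q')\le C_1$ from above via the small-cube representation (with $\alpha_{Q'}\ge 0$ and $|Q'|<1$) and $a(Q)\ge c_2>0$ from below via the large-cube representation (with non-negative exponent and $|Q|\ge 1$), so the ratio is controlled. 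The remaining overlap range $\operatorname{diam}(Q)\in[1/2,2]$ contributes only bounded constants since both $|Q|$ and the exponent differences are bounded. Combining the cases delivers $a(Q')\le Ca(Q)$.
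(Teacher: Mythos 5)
The paper does not actually prove this lemma; it is stated with an attribution to \cite{pradolini2017characterization} and no argument is supplied, so there is no internal proof to compare against. On its own merits your argument is correct in outline and uses only tools available in the paper. The reduction $a(Q)\approx|Q|^{\alpha_Q}$ with $\alpha_Q=1/\beta-1/p_Q\ge 0$ via \cref{lem.character-norm}\ref{enumerate:charac-norm-5} applied to $p'(\cdot)$ is legitimate because $p'(\cdot)\in\mathscr{C}^{\log}(\R^n)\cap\mathscr{P}(\R^n)$ by part (a) of \cref{rem.2}, and $\beta\le p_-\le p_Q$ does force $\alpha_Q\ge 0$ uniformly. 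Your factorization of $a(Q')/a(Q)$ correctly isolates the power $|Q|^{1/p_Q-1/p_{Q'}}$, and local and at-infinity log-H\"older continuity of $1/p(\cdot)$ is precisely what makes this quantity uniformly bounded.

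One point to tighten: the three cases you list do not partition the pairs $Q'\subset Q$. If $|Q|<1$ but $\operatorname{diam}(Q)>1/2$ none of them applies, and your closing remark about the ``overlap range'' is too brief to count as a proof. The fix is routine: in that intermediate regime $|Q|$ lies in a fixed compact subinterval of $(0,1]$ and $0\le\alpha_Q\le 1/\beta$, so $a(Q)\approx|Q|^{\alpha_Q}$ is bounded away from zero; meanwhile for every $Q'\subset Q$ one has $|Q'|<1$ and $\alpha_{Q'}\ge 0$, so $a(Q')\approx|Q'|^{\alpha_{Q'}}$ is bounded above. Making this explicit (and double-checking the threshold: $|Q|<1$ corresponds to $\operatorname{diam}(Q)<\sqrt{n}$, not $2$) closes the case split and yields the claim.
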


\begin{lemma}\cite{li2005jhon} \label{lem:lip-equivalent}
Let  $1\le r<\infty$ and $a\in T_{\infty}$. Then
\begin{align*}
 \sup _{Q} \frac{1}{a(Q)} \left(\frac{1}{|Q|}\dint_{Q} \Big| f(x)-(f)_{Q}) \Big|^{r} dx \right)^{1/r}   &\approx \sup _{Q} \frac{1}{a(Q)} \frac{1}{|Q|}\dint_{Q} \Big| f(x)-(f)_{Q}) \Big| dx.
\end{align*}
\end{lemma}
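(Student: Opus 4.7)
The inequality $\gtrsim$ is immediate from H\"older's inequality: for every cube $Q$,
\[
\frac{1}{|Q|}\int_{Q}|f(x)-f_{Q}|\,dx \le \Big(\frac{1}{|Q|}\int_{Q}|f(x)-f_{Q}|^{r}\,dx\Big)^{1/r},
\]
so dividing by $a(Q)$ and taking supremum gives the $L^{1}$ quantity on the left bounded by the $L^{r}$ one. The plan therefore concentrates on the reverse inequality, and the natural route is a John--Nirenberg style exponential distributional estimate adapted to the functional $a(\cdot)$.

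Let $N=\sup_{Q}\frac{1}{a(Q)}\frac{1}{|Q|}\int_{Q}|f-f_{Q}|\,dx$, which we assume finite, and fix an arbitrary cube $Q_{0}$. I would first prove the John--Nirenberg type bound
\[
\bigl|\{x\in Q_{0}:\,|f(x)-f_{Q_{0}}|>\lambda\}\bigr|\le C_{1}\,e^{-C_{2}\lambda/(N a(Q_{0}))}|Q_{0}|\qquad (\lambda>0),
\]
with $C_{1},C_{2}$ depending only on $n$ and the $T_{\infty}$-constant of $a$. The argument is a standard Calder\'on--Zygmund stopping-time iteration: apply a CZ decomposition of $|f-f_{Q_{0}}|$ on $Q_{0}$ at height $b\cdot N a(Q_{0})$ for a fixed $b>1$, producing pairwise disjoint maximal cubes $\{Q_{j}^{(1)}\}$ with $\sum_{j}|Q_{j}^{(1)}|\le |Q_{0}|/b$ and $|f_{Q_{j}^{(1)}}-f_{Q_{0}}|\le 2^{n}bN a(Q_{0})$, while $|f-f_{Q_{0}}|\le bN a(Q_{0})$ a.e.\ off the union. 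Iterating inside each $Q_{j}^{(1)}$, the crucial input is the $T_{\infty}$ condition from \cref{lem:T8-condition}: at every generation $k$ and every selected subcube $Q_{j}^{(k)}\subset Q_{0}$ we have $a(Q_{j}^{(k)})\le C\,a(Q_{0})$, so the stopping height at each generation can be taken to be a fixed multiple of $N a(Q_{0})$ and the measures of the bad sets shrink geometrically by the factor $1/b$. Combining the deterministic pointwise bound on the good set with the geometric measure decay yields the claimed exponential decay.

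Once the exponential estimate is in hand, the passage to $L^{r}$ is routine: by the layer-cake formula,
\[
\frac{1}{|Q_{0}|}\int_{Q_{0}}|f-f_{Q_{0}}|^{r}\,dx = \frac{r}{|Q_{0}|}\int_{0}^{\infty}\!\!\lambda^{r-1}\bigl|\{x\in Q_{0}:|f-f_{Q_{0}}|>\lambda\}\bigr|\,d\lambda \le C\bigl(N a(Q_{0})\bigr)^{r}\int_{0}^{\infty}s^{r-1}e^{-C_{2}s}\,ds,
\]
after the substitution $s=\lambda/(N a(Q_{0}))$. Taking $r$-th roots, dividing by $a(Q_{0})$, and taking the supremum over $Q_{0}$ produces the desired inequality with constant depending only on $n$, $r$, and the $T_{\infty}$-constant.

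The main technical obstacle will be the careful bookkeeping of the iteration: one must ensure that the selected generations are genuinely disjoint, that the $T_{\infty}$ constant degrades only by a harmless multiplicative factor (not cumulatively), and that the pointwise control $|f_{Q_{j}^{(k)}}-f_{Q_{j}^{(k-1)}}|\lesssim N a(Q_{0})$ telescopes correctly so that on each generation-$k$ bad cube one still has $|f_{Q_{j}^{(k)}}-f_{Q_{0}}|\lesssim k\cdot N a(Q_{0})$. Once that uniform control is established, the exponential decay, and hence the lemma, follows in the standard way.
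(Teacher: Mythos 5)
The paper does not prove this lemma at all; it is imported verbatim from the reference \cite{li2005jhon}, whose very title (``John--Nirenberg inequality and self-improving properties'') indicates exactly the route you chose. So your blind proof is, in spirit, a reconstruction of the cited argument rather than an alternative to a proof the paper actually carries out.

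Your sketch is correct, and the one place that deserves a sharper statement is the role of the $T_{\infty}$ constant in the stopping-time iteration. If $C_{a}$ denotes the $T_{\infty}$ constant of $a$ (so $a(Q')\le C_{a}\,a(Q)$ for $Q'\subset Q$), then at generation $k\ge 2$ the relevant average is
\[
\frac{1}{|Q_{j}^{(k-1)}|}\int_{Q_{j}^{(k-1)}}|f-f_{Q_{j}^{(k-1)}}|\,dx \;\le\; N\,a\bigl(Q_{j}^{(k-1)}\bigr)\;\le\; C_{a}\,N\,a(Q_{0}),
\]
so the measures of the generation-$k$ selected cubes contract by the factor $C_{a}/b$, not $1/b$, relative to the previous generation. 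To secure geometric decay you must therefore choose $b> C_{a}$ at the outset (e.g.\ $b=2C_{a}$), which is indeed ``a harmless multiplicative factor'' exactly as you flag in your closing paragraph, but it is worth saying explicitly, since for $b\le C_{a}$ the Calder\'on--Zygmund selection at level $bN\,a(Q_{0})$ might fail to be admissible inside a stopping cube. With that one adjustment your exponential distributional estimate and the subsequent layer-cake computation are correct, and the bound is uniform over all starting cubes $Q_{0}$ with constants depending only on $n$, $r$, and $C_{a}$, which is precisely what the lemma asserts.
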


The following inequalities are also necessary (see (2.16) in \cite{lerner2009new} or  Lemma 4.6 in \cite{lu2014multilinear}  or page 485 in \cite{garcia1985weighted}).

\begin{lemma}[Kolmogorov's inequality]  \label{lem:kolmogorov}
Let  $0<p<q<\infty $. Then there is a positive constant $C$ such that for any measurable funcction $f$ there has
\begin{align*}
 |Q|^{-1/p} \|f\|_{L^{p}(Q)}  &\le C |Q|^{-1/q} \|f\|_{L^{q,\infty}(Q)},
\end{align*}
where $L^{q,\infty}(Q) $ denotes   the weak space with norm $\|f\|_{L^{q,\infty}(Q)} = \sup\limits_{t>0} t|\{ x\in Q: |f(x)|>t\}|^{1/q} $.
\end{lemma}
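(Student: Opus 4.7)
The plan is to represent the $L^{p}$ norm via the layer-cake formula and then bound the resulting distributional integral by splitting at a carefully chosen threshold, using the trivial bound on one piece and the weak-type information on the other.

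First, I would write
\begin{equation*}
\|f\|_{L^{p}(Q)}^{p} = \int_{Q} |f(x)|^{p}\, dx = p\int_{0}^{\infty} t^{p-1}\,\lambda(t)\, dt,
\end{equation*}
where $\lambda(t) = |\{x\in Q: |f(x)|>t\}|$. For any threshold $t_{0}>0$ I would split the integral over $(0,t_{0})\cup[t_{0},\infty)$. On the lower interval I would use the trivial bound $\lambda(t)\le |Q|$, while on the upper interval I would use the weak-type estimate $\lambda(t)\le t^{-q}\|f\|_{L^{q,\infty}(Q)}^{q}$, which follows directly from the definition of $\|f\|_{L^{q,\infty}(Q)}$. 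Because $q>p$, the resulting integral $p\int_{t_{0}}^{\infty} t^{p-1-q}\,dt = \frac{p}{q-p} t_{0}^{p-q}$ converges; this is precisely where the hypothesis $p<q$ is needed.

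The next step is to choose $t_{0}$ so that the two contributions balance. The natural choice is
\begin{equation*}
t_{0} = |Q|^{-1/q}\,\|f\|_{L^{q,\infty}(Q)},
\end{equation*}
which makes both $|Q|\,t_{0}^{p}$ and $\frac{p}{q-p}\, t_{0}^{p-q}\|f\|_{L^{q,\infty}(Q)}^{q}$ equal (up to the factor $\frac{p}{q-p}$ in the second term) to $|Q|^{1-p/q}\|f\|_{L^{q,\infty}(Q)}^{p}$. Summing, one obtains
\begin{equation*}
\|f\|_{L^{p}(Q)}^{p} \le \frac{q}{q-p}\,|Q|^{1-p/q}\,\|f\|_{L^{q,\infty}(Q)}^{p}.
\end{equation*}
Taking the $p$-th root and dividing both sides by $|Q|^{1/p}$ yields the inequality with $C=(q/(q-p))^{1/p}$.

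There is no substantial obstacle in this argument; the only delicate points are (i) verifying that $C$ depends only on $p$ and $q$, which amounts to checking that the threshold $t_{0}$ scales correctly, and (ii) making sure the integrability on $[t_{0},\infty)$ is genuinely enforced by the assumption $p<q$, so that the weak-type bound on the tail is useful. Both are handled by the choice of $t_{0}$ above, so the proof reduces to the two inequalities and one optimization described here.
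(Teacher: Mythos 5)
Your argument is correct, and it is the standard proof of Kolmogorov's inequality. The paper does not supply a proof of this lemma at all; it simply cites three references (Lerner et al.\ (2.16), Lu--Zhang Lemma 4.6, Garc\'{\i}a-Cuerva--Rubio de Francia p.~485), so there is no ``paper's own proof'' to compare against. Your layer-cake decomposition of $\|f\|_{L^p(Q)}^p$, the split at a threshold $t_0$, the use of the trivial bound $\lambda(t)\le|Q|$ on the low range and the weak-type bound $\lambda(t)\le t^{-q}\|f\|_{L^{q,\infty}(Q)}^q$ on the tail, and the balanced choice $t_0=|Q|^{-1/q}\|f\|_{L^{q,\infty}(Q)}$ are exactly how those references prove it. Your computation of the constant $C=\bigl(q/(q-p)\bigr)^{1/p}$ is correct, and you rightly observe that the tail integral converges precisely because $p<q$, which is where the hypothesis enters. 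Nothing is missing; the proof is complete and self-contained.
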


\begin{lemma}\cite{pradolini2017characterization}\label{lem:lip-relation}
Let $p(\cdot)\in  \mathscr{C}^{\log}(\R^{n}) \cap\mathscr{P}(\R^{n})$ , $1<\beta\le p_{-}$  such that  $0\le \delta(\cdot)/n= 1/\beta - 1/p(\cdot) \le 1$ and $b \in \mathbb{L}(\delta(\cdot)) $.
 \begin{itemize}[ itemindent=1em]
\item[(1)] Then there is a positive constant $C$ there has
\begin{align}\label{equ:lip-relation-1}
 \sup _{Q} \dfrac{\| b-b_{Q}\|_{\exp L,Q}}{|Q|^{1/\beta-1} \|\chi_{Q}\|_{p'(\cdotp)}}    &\le C \|b\|_{\mathbb{L}(\delta(\cdot))}.
\end{align}

\item[(2)]  Then there is a positive constant $C$ such that for every $j\in \mathbb{N}$ there has
\begin{align}\label{equ:lip-relation-2}
  | b_{a^{k_{0}(j+1)}Q}-b_{a^{k_{0}}Q} |     &\le C j\|b\|_{\mathbb{L}(\delta(\cdot))}  |a^{k_{0}(j+1)}Q|^{1/\beta-1} \|\chi_{a^{k_{0}(j+1)}Q}\|_{p'(\cdotp)}.
\end{align}

\end{itemize}
\end{lemma}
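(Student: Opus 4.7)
The plan is to handle the two parts separately: part (2) is a telescoping argument using the $T_\infty$ condition on $a(Q)=|Q|^{1/\beta-1}\|\chi_Q\|_{p'(\cdot)}$, while part (1) requires a John--Nirenberg type upgrade from an $L^1$-oscillation estimate to an $\exp L$-oscillation estimate, again powered by the $T_\infty$ property. I will first carry out (2) since it is the quick half, and then tackle (1).

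For part (2), I would simply telescope:
\begin{align*}
\big|b_{a^{k_0(j+1)}Q}-b_{a^{k_0}Q}\big| &\le \sum_{i=1}^{j}\big|b_{a^{k_0(i+1)}Q}-b_{a^{k_0 i}Q}\big|.
\end{align*}
Because $a^{k_0 i}Q\subset a^{k_0(i+1)}Q$ with volume ratio $a^{k_0 n}$, each term is dominated by
\begin{align*}
\frac{a^{k_0 n}}{|a^{k_0(i+1)}Q|}\int_{a^{k_0(i+1)}Q}|b(y)-b_{a^{k_0(i+1)}Q}|\,dy
&\le a^{k_0 n}\,\|b\|_{\mathbb{L}(\delta(\cdot))}\,a(a^{k_0(i+1)}Q),
\end{align*}
by the very definition of $\mathbb{L}(\delta(\cdot))$. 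Since $a^{k_0(i+1)}Q\subset a^{k_0(j+1)}Q$ for every $1\le i\le j$, Lemma~\ref{lem:T8-condition} (the $T_\infty$ property) yields $a(a^{k_0(i+1)}Q)\le C\,a(a^{k_0(j+1)}Q)$. Summing the $j$ terms gives the desired factor $Cj\,\|b\|_{\mathbb{L}(\delta(\cdot))}\,a(a^{k_0(j+1)}Q)$, which is exactly \eqref{equ:lip-relation-2}.

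For part (1), the key observation is that $a(Q)=|Q|^{1/\beta-1}\|\chi_Q\|_{p'(\cdot)}$ is a $T_\infty$ functional, so a generalized John--Nirenberg inequality of the form
\begin{align*}
\big|\{x\in Q:|b(x)-b_{Q}|>\lambda\}\big| &\le c_1\,e^{-c_2\lambda/(M\,a(Q))}\,|Q|,\qquad \lambda>0,
\end{align*}
holds whenever $|Q|^{-1}\int_Q|b-b_Q|\le M\,a(Q)$ for all $Q$ (here $M\approx\|b\|_{\mathbb{L}(\delta(\cdot))}$). I would prove this by a Calder\'on--Zygmund stopping-time argument on $b-b_{Q_0}$ at height $2^{n+1} M\,a(Q_0)$, which produces a disjoint family of maximal subcubes $\{Q_k^{(1)}\}\subset Q_0$ with $M\,a(Q_0)<|Q_k^{(1)}|^{-1}\int_{Q_k^{(1)}}|b-b_{Q_0}|\le 2^{n+1}M\,a(Q_0)$; the $T_\infty$ condition (which in particular implies the doubling of $a$) lets the $\mathbb{L}(\delta(\cdot))$-hypothesis pass to the subcubes uniformly. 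Iterating yields the expected geometric decay $|\{|b-b_{Q_0}|>m C M\,a(Q_0)\}|\le 2^{-m}|Q_0|$, which is equivalent to the exponential estimate above and hence, via the definition of the Luxemburg norm $\|\cdot\|_{\exp L,Q}$, to
\begin{align*}
\|b-b_Q\|_{\exp L,Q} &\le C\,a(Q)\,\|b\|_{\mathbb{L}(\delta(\cdot))},
\end{align*}
i.e.\ \eqref{equ:lip-relation-1}. (Alternatively, one can combine Lemma~\ref{lem:lip-equivalent} with quantitative control $\|b-b_Q\|_{L^r,Q}\le Cr\,a(Q)\|b\|_{\mathbb{L}(\delta(\cdot))}$ and expand $e^{|b-b_Q|/\lambda}$ in its Taylor series, using $r^r/r!\le e^r$; but obtaining the crucial linear-in-$r$ bound still requires the John--Nirenberg iteration.)

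The main obstacle is the iteration step in (1): one has to verify that after the Calder\'on--Zygmund stopping argument, the hypothesis continues to apply to each selected subcube with a constant that does not blow up, and this is precisely where the $T_\infty$ property is indispensable. Once that iteration is in place, both the exponential level-set bound and part (2)'s telescoping argument close the proof.
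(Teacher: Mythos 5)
The paper cites this lemma from Pradolini--Ramos \cite{pradolini2017characterization} without reproducing a proof, so there is no in-paper argument to compare against; I therefore assess your proposal on its own merits, and it is correct. Your part~(2) is exactly the standard telescoping argument: each gap $|b_{a^{k_0(i+1)}Q}-b_{a^{k_0 i}Q}|$ is bounded by the doubling constant $a^{k_0 n}$ times the mean oscillation of $b$ over $a^{k_0(i+1)}Q$, which the definition of $\mathbb{L}(\delta(\cdot))$ controls by $\|b\|_{\mathbb{L}(\delta(\cdot))}\,a\!\left(a^{k_0(i+1)}Q\right)$, and \Cref{lem:T8-condition} then pushes each $a\!\left(a^{k_0(i+1)}Q\right)$ up to $C\,a\!\left(a^{k_0(j+1)}Q\right)$; summing the $j$ terms gives precisely \eqref{equ:lip-relation-2} with a constant $C=a^{k_0 n}\cdot C_{T_\infty}$ independent of $j$. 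Your part~(1) is the generalized John--Nirenberg inequality for spaces defined by a $T_\infty$ functional $a(Q)$: the Calder\'on--Zygmund stopping-time iteration at height $\sim M\,a(Q_0)$ works because $T_\infty$ guarantees $a(Q')\le C\,a(Q_0)$ for every selected subcube $Q'\subset Q_0$, so the threshold can be kept proportional to $a(Q_0)$ throughout, yielding the geometric decay $|\{|b-b_{Q_0}|>m\,C\,M\,a(Q_0)\}|\le 2^{-m}|Q_0|$; the passage from this exponential level-set estimate to the bound $\|b-b_Q\|_{\exp L,Q}\le C\,M\,a(Q)$ on the Luxemburg norm is routine. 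The only point I would flag is that your presentation of the iteration leaves implicit the standard re-centering step (replacing $|b-b_{Q_0}|$ by $|b-b_{Q_k}|$ on each selected subcube and accumulating the shifts $|b_{Q_k}-b_{Q_0}|\le 2^n\alpha$), but that is bookkeeping rather than a gap.
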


\begin{lemma}\cite[see Lemma 4.11 in ][]{pradolini2017characterization} \label{lem.sharp-estimate-var-1}
Let  $0<\gamma<1 $, $r(\cdot), p(\cdot)\in  \mathscr{C}^{\log}(\R^{n}) \cap\mathscr{P}(\R^{n})$, and let $q(\cdot), \beta, \delta(\cdot)$  such that  $0\le \delta(\cdot)/n= 1/p(\cdot)- 1/q(\cdot)= 1/\beta - 1/r(\cdot) \le 1/n$. If $\|f\|_{q(\cdot)}<\infty$,   then there is a positive constant $C$ such that
\begin{align*}
 \|f\|_{q(\cdot)}  &\le C  \|f_{\delta(\cdot),\gamma}^{\sharp}\|_{p(\cdot)}.
\end{align*}
\end{lemma}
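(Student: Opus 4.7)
The strategy I would follow is to convert this variable-exponent fractional Fefferman--Stein inequality into a duality plus Calder\'on--Zygmund decomposition argument, after reducing to an $L^{1}$-type oscillation via the substitution $g = |f|^{\gamma}$. Setting $a(Q) = |Q|^{1/\beta-1}\|\chi_{Q}\|_{p'(\cdot)}$, Lemma \ref{lem.variable-homogeneous} gives $\|g\|_{q(\cdot)/\gamma} \approx \|f\|_{q(\cdot)}^{\gamma}$, and $(f^{\sharp}_{\delta(\cdot),\gamma})^{\gamma}$ coincides with the $a$-normalized $L^{1}$-sharp maximal function of $g$. Since Lemma \ref{lem:T8-condition} yields $a \in T_{\infty}$, Lemma \ref{lem:lip-equivalent} then allows us to interchange $L^{1}$- and $L^{s}$-oscillations freely with uniform constants. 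The problem therefore reduces to proving
\[
\|g\|_{\tilde q(\cdot)} \le C \, \|g^{\sharp}_{a}\|_{\tilde p(\cdot)}
\]
for $\tilde p(\cdot) = p(\cdot)/\gamma$ and $\tilde q(\cdot) = q(\cdot)/\gamma$, which still satisfy the fractional relation $1/\tilde p(\cdot) - 1/\tilde q(\cdot) = \gamma\delta(\cdot)/n$ and lie in $\mathscr{C}^{\log}(\R^{n}) \cap \mathscr{P}(\R^{n})$.

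Next I would dualize: by the generalized H\"older inequality (Lemma \ref{lem.holder}(1)) and the norm conjugation in variable Lebesgue spaces, $\|g\|_{\tilde q(\cdot)}$ is comparable to $\sup\bigl\{\int g\,h : \|h\|_{\tilde q'(\cdot)} \le 1\bigr\}$, where $h$ may be taken bounded and compactly supported by density. For each such $h$, perform a Calder\'on--Zygmund decomposition of $g$ at geometrically growing levels, yielding a nested family of stopping cubes on which averages telescope. Using the doubling property of $\|\chi_{Q}\|_{p'(\cdot)}$ (a consequence of \eqref{equ:character-norm-est} combined with \eqref{equ.double-condition-2}) together with the $T_{\infty}$-type control $|g_{Q'} - g_{Q}| \le C\,a(Q)\,g^{\sharp}_{a}(x)$ valid for $x \in Q' \subset Q$, one arrives at a pointwise bound of the form
\[
\int g\,h \;\le\; C \int g^{\sharp}_{a}(x)\,\mathcal{I}_{\tilde\alpha}(h)(x)\,dx
\]
for an appropriate fractional-integral-type operator $\mathcal{I}_{\tilde\alpha}$ whose index is dictated by the hypothesis $1/\beta - 1/r(\cdot) = \delta(\cdot)/n$.

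Finally, applying the variable-exponent generalized H\"older inequality in its product form (Lemma \ref{lem.holder}(3)) with the exponent decomposition coming from $1/\tilde q'(\cdot) = 1/\tilde p'(\cdot) - \gamma\delta(\cdot)/n$, and invoking the $L^{\tilde q'(\cdot)} \to L^{\tilde p'(\cdot)}$ boundedness of $\mathcal{I}_{\tilde\alpha}$ obtained by specializing Lemma \ref{lem. multilinear-fractional} to $m = 1$, one concludes $\|g\|_{\tilde q(\cdot)} \le C\,\|g^{\sharp}_{a}\|_{\tilde p(\cdot)}$; taking $\gamma$-th roots and undoing the substitution recovers the claimed $\|f\|_{q(\cdot)} \le C\|f^{\sharp}_{\delta(\cdot),\gamma}\|_{p(\cdot)}$. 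The hardest step will be the Calder\'on--Zygmund decomposition in the middle paragraph: assembling the stopping-cube contributions into precisely a fractional-integral operator of the correct index requires uniform control of $\|\chi_{Q}\|_{p'(\cdot)}$ across all dyadic scales and careful use of the log-H\"older regularity of $p(\cdot)$ and $r(\cdot)$, complicated by the fact that the ``fractional order'' $\delta(\cdot)$ itself varies with position.
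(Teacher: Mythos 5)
The paper does not prove this lemma --- it is cited verbatim from \cite[Lemma~4.11]{pradolini2017characterization}, where the argument goes through weighted norm inequalities and extrapolation rather than through a direct duality argument. There is therefore no proof in the paper to compare against; evaluating your proposal on its own terms, I see two gaps that I do not believe can be closed as written.

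First, the normalization goes wrong right after the substitution $g=|f|^{\gamma}$: raising $f^{\sharp}_{\delta(\cdot),\gamma}$ to the $\gamma$-th power leaves the denominator $a(Q)^{\gamma}=\bigl(|Q|^{1/\beta-1}\|\chi_{Q}\|_{p'(\cdot)}\bigr)^{\gamma}$, which is \emph{not} the functional naturally associated with $\tilde p(\cdot)=p(\cdot)/\gamma$. By \cref{lem.character-norm} one has $\|\chi_{Q}\|_{p'(\cdot)}^{\gamma}\approx|Q|^{\gamma/p'_{Q}}$ while $\|\chi_{Q}\|_{\tilde p'(\cdot)}\approx|Q|^{1-\gamma/p_{Q}}$, and these exponents agree only when $\gamma=1$. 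So the claimed identification of $(f^{\sharp}_{\delta(\cdot),\gamma})^{\gamma}$ with "the $a$-normalized $L^{1}$-sharp maximal function of $g$" is false, and the exponent arithmetic downstream inherits the discrepancy (your relation $1/\tilde q'=1/\tilde p'-\gamma\delta/n$ also has the wrong sign). Second, and more fundamentally, you invoke \cref{lem. multilinear-fractional} with $m=1$ to bound the operator you expect the Calder\'on--Zygmund decomposition to produce. But that lemma concerns a Riesz potential of \emph{constant} order $\alpha$, while the order dictated by $a(Q)\approx|Q|^{\delta_{Q}/n}$ is the position-dependent function $\gamma\delta(\cdot)$; a variable-order fractional integral is not covered by that lemma, and its boundedness on variable Lebesgue spaces is a genuinely different and subtler assertion. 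Finally, the central step --- that the telescoping sum over stopping cubes assembles into $\int g\,h\le C\int g^{\sharp}_{a}\,\mathcal I_{\tilde\alpha}(h)$ with an honest convolution kernel --- is asserted, not demonstrated: the standard dyadic Fefferman--Stein argument yields a (dyadic) maximal function of $h$, and because $a(Q)$ depends on the location of $Q$ and not only on its scale, it is far from clear that the contributions reorganize into a fractional integral of any fixed or variable order. You flag this as ``the hardest step,'' but it is precisely where a proof would have to live; the cited reference sidesteps it entirely by extrapolating from a weighted constant-exponent estimate.
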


The following result is a generalization to the variable context of a pointwise estimate of commutators. 

\begin{lemma} \label{lem.sharp-estimate-var}
Let $m\ge 2$, $0<\gamma<\eta<1/m$, $ \epsilon_{0}<\epsilon \le 1$, $p(\cdot)\in  \mathscr{C}^{\log}(\R^{n}) \cap\mathscr{P}(\R^{n})$, $1<\beta\le p_{-}$ such that  $0\le \delta(\cdot)/n= 1/\beta - 1/p(\cdot) \le 1$ and $\vec{b}=(b_{1},b_{2},\dots,b_{m})\in \mathbb{L}(\delta(\cdot)) \times\mathbb{L}(\delta(\cdot)) \times\cdots\times \mathbb{L}(\delta(\cdot)) $. Then there exists a positive constant $C$ such that
\begin{align*}
  (T_{_{b_{j}}} (\vec{f}))_{\delta(\cdot),\gamma}^{\sharp} (x)   &\le C   \|b_{j}\|_{\mathbb{L}(\delta(\cdot))} \Big(M_{\eta}(T\vec{f})(x)+ \mathcal{M}_{L(\log L)}(\vec{f})(x) \Big)  ~(j=1,2,\dots,m).
\end{align*}
Furthermore,
\begin{align*}
  \big( T_{_{\Sigma \vec{b}}}(\vec{f}) \big)_{\delta(\cdot),\gamma}^{\sharp} (x)
  &\le C \sum_{j=1}^{m}  \|b_{j}\|_{\mathbb{L}(\delta(\cdot))} \Big(M_{\eta}\big(T(\vec{f})\big)(x)+ \mathcal{M}_{L(\log L)}(\vec{f})(x) \Big).
\end{align*}
\end{lemma}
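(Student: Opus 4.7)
The plan is to adapt the P\'erez-type pointwise sharp-maximal estimate for commutators of multilinear Calder\'on-Zygmund operators (as in \cite{perez2014end,lerner2009new,lu2014multilinear}) to the variable Lipschitz setting, using \cref{lem:lip-relation} in place of the John--Nirenberg inequality for $\BMO$. Since the second assertion follows from the first by summing in $j$ and using the sublinearity of the sharp maximal, it suffices to treat a single $T_{b_j}$; write $b=b_j$ for brevity. Fix $x\in\R^n$ and a cube $Q\ni x$, set $Q^*=a^{k_0}Q$ and $c=b_{Q^*}$, and split each $f_i=f_i^0+f_i^\infty$ with $f_i^0=f_i\chi_{Q^*}$. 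From
\[
T_b(\vec f)(y)=(b(y)-c)T(\vec f)(y)-T(f_1,\ldots,(b-c)f_j,\ldots,f_m)(y),
\]
expand the second term into $2^m$ pieces according to $\vec\alpha=(\alpha_1,\ldots,\alpha_m)\in\{0,\infty\}^m$ and let $C_Q$ be the sum over $\vec\alpha\neq \vec 0$ of these pieces evaluated at the center $x_Q$. Using $\bigl||t|^\gamma-|s|^\gamma\bigr|\le|t-s|^\gamma$ for $0<\gamma\le 1$, the task reduces to bounding $a(Q)^{-1}\bigl(|Q|^{-1}\int_Q|T_b(\vec f)-C_Q|^\gamma\,dy\bigr)^{1/\gamma}$, where $a(Q):=|Q|^{1/\beta-1}\|\chi_Q\|_{p'(\cdot)}$, by the claimed right-hand side.

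The product piece $(b-c)T(\vec f)$ is handled by H\"older with exponents $r=\eta/\gamma>1$ and $r'$: the $T(\vec f)$ factor produces $M_\eta(T\vec f)(x)$, while the $b-c$ factor, via the Orlicz embedding $\bigl(|Q|^{-1}\int_Q|g|^s\bigr)^{1/s}\lesssim\|g\|_{\exp L,Q}$ together with \eqref{equ:lip-relation-1} and the $T_\infty$-doubling of $a$ (\cref{lem:T8-condition} combined with \eqref{equ:character-norm-est} and \eqref{equ.double-condition-2}), is bounded by $C\|b\|_{\mathbb{L}(\delta(\cdot))}a(Q)$. For the fully local piece $T((b-c)f_j^0,f_1^0,\ldots,f_m^0)(y)$, Kolmogorov's inequality (\cref{lem:kolmogorov}) with the pair $(\gamma,1/m)$, using $\gamma<1/m$, combined with the classical weak $(1,\ldots,1)\to L^{1/m,\infty}$ bound for $T$, dominates the $\gamma$-average by a product of $L^1$-averages of the $f_i$'s and $(b-c)f_j$; then \eqref{equ:holder-4} converts the $j$-th factor into $\|b-b_{Q^*}\|_{\exp L,Q^*}\|f_j\|_{L(\log L),Q^*}$, and \eqref{equ:lip-relation-1} together with \eqref{equ:mulmax-relation} completes this piece by $C\|b\|_{\mathbb{L}(\delta(\cdot))}a(Q)\mathcal{M}_{L(\log L)}(\vec f)(x)$.

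For the remaining mixed/non-local pieces (at least one $\alpha_i=\infty$), the subtracted $C_Q$ plays its role: write $T(\cdots)(y)-T(\cdots)(x_Q)$ using the kernel, and decompose the non-local arguments dyadically by $A_k:=a^{k_0(k+1)}Q\setminus a^{k_0 k}Q$, $k\ge 1$. Since at least one $y_i$ lies in $A_k$ and $y\in Q$, the inequality $|y-x_Q|\lesssim\ell(Q)\ll\max_{j'}|y-y_{j'}|$ holds, so the smoothness estimate \eqref{equ:CZK-2} applies and yields a gain $a^{-k_0 k\epsilon}$ per annulus over the pure size bound \eqref{equ:CZK-1}. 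Bounding the resulting $m$-fold integrals by the generalized H\"older inequality in Orlicz spaces, the $b-c$ factor contributes $\|b-b_{a^{k_0 k}Q}\|_{\exp L,a^{k_0 k}Q}+|b_{a^{k_0 k}Q}-b_{Q^*}|$, which by \eqref{equ:lip-relation-1}, \eqref{equ:lip-relation-2} and iterated doubling of $a$ is bounded by $Ck\|b\|_{\mathbb{L}(\delta(\cdot))}a(Q)$. After summing over $k$, each mixed piece is controlled by $C\|b\|_{\mathbb{L}(\delta(\cdot))}a(Q)\mathcal{M}_{L(\log L)}(\vec f)(x)$ times a convergent series $\sum_k k\,a^{-k_0 k\epsilon}$ times the polynomial growth supplied by \eqref{equ:character-norm-est}.

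The principal technical hurdle is this final step: arranging that the geometric decay $a^{-k_0 k\epsilon}$ from the kernel smoothness dominates the polynomial-in-$k$ growth coming from the iterated doubling of $a(\cdot)$, the factor $k$ in \eqref{equ:lip-relation-2}, and the growth of $\|\chi_{a^{k_0 k}Q}\|_{p'(\cdot)}$ through \eqref{equ:character-norm-est}. This is precisely why the hypotheses $\epsilon_0<\epsilon\le 1$, $1<\beta\le p_-$, $\delta(\cdot)/n=1/\beta-1/p(\cdot)\le 1$, and $\gamma<\eta<1/m$ are all required: the first two control the kernel-versus-weight trade-off, while the last two are exactly what is needed to apply Kolmogorov's inequality together with the subadditivity $||t|^\gamma-|s|^\gamma|\le|t-s|^\gamma$. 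Careful bookkeeping of exponents across the Orlicz H\"older, doubling, and variable Lipschitz estimates is the crux of the argument.
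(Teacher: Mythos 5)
Your proposal is correct and follows essentially the same route as the paper's proof: decompose $f_i=f_i^0+f_i^\infty$ on $Q^*=a^{k_0}Q$, subtract $c=b_{Q^*}$, split into the product term, the all-local term, and the mixed/non-local pieces, handle them respectively by H\"older plus a Lipschitz oscillation bound, Kolmogorov plus the weak $(1,\ldots,1)$ bound plus Orlicz H\"older, and an annular decomposition driven by the kernel smoothness, with the series converging because $\epsilon_0<\epsilon$ forces $a^{k_0(1-\epsilon)}<2$. The only cosmetic differences from the paper are that you take $C_Q$ to be the evaluation at the center $x_Q$ rather than the average over $Q$ (both are valid choices for the sharp function), and for the product piece you use the Orlicz embedding together with \eqref{equ:lip-relation-1} rather than \cref{lem:lip-equivalent}; neither change affects the argument.
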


\begin{proof}
Let $Q\subset \mathbb{R}^n$ and $x\in Q$. Due to the fact $\big| | a|^{\gamma} - | c|^{\gamma} \big| \le | a-c|^{\gamma}$ for $0<\gamma<1$, it is enough to show that, for some constant $C_{Q}$, there exists a positive constant $C$ such that
\begin{equation*}
  \dfrac{ \left(\dfrac{1}{|Q|}\dint_{Q} \Big| T_{_{b_{j}}}(\vec{f})(z) -C_{Q} \Big|^{\gamma} dz \right)^{1/\gamma}}{|Q|^{1/\beta-1} \|\chi_{Q}\|_{p'(\cdotp)} }\le C    \|b_{j}\|_{\mathbb{L}(\delta(\cdot))} \Big(M_{\eta}\big(T(\vec{f})\big)(x)+ \mathcal{M}_{L(\log L)}(\vec{f})(x) \Big).
\end{equation*}

For each $j$, we decompose $f_{j}=f_{j}^{0} + f_{j}^{\infty}$ with $f_{j}^{0}=f_{j} \chi_{a^{k_0}Q} $, where $a$ and $k_0$ are defined as in conditions \labelcref{equ:character-norm-est,equ.double-condition-2} respectively. Then
\begin{align*}
 \prod_{j=1}^{m} f_{j}(y_{j})   &=  \prod_{j=1}^{m} \Big( f_{j}^{0}(y_{j}) + f_{j}^{\infty}(y_{j}) \Big)
   =   \sum_{\alpha_{1},\dots,\alpha_{m}\in \{0,\infty\}}  f_{1}^{\alpha_{1}}(y_{1}) \cdots  f_{m}^{\alpha_{m}} (y_{m}) \\
  &=  \prod_{j=1}^{m} f_{j}^{0}(y_{j}) +  \sum_{(\alpha_{1},\dots,\alpha_{m})\in \ell} f_{1}^{\alpha_{1}}(y_{1}) \cdots  f_{m}^{\alpha_{m}} (y_{m}),
\end{align*}
where $\ell=\{ (\alpha_{1},\dots,\alpha_{m}): \hbox{there is at least one}~ \alpha_{j}\neq 0\}$.
 Let $\lambda$ be some positive constant to be chosen. It is easy to see that
\begin{align*}
 T_{_{b_{j}}}(\vec{f})(x)  &= (b_{j}(x)-\lambda) T(\vec{f})(x) -T(f_{1}^{0},\dots,(b_{j}-\lambda)f_{j}^{0} ,\dots, f_{m}^{0})(x) \\
 &\qquad - \sum_{(\alpha_{1},\dots,\alpha_{m})\in \ell} T(f_{1}^{\alpha_{1}},\dots,(b_{j}-\lambda)f_{j}^{\alpha_{j}} ,\dots, f_{m}^{\alpha_{m}})(x) .
\end{align*}

By taking $\lambda=(b_{j})_{a^{k_0}Q}$ and $C_{Q} = \sum\limits_{(\alpha_{1},\dots,\alpha_{m})\in \ell} \Big( T(f_{1}^{\alpha_{1}},\dots,(b_{j}-\lambda)f_{j}^{\alpha_{j}} ,\dots, f_{m}^{\alpha_{m}})  \Big)_{Q}$, we obtain that
\begin{equation*}
  \dfrac{ \left(\dfrac{1}{|Q|}\dint_{Q} \Big| T_{_{b_{j}}}(\vec{f})(z) -C_{Q} \Big|^{\gamma} dz \right)^{1/\gamma}}{|Q|^{1/\beta-1} \|\chi_{Q}\|_{p'(\cdotp)} }\le \dfrac{ C}{|Q|^{1/\beta-1} \|\chi_{Q}\|_{p'(\cdotp)} } \Big(I+II+{I\!I\!I} \Big),
\end{equation*}
where
\begin{align*}
 I  &= \left(\frac{1}{|Q|}\dint_{Q} \Big| (b_{j}(z)-\lambda) T(\vec{f})(z) \Big|^{\gamma} dz \right)^{1/\gamma}  \\
  II  &= \left(\frac{1}{|Q|}\dint_{Q} \Big| T(f_{1}^{0},\dots,(b_{j}-\lambda)f_{j}^{0} ,\dots, f_{m}^{0})(z) \Big|^{\gamma} dz \right)^{1/\gamma}  \\
{I\!I\!I}  &= \sum_{(\alpha_{1},\dots,\alpha_{m})\in \ell}  \bigg(\frac{1}{|Q|}\dint_{Q} \Big|T(f_{1}^{\alpha_{1}},\dots,(b_{j}-\lambda)f_{j}^{\alpha_{j}} ,\dots, f_{m}^{\alpha_{m}})(z)  \\
 &\hspace{6em} -  \Big( T(f_{1}^{\alpha_{1}},\dots,(b_{j}-\lambda)f_{j}^{\alpha_{j}} ,\dots, f_{m}^{\alpha_{m}})  \Big)_{Q} \Big|^{\gamma} dz \bigg)^{1/\gamma}   .
\end{align*}

Let us first estimate $I$. By taking $1<r<\eta/\gamma$ and using H\"{o}lder's inequality, we obtain that
\begin{align*}
 I  &\le C \left(\frac{1}{|Q|}\dint_{Q} \Big| (b_{j}(z)-(b_{j})_{a^{k_0}Q}) \Big|^{r'\gamma} dz \right)^{1/(r'\gamma)}  \left(\frac{1}{|Q|}\dint_{Q} \Big|  T(\vec{f})(z) \Big|^{r\gamma} dz \right)^{1/(r\gamma)} .
\end{align*}
It is know from \Cref{lem:T8-condition} that the functional $a(Q)=|Q|^{1/\beta-1} \|\chi_{Q}\|_{p'(\cdotp)}$ satisfies $T_{\infty}$ condition. Then, by \Cref{lem:lip-equivalent}, it can obtain that
\begin{align*}
 \dfrac{I}{|Q|^{1/\beta-1} \|\chi_{Q}\|_{p'(\cdotp)}}  &\le \dfrac{C \left(\dfrac{1}{|Q|}\dint_{Q} \Big| \Big(b_{j}(z)-(b_{j})_{a^{k_0}Q} \Big) \Big|^{r'\gamma} dz \right)^{1/(r'\gamma)} }{|Q|^{1/\beta-1} \|\chi_{Q}\|_{p'(\cdotp)}} \left(\frac{1}{|Q|}\dint_{Q} \Big|  T(\vec{f})(z) \Big|^{r\gamma} dz \right)^{1/(r\gamma)} \\
  &\le C \|b_{j}\|_{\mathbb{L}(\delta(\cdot))}  M_{r\gamma}\big(T(\vec{f})\big)(x)   \\
  &\le C \|b_{j}\|_{\mathbb{L}(\delta(\cdot))}  M_{\eta}\big(T(\vec{f})\big)(x).
\end{align*}

 To estimate $II$, note that $0<\gamma<1/m$, by using $L^{1}  \times L^{1}  \times\cdots\times L^{1}$   to $L^{1/m,\infty}$ boundedness of $T$, Kolmogorov's inequality (\cref{lem:kolmogorov}), H\"{o}lder's inequality (\ref{equ:holder-4}) with $t=1$  and condition \eqref{equ:mulmax-relation}, we have
 \begin{align*}
  II  &= \left(\frac{1}{|Q|}\dint_{Q} \Big| T(f_{1}^{0},\dots,(b_{j}-\lambda)f_{j}^{0} ,\dots, f_{m}^{0}) \Big|^{\gamma} dz \right)^{1/\gamma}  \\
   &\le \frac{C}{|Q|^{m}}\Big\| T(f_{1}^{0},\dots,(b_{j}-\lambda)f_{j}^{0} ,\dots, f_{m}^{0})(z) \Big\|_{L^{1/m,\infty}(Q)}  \\
 &\le \frac{C}{|Q|^{m}} \| (b_{j}-(b_{j})_{a^{k_0}Q})f_{j}^{0} \|_{L^{1}(\mathbb{R}^{n})} \prod_{k=1\atop k\neq j}^{m}\|f_{k}^{0}\|_{L^{1}(\mathbb{R}^{n})}  \\
 &\le \frac{C}{|Q|} \dint_{Q} | b_{j}(z)-(b_{j})_{a^{k_0}Q}| |f_{j}(z) | dz \prod_{k=1\atop k\neq j}^{m} \frac{1}{|Q|}  \dint_{Q} |f_{k}(z) | dz \\
 &\le C \| b_{j}-(b_{j})_{a^{k_0}Q}\|_{\exp L,a^{k_0}Q}  \|f_{j}\|_{L(\log L),a^{k_0}Q}  \prod_{k=1\atop k\neq j}^{m} \frac{1}{|a^{k_0}Q|}  \dint_{a^{k_0}Q} |f_{k}(z) | dz \\
  &\le C \| b_{j}-(b_{j})_{a^{k_0}Q}\|_{\exp L,a^{k_0}Q}  \mathcal{M}_{L(\log L)}^{j}(\vec{f}) (x)   \\
    &\le C \| b_{j}-(b_{j})_{a^{k_0}Q}\|_{\exp L,a^{k_0}Q}  \mathcal{M}_{L(\log L)}(\vec{f}) (x) .
\end{align*}

Thus, by \cref{lem:T8-condition,lem:lip-relation}  and doubling condition  implied in conditions \labelcref{equ:character-norm-est,equ.double-condition-2}, we get that
\begin{align*}
 \dfrac{II}{|Q|^{1/\beta-1} \|\chi_{Q}\|_{p'(\cdotp)}}  &\le \dfrac{C \| b_{j}-(b_{j})_{a^{k_0}Q}\|_{\exp L,a^{k_0}Q}}{|Q|^{1/\beta-1} \|\chi_{Q}\|_{p'(\cdotp)}}  \mathcal{M}_{L(\log L)}(\vec{f}) (x) \\
  &\le C\dfrac{ \| b_{j}-(b_{j})_{a^{k_0}Q}\|_{\exp L,a^{k_0}Q}}{|a^{k_0}Q|^{1/\beta-1} \|\chi_{a^{k_0}Q}\|_{p'(\cdotp)}}  \mathcal{M}_{L(\log L)}(\vec{f}) (x) \\
  &\le C \|b_{j}\|_{\mathbb{L}(\delta(\cdot))}  \mathcal{M}_{L(\log L)}(\vec{f}) (x).
\end{align*}

To estimate ${I\!I\!I}$, we consider first the case when $\alpha_{1}=\cdots =\alpha_{m}=\infty$. For any $z\in Q$, there has
\begin{align*}
&  \Big|T(f_{1}^{\infty},\dots,(b_{j}-(b_{j})_{a^{k_0}Q})f_{j}^{\infty} ,\dots, f_{m}^{\infty})(z)   -   \Big( T(f_{1}^{\infty},\dots,(b_{j}-(b_{j})_{a^{k_0}Q})f_{j}^{\infty} ,\dots, f_{m}^{\infty})  \Big)_{Q}   \Big|  \\
&\le \frac{1}{|Q|} \int_{Q}  \Big(\int_{(\mathbb{R}^{n}\setminus  a^{k_0}Q)^{m}}  |b_{j}(y_{j})-(b_{j})_{a^{k_0}Q}| |K(z,\vec{y})-K(w,\vec{y})| \prod_{i=1}^{m}| f_{i}^{\infty}(y_{i})|  d\vec{y} \Big) dw \\
&\le \frac{1}{|Q|} \int_{Q}  \Big( \sum_{k=1}^{\infty}\int_{(\mathcal {Q}_{k})^{m}}  |b_{j}(y_{j})-(b_{j})_{a^{k_0}Q}| |K(z,\vec{y})-K(w,\vec{y})| \prod_{i=1}^{m}| f_{i}^{\infty}(y_{i})|  d\vec{y} \Big) dw \\
&\le  \frac{1}{|Q|} \sum_{k=1}^{\infty} \Big( \int_{Q}\int_{(\mathcal {Q}_{k})^{m}}  |b_{j}(y_{j})-(b_{j})_{a^{k_0}Q}| |K(z,\vec{y})-K(w,\vec{y})| \prod_{i=1}^{m}| f_{i}^{\infty}(y_{i})|  d\vec{y} \Big) dw ,
\end{align*}
where $\mathcal {Q}_{k}= ( a^{k_0(k+1)}Q) \setminus (a^{k_0k}Q) $ for $k=1,2,\dots$. Note that, for $w,z\in Q$ and any $(y_{1},\dots,y_{m}) \in (\mathcal {Q}_{k})^{m}$, there has
\begin{align*}
a^{k_0k}l(Q)\le |z-y_{i}|< a^{k_0(k+1)}l(Q) & ~\hbox{and} ~ |z-w|\le  a^{k_0} l(Q),
\end{align*}
and applying \eqref{equ:CZK-2}, we have
\begin{align} \label{equ:CZK-2-1}
|K(z,\vec{y})-K(w,\vec{y})| &\le \dfrac{A|z-w|^{\epsilon}}{(|z-y_{1}|+\cdots+|z-y_{m}|)^{mn+\epsilon}}  
\le \dfrac{Ca^{-k_0k\epsilon}}{|a^{k_0k} Q |^{m}} .
\end{align}
Then
\begin{align} \label{equ:III-infty-1}
 & \Big|T(f_{1}^{\infty},\dots,(b_{j}-(b_{j})_{Q^{*}})f_{j}^{\infty} ,\dots, f_{m}^{\infty})(z)   -  \Big( T(f_{1}^{\infty},\dots,(b_{j}-(b_{j})_{a^{k_0}Q})f_{j}^{\infty} ,\dots, f_{m}^{\infty})  \Big)_{Q} \Big|    \notag \\
&\le  \frac{1}{|Q|} \sum_{k=1}^{\infty}\int_{Q} \Big( \int_{(\mathcal {Q}_{k})^{m}}  |b_{j}(y_{j})-(b_{j})_{a^{k_0}Q}| |K(z,\vec{y})-K(w,\vec{y})| \prod_{i=1}^{m}| f_{i}^{\infty}(y_{i})|  d\vec{y} \Big) dw    \notag \\
&\le  \frac{C}{|Q|} \sum_{k=1}^{\infty}  a^{-k_0k\epsilon}\int_{Q} \Big( \int_{(\mathcal {Q}_{k})^{m}}  |b_{j}(y_{j})-(b_{j})_{a^{k_0}Q}| \dfrac{1}{|a^{k_0k} Q |^{m}} \prod_{i=1}^{m}| f_{i}^{\infty}(y_{i})|  d\vec{y} \Big) dw    \notag \\
&\le C \sum_{k=1}^{\infty}  a^{-k_0k\epsilon} \int_{(\mathcal {Q}_{k})^{m}}  |b_{j}(y_{j})-(b_{j})_{a^{k_0}Q}| \dfrac{1}{|a^{k_0k} Q |^{m}} \prod_{i=1}^{m}| f_{i}^{\infty}(y_{i})|  d\vec{y}     \\
&\le C\sum_{k=1}^{\infty}  a^{-k_0k\epsilon} \Big( \frac{1}{|a^{k_0(k+1)}Q | } \int_{a^{k_0(k+1)}Q}  |b_{j}(y_{j})-(b_{j})_{a^{k_0}Q}| | f_{j}(y_{j})|  dy_{j} \Big)    \notag \\
 &\hspace{8em} \times \Big(\prod_{i=1\atop i\neq j}^{m}\frac{1}{|a^{k_0(k+1)}Q | } \int_{a^{k_0(k+1)}Q}  | f_{i} (y_{i})|  dy_{i} \Big).         \notag
 \end{align}

Therefore, by \cref{lem:lip-relation} and   the  generalized  H\"{o}lder's inequality in Orlicz space (see \Cref{equ:holder-4} with $t=1$), we have
\begin{align} \label{equ:III-infty-2}
 &\;   \frac{1}{|a^{k_0(k+1)}Q | } \int_{a^{k_0(k+1)}Q}  |b_{j}(y_{j})-(b_{j})_{a^{k_0}Q}| | f_{j}(y_{j})|  dy_{j}    \nonumber \\
 &\; = \frac{1}{|a^{k_0(k+1)}Q | } \int_{a^{k_0(k+1)}Q}  |b_{j}(y_{j})-(b_{j})_{a^{k_0(k+1)}Q}+(b_{j})_{a^{k_0(k+1)}Q} -(b_{j})_{a^{k_0}Q}| | f_{j}(y_{j})|  dy_{j}    \nonumber\\
 &\;  \le \frac{1}{|a^{k_0(k+1)}Q | } \int_{a^{k_0(k+1)}Q}  |b_{j}(y_{j})-(b_{j})_{a^{k_0(k+1)}Q}| | f_{j}(y_{j})|  dy_{j}     \nonumber \\
 &\; \hspace{12em} +\frac{|(b_{j})_{a^{k_0(k+1)}Q} -(b_{j})_{a^{k_0}Q}|}{|a^{k_0(k+1)}Q | } \int_{a^{k_0(k+1)}Q}   | f_{j}(y_{j})|  dy_{j}    \\
 &\; \le  C\| b_{j}-(b_{j})_{a^{k_0(k+1)}Q}\|_{\exp L,a^{k_0(k+1)}Q}   \|f_{j}\|_{L(\log L),a^{k_0(k+1)}Q}      \nonumber\\
 &\; \hspace{12em}   + |(b_{j})_{a^{k_0(k+1)}Q}-(b_{j})_{a^{k_0}Q}|   \|f_{j}\|_{L(\log L) ,a^{k_0(k+1)}Q}    \nonumber\\
 &\; \le  C \Big( \frac{\| b_{j}-(b_{j})_{a^{k_0(k+1)}Q}\|_{\exp L,a^{k_0(k+1)}Q}}{|a^{k_0(k+1)}Q|^{1/\beta-1} \|\chi_{a^{k_0(k+1)}Q}\|_{p'(\cdotp)} }  +   k  \|b_{j}\|_{\mathbb{L}(\delta(\cdot))} \Big)      \nonumber \\
 &\; \hspace{12em} \times   |a^{k_0(k+1)}Q|^{1/\beta-1} \|\chi_{a^{k_0(k+1)}Q}\|_{p'(\cdotp)}\|f_{j}\|_{L(\log L),a^{k_0(k+1)}Q}    \nonumber\\
 &\;  \le  C  (k+1)  \|b_{j}\|_{\mathbb{L}(\delta(\cdot))}   |a^{k_0(k+1)}Q|^{1/\beta-1} \|\chi_{a^{k_0(k+1)}Q}\|_{p'(\cdotp)}\|f_{j}\|_{L(\log L),a^{k_0(k+1)}Q}.     \nonumber
\end{align}

Since $0<\gamma<1$, by H\"{o}lder's inequality, \Cref{lem:lip-relation}, \labelcref{equ:character-norm-est,equ:mulmax-relation,equ:III-infty-1,equ:III-infty-2}, and the fact that $ 1/k_{0}=\epsilon_{0}<\epsilon \le 1$,  we have
\begin{align} \label{equ:III-infty-3}
& \dfrac{{I\!I\!I}_{\infty}}{|Q|^{1/\beta-1} \|\chi_{Q}\|_{p'(\cdotp)}}= \dfrac{1}{|Q|^{1/\beta-1} \|\chi_{Q}\|_{p'(\cdotp)}}  \bigg(\frac{1}{|Q|}\dint_{Q} \Big|T(f_{1}^{\infty},\dots,(b_{j}-(b_{j})_{Q^{*}})f_{j}^{\infty} ,\dots, f_{m}^{\infty})(z)      \notag\\
 &\hspace{6em} -   \Big( T(f_{1}^{\infty},\dots,(b_{j}-(b_{j})_{a^{k_0}Q})f_{j}^{\infty} ,\dots, f_{m}^{\infty})  \Big)_{Q}  \Big|^{\gamma} dz \bigg)^{1/\gamma}               \notag\\
  &\le \dfrac{1}{|Q|^{1/\beta-1} \|\chi_{Q}\|_{p'(\cdotp)}}  \bigg(\frac{C}{|Q|}\dint_{Q} \Big|T(f_{1}^{\infty},\dots,(b_{j}-(b_{j})_{Q^{*}})f_{j}^{\infty} ,\dots, f_{m}^{\infty})(z)   \notag\\
 & \hspace{6em} -   \Big( T(f_{1}^{\infty},\dots,(b_{j}-(b_{j})_{a^{k_0}Q})f_{j}^{\infty} ,\dots, f_{m}^{\infty})  \Big)_{Q} \Big|  dz \bigg)   \notag\\
 &\le \bigg(\frac{C}{|Q|}\dint_{Q} \sum_{k=1}^{\infty}  a^{-k_0k\epsilon} \Big( \frac{1}{|a^{k_0(k+1)}Q | } \int_{a^{k_0(k+1)}Q}  |b_{j}(y_{j})-(b_{j})_{a^{k_0}Q}| | f_{j}(y_{j})|  dy_{j} \Big)   \notag\\
 &\hspace{8em} \times \Big(\prod_{i=1\atop i\neq j}^{m}\frac{1}{|a^{k_0(k+1)}Q | } \int_{a^{k_0(k+1)}Q}  | f_{i} (y_{i})|  dy_{i} \Big)  dz \bigg)   \dfrac{1}{|Q|^{1/\beta-1} \|\chi_{Q}\|_{p'(\cdotp)}}   \notag\\
&\le \dfrac{C \|b_{j}\|_{\mathbb{L}(\delta(\cdot))} }{|Q|^{1/\beta-1} \|\chi_{Q}\|_{p'(\cdotp)}}\bigg(  \sum_{k=1}^{\infty}  a^{-k_0k\epsilon} (k+1) |a^{k_0(k+1)}Q|^{1/\beta-1} \|\chi_{a^{k_0(k+1)}Q}\|_{p'(\cdotp)}   \\
 &\hspace{8em} \times \|f_{j}\|_{L(\log L),a^{k_0(k+1)}Q}\Big(\prod_{i=1\atop i\neq j}^{m}\frac{1}{|a^{k_0(k+1)}Q | } \int_{a^{k_0(k+1)}Q}  | f_{i} (y_{i})|  dy_{i} \Big)   \bigg)    \notag\\
&\le C \|b_{j}\|_{\mathbb{L}(\delta(\cdot))}  \mathcal{M}_{L(\log L)}^{j}(\vec{f}) (x) \Big(  \sum_{k=1}^{\infty}  a^{-k_0k\epsilon} (k+1) \dfrac{ |a^{k_0(k+1)}Q|^{1/\beta-1} \|\chi_{a^{k_0(k+1)}Q}\|_{p'(\cdot)} }{|Q|^{1/\beta-1} \|\chi_{Q}\|_{p'(\cdotp)}}   \Big)   \notag\\
&\le C \|b_{j}\|_{\mathbb{L}(\delta(\cdot))}  \mathcal{M}_{L(\log L)}(\vec{f}) (x) \Big(  \sum_{k=1}^{\infty}  a^{-k_0k\epsilon} (k+1) a^{k_0kn(1/\beta-1)}\dfrac{ \|\chi_{a^{k_0(k+1)}Q}\|_{p'(\cdot)} }{  \|\chi_{Q}\|_{p'(\cdotp)}}   \Big)   \notag\\
&\le C \|b_{j}\|_{\mathbb{L}(\delta(\cdot))}  \mathcal{M}_{L(\log L)}(\vec{f}) (x) \Big(  \sum_{k=1}^{\infty}  a^{-k_0k\epsilon} (k+1) a^{k_0kn(1/\beta-1)}\dfrac{ a^{k_0k(n-n/\beta+1)} }{  2^{k}}   \Big)  \notag \\
&\le C \|b_{j}\|_{\mathbb{L}(\delta(\cdot))}  \mathcal{M}_{L(\log L)}(\vec{f}) (x) \sum_{k=1}^{\infty}   (k+1)  \Big( \dfrac{ a^{k_0(1-\epsilon)} }{  2}   \Big)^{k}   \notag\\
&\le C \|b_{j}\|_{\mathbb{L}(\delta(\cdot))}  \mathcal{M}_{L(\log L)}(\vec{f}) (x) .     \notag
 \end{align}

From estimates \labelcref{equ:III-infty-1,equ:III-infty-2,equ:III-infty-3}, there holds the following inequality, which will be used later,
 \begin{align} \label{equ:III-infty-4}
\begin{aligned}
 &\dfrac{1}{|Q|^{1/\beta-1} \|\chi_{Q}\|_{p'(\cdotp)}}\bigg(  \sum_{k=1}^{\infty}  a^{-k_0k\epsilon} \Big( \int_{(a^{k_0(k+1)}Q)^{m}} \frac{|b_{j}(y_{j})-(b_{j})_{a^{k_0}Q}| | f_{j}(y_{j})|}{|a^{k_0(k+1)}Q |^{m} }  \prod_{i=1\atop i\neq j}^{m}   | f_{i} (y_{i})|  d\vec{y} \Big)  \bigg)   \\
&\le C \|b_{j}\|_{\mathbb{L}(\delta(\cdot))}  \mathcal{M}_{L(\log L)}(\vec{f}) (x) .
\end{aligned}
 \end{align}

 Now, for $(\alpha_{1},\dots,\alpha_{m})\in \ell$, let us consider the terms $ {I\!I\!I}_{\alpha_{1},\dots,\alpha_{m}}$ such that at least one
 $\alpha_{j}=0$ and  one  $\alpha_{i}=\infty$. Without loss of generality, we assume that  $ \alpha_{1}=\cdots=\alpha_{l}=0$ and $\alpha_{l+1}=\cdots=\alpha_{m}=\infty$ with $1\le l<m$. For any $z\in Q$, set $\mathcal {Q}_{k}= ( a^{k_0(k+1)}Q) \setminus (a^{k_0k}Q) $ as above, when $l+1\le j \le m$, applying \eqref{equ:CZK-2-1}, we obtain that
\begin{align*}
&  \Big|T(f_{1}^{\alpha_{1}},\dots,(b_{j}-(b_{j})_{a^{k_0}Q})f_{j}^{\alpha_{j}} ,\dots, f_{m}^{\alpha_{m}})(z)   -   \Big( T(f_{1}^{\alpha_{1}},\dots,(b_{j}-(b_{j})_{a^{k_0}Q})f_{j}^{\alpha_{j}} ,\dots, f_{m}^{\alpha_{m}})  \Big)_{Q}   \Big|  \\
&\le \frac{1}{|Q|} \int_{Q}  \Big(\int_{(\mathbb{R}^{n})^{m}}  |b_{j}(y_{j})-(b_{j})_{a^{k_0}Q}| |K(z,\vec{y})-K(w,\vec{y})|  \prod_{i=1}^{m}| f_{i}^{\alpha_{i}}(y_{i})|  d\vec{y} \Big) dw \\
&\le \frac{1}{|Q|} \int_{Q}  \Big( \int_{( a^{k_0}Q)^{l}\times (\mathbb{R}^{n}\setminus  a^{k_0}Q)^{m-l}}  |b_{j}(y_{j})-(b_{j})_{a^{k_0}Q}| |K(z,\vec{y})-K(w,\vec{y})|    \\
& \hspace{10em} \times \prod_{i=1}^{l}| f_{i}^{0}(y_{i})| \prod_{i=l+1}^{m}| f_{i}^{\infty}(y_{i})|  d\vec{y} \Big)dw \\
&\le \frac{1}{|Q|} \int_{Q} \bigg( \int_{( a^{k_0}Q)^{l}}\prod_{i=1}^{l}| f_{i}^{0}(y_{i})|  \sum_{k=1}^{\infty}\int_{(\mathcal {Q}_{k})^{m-l}} |b_{j}(y_{j})-(b_{j})_{a^{k_0}Q}|    \\
& \hspace{10em} \times |K(z,\vec{y})-K(w,\vec{y})|\prod_{i=l+1}^{m}| f_{i}^{\infty}(y_{i})|  d\vec{y} \bigg) dw \\
&\le \frac{C}{|Q|} \int_{Q} \bigg( \int_{( a^{k_0}Q)^{l}}\prod_{i=1}^{l}| f_{i}^{0}(y_{i})|  \sum_{k=1}^{\infty} a^{-k_0k\epsilon}\int_{(\mathcal {Q}_{k})^{m-l}} \frac{|b_{j}(y_{j})-(b_{j})_{a^{k_0}Q}| }{|a^{k_0k} Q |^{m}}   \prod_{i=l+1}^{m}| f_{i}^{\infty}(y_{i})|  d\vec{y} \bigg) dw   \\
&\le C \sum_{k=1}^{\infty} a^{-k_0k\epsilon}\bigg( \int_{(a^{k_0(k+1)}Q)^{m}} \frac{|b_{j}(y_{j})-(b_{j})_{a^{k_0}Q}| }{|a^{k_0(k+1)}Q |^{m}}   \prod_{i=1}^{m}| f_{i} (y_{i})|  d\vec{y} \bigg)   \\
&\le C \sum_{k=1}^{\infty} a^{-k_0k\epsilon}\bigg( \int_{(a^{k_0(k+1)}Q)^{m}} \frac{|b_{j}(y_{j})-(b_{j})_{a^{k_0}Q}|  |f_{j}(y_{j})|}{|a^{k_0(k+1)}Q |^{m}}   \prod_{i=1 \atop i\neq j}^{m}| f_{i} (y_{i})|  d\vec{y} \bigg).
\end{align*}

When $1\le j \le l$,  similar to the above, we have that 
\begin{align*}
&   \Big|T(f_{1}^{\alpha_{1}},\dots,(b_{j}-(b_{j})_{a^{k_0}Q})f_{j}^{\alpha_{j}} ,\dots, f_{m}^{\alpha_{m}})(z)   -   \Big( T(f_{1}^{\alpha_{1}},\dots,(b_{j}-(b_{j})_{a^{k_0}Q})f_{j}^{\alpha_{j}} ,\dots, f_{m}^{\alpha_{m}})  \Big)_{Q}   \Big|  \\
&\le C \sum_{k=1}^{\infty} a^{-k_0k\epsilon}\bigg( \int_{(a^{k_0(k+1)}Q)^{m}} \frac{|b_{j}(y_{j})-(b_{j})_{a^{k_0}Q}|  |f_{j}(y_{j})|}{|a^{k_0(k+1)}Q |^{m}}   \prod_{i=1 \atop i\neq j}^{m}| f_{i} (y_{i})|  d\vec{y} \bigg).
\end{align*}

Since $0<\gamma<1$, by H\"{o}lder's inequality and \eqref{equ:III-infty-4}, we have
\begin{align*}
& \dfrac{{I\!I\!I}_{\alpha_{1},\dots,\alpha_{m}}}{|Q|^{1/\beta-1} \|\chi_{Q}\|_{p'(\cdotp)}}= \dfrac{1}{|Q|^{1/\beta-1} \|\chi_{Q}\|_{p'(\cdotp)}}  \bigg(\frac{1}{|Q|}\dint_{Q} \Big|T(f_{1}^{\alpha_{1}},\dots,(b_{j}-(b_{j})_{a^{k_0}Q})f_{j}^{\alpha_{j}} ,\dots, f_{m}^{\alpha_{m}})(z)   \\
 &\hspace{6em}  -   \Big( T(f_{1}^{\alpha_{1}},\dots,(b_{j}-(b_{j})_{a^{k_0}Q})f_{j}^{\alpha_{j}} ,\dots, f_{m}^{\alpha_{m}})  \Big)_{Q}  \Big|^{\gamma} dz \bigg)^{1/\gamma}  \\
  &\le \dfrac{1}{|Q|^{1/\beta-1} \|\chi_{Q}\|_{p'(\cdotp)}}  \bigg(\frac{C}{|Q|}\dint_{Q} \Big|T(f_{1}^{\alpha_{1}},\dots,(b_{j}-(b_{j})_{a^{k_0}Q})f_{j}^{\alpha_{j}} ,\dots, f_{m}^{\alpha_{m}})(z)  \\
 & \hspace{6em}  -   \Big( T(f_{1}^{\alpha_{1}},\dots,(b_{j}-(b_{j})_{a^{k_0}Q})f_{j}^{\alpha_{j}} ,\dots, f_{m}^{\alpha_{m}})  \Big)_{Q} \Big|  dz \bigg)   \\
&\le   \dfrac{C}{|Q|^{1/\beta-1} \|\chi_{Q}\|_{p'(\cdotp)}}\bigg(  \sum_{k=1}^{\infty}  a^{-k_0k\epsilon} \Big( \int_{(a^{k_0(k+1)}Q)^{m}} \frac{|b_{j}(y_{j})-(b_{j})_{a^{k_0}Q}| | f_{j}(y_{j})|}{|a^{k_0(k+1)}Q |^{m} }  \prod_{i=1\atop i\neq j}^{m}   | f_{i} (y_{i})|  d\vec{y} \Big)  \bigg)                  \\
&\le C \|b_{j}\|_{\mathbb{L}(\delta(\cdot))}  \mathcal{M}_{L(\log L)}(\vec{f}) (x) .
 \end{align*}

Combining the above estimates we get the desired result. The proof is completed.
\end{proof}




\section{Main results and their proofs}

 Our main result can be stated as follows.

 The following theorem gives a characterization of the spaces $\mathbb{L}(\delta)$ in terms of the
boundedness of $T_{_{\Sigma \vec{b}}}$ between variable Lebesgue spaces.

\begin{theorem}\label{thm.1-0}
Suppose that $0< \delta<1$, $p_{_{1}}(\cdot),p_{_{2}}(\cdot),\dots,p_{_{m}}(\cdot)\in \mathscr{C}^{\log}(\R^{n}) \cap\mathscr{P}(\R^{n})$ satisfy $\frac{1}{p(x)}= \frac{1}{p_{_{1}}(x)} + \frac{1}{p_{_{2}}(x)}+\cdots+\frac{1}{p_{_{m}}(x)}$, and  $\frac{mn}{\delta+n} <(p_{j})_{-}<(p_{j})_{+}< \frac{mn}{\delta} ~(j=1,2,\dots,m)$. Define the variable exponent $q(\cdot)$ by
$$\frac{1}{q(x)}=\frac{1}{p(x)}-\frac{\delta}{n}.$$
Then $\vec{b}=(b_{1},b_{2},\dots,b_{m})\in \mathbb{L}(\delta) \times\mathbb{L}(\delta) \times\cdots\times \mathbb{L}(\delta) $ if and only if $T_{_{b_{j}}}: L^{p_1(\cdot)}(\mathbb{R}^{n}) \times L^{p_2(\cdot)}(\mathbb{R}^{n}) \times\cdots\times L^{p_m(\cdot)}(\mathbb{R}^{n}) \to L^{q(\cdot)}(\mathbb{R}^{n}) ~(j=1,2,\dots,m)$.

\end{theorem}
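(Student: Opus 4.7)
The proof splits into two directions, the sufficiency being a sharp--maximal argument and the necessity being a Janson--style test--function argument, both carried out in the variable--exponent framework assembled in Section 2.

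\smallskip
\textbf{Sufficiency.} Assume $\vec b\in\mathbb L(\delta)^{m}$. Fix a constant auxiliary exponent $r>n/(n-\delta)$ and set $\beta=r/(1+r\delta/n)>1$, so that the constant function $\delta(\cdot)\equiv\delta$ satisfies $\delta/n=1/\beta-1/r$, the compatibility condition of \cref{def.pointwise-sharp}(2) and of \cref{lem.sharp-estimate-var-1,lem.sharp-estimate-var} when the auxiliary variable exponent is the constant $r$. With these choices the denominator in \cref{def.lip-space}(4) becomes $|B|^{1/\beta+1/r'}=|B|^{1+\delta/n}$, so
\[
\|b_j\|_{\mathbb L(\delta(\cdot))}=\|b_j\|_{\mathbb L(\delta)}.
\]
The theorem's hypothesis $1/q(\cdot)=1/p(\cdot)-\delta/n$ is exactly the constraint demanded by \cref{lem.sharp-estimate-var-1}, which therefore gives $\|T_{b_j}(\vec f)\|_{q(\cdot)}\le C\bigl\|(T_{b_j}(\vec f))^{\sharp}_{\delta,\gamma}\bigr\|_{p(\cdot)}$ for small $\gamma$. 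Inserting the pointwise bound of \cref{lem.sharp-estimate-var} reduces the task to estimating
\[
\|M_\eta(T\vec f)\|_{p(\cdot)}+\|\mathcal M_{L(\log L)}(\vec f)\|_{p(\cdot)}.
\]
Choose $\eta$ small enough that $p(\cdot)/\eta\in\mathscr B(\R^n)$ (possible since $p_{-}>1$ via \cref{lem.variable-proposition-B,lem.variable-homogeneous}); then the first term is controlled by $\|T\vec f\|_{p(\cdot)}$, bounded by $\prod_i\|f_i\|_{p_i(\cdot)}$ using the $m$--linear analogue of \cref{lem.CZO-bound}. For the second, the pointwise majorization $\mathcal M_{L(\log L)}(\vec f)(x)\lesssim\prod_i M^2 f_i(x)$ recorded after \cref{def.mul-max}, together with the generalized Hölder inequality \cref{lem.holder}(3) and two applications of the maximal bound, yields the same product.

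\smallskip
\textbf{Necessity.} Fix $j$ and assume $T_{b_j}$ is bounded. We must prove a uniform bound on $|Q|^{-1-\delta/n}\int_Q|b_j-(b_j)_Q|\,dy$. The plan is the classical adjoint--cube construction: given $Q=Q(x_0,l)$, exploit a non--degeneracy property of $K$ (either a hypothesis customary in this kind of characterization, or the convolution form together with a non--vanishing--symbol assumption) to produce a cube $\widetilde Q$ disjoint from $Q$ with $\mathrm{dist}(\widetilde Q,Q)\approx l$ and $l(\widetilde Q)\approx l$, such that $K(x,\vec y)$ has constant sign and $|K(x,\vec y)|\approx |Q|^{-m}$ whenever $x\in\widetilde Q$ and $\vec y\in Q^{m}$. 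Choose $f_i=\chi_Q$ for $i\ne j$ and $f_j(y)=\sgn\bigl(b_j(y)-(b_j)_Q\bigr)\chi_Q(y)$. Writing
\[
T_{b_j}(\vec f)(x)=\int_{Q^{m}}\bigl(b_j(x)-b_j(y_j)\bigr)K(x,\vec y)\prod_i f_i(y_i)\,d\vec y
\]
and splitting $b_j(x)-b_j(y_j)=(b_j(x)-(b_j)_Q)-(b_j(y_j)-(b_j)_Q)$, the second summand combined with the sign choice of $f_j$ and the constant sign of $K$ yields, for every $x\in\widetilde Q$,
\[
|T_{b_j}(\vec f)(x)|\gtrsim |Q|^{1-m}\int_Q|b_j(y)-(b_j)_Q|\,dy,
\]
modulo a first--summand remainder of size $|b_j(x)-(b_j)_Q|$ absorbable after taking $L^{q(\cdot)}$ norms.

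\smallskip
\textbf{Finishing step and main obstacle.} Multiply the pointwise lower bound by $\chi_{\widetilde Q}(x)$, take the $L^{q(\cdot)}$--norm, and invoke the commutator hypothesis $\|T_{b_j}(\vec f)\|_{q(\cdot)}\le C\prod_i\|\chi_Q\|_{p_i(\cdot)}$. Evaluating $\|\chi_{\widetilde Q}\|_{q(\cdot)}$ and the $\|\chi_Q\|_{p_i(\cdot)}$ by parts (5)--(6) of \cref{lem.character-norm}, and averaging $1/q(\cdot)=\sum_i 1/p_i(\cdot)-\delta/n$ over $Q$ to get $1/q_Q=\sum_i 1/(p_i)_Q-\delta/n$, the characteristic--function norms telescope to exactly $|Q|^{1+\delta/n}$, delivering the uniform bound
\[
\frac{1}{|Q|^{1+\delta/n}}\int_Q|b_j-(b_j)_Q|\,dy\le C\,\|T_{b_j}\|_{\mathrm{op}}.
\]
The hard part will be precisely this variable--exponent bookkeeping: the norms $\|\chi_Q\|_{p_i(\cdot)}$ behave differently for small and large cubes, so one must treat the regimes $|Q|\le 1$ and $|Q|\ge 1$ separately and use the $\log$--Hölder continuity in an essential way to keep constants independent of $Q$. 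A secondary technical point is arranging the non--degenerate cube $\widetilde Q$ at the $m$--linear level, which in the convolution setting follows from homogeneity and non--vanishing of a Fourier symbol, and in general needs a mild non--degeneracy hypothesis on $K$.
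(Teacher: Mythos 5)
Your proof goes in genuinely different directions from the paper's in both halves, and each half contains a gap.

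For the implication $\vec b\in\mathbb L(\delta)^m\Rightarrow$ boundedness, the paper does \emph{not} run the $\delta(\cdot)$-sharp maximal machinery. Instead it uses the Lipschitz condition and the size estimate \eqref{equ:CZK-1} to get the pointwise domination $|T_{b_j}(\vec f)(x)|\le C\|b_j\|_{\mathbb L(\delta)}\mathcal I_\delta(|f_1|,\dots,|f_m|)(x)$ and then invokes the variable-exponent boundedness of $\mathcal I_\delta$ (\cref{lem. multilinear-fractional}). Your route — embedding $\delta$ as a constant $\delta(\cdot)$ and appealing to \cref{lem.sharp-estimate-var-1,lem.sharp-estimate-var} — is the route the paper reserves for \cref{thm.2-0}, and it carries the a priori finiteness requirement $\|T_{b_j}(\vec f)\|_{q(\cdot)}<\infty$ that \cref{lem.sharp-estimate-var-1} demands. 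In \cref{thm.2-0} this finiteness is part of the hypothesis; in \cref{thm.1-0} it is not, so as written your argument is circular on this point. One could patch it by proving a priori finiteness with the $\mathcal I_\delta$ bound first, but then the sharp maximal detour becomes superfluous: the direct $\mathcal I_\delta$ argument already gives the full conclusion, which is why the paper uses it. (Your reduction $\beta=r/(1+r\delta/n)$ so that $\|b_j\|_{\mathbb L(\delta(\cdot))}=\|b_j\|_{\mathbb L(\delta)}$ is correct, for what it is worth.)

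For the implication boundedness $\Rightarrow\vec b\in\mathbb L(\delta)^m$, the paper runs the multilinear Coifman–Rochberg–Weiss argument: expand $1/K(\vec z)$ as an absolutely convergent Fourier series on a ball away from the singularity (this tacitly assumes $K$ is a homogeneous convolution kernel), produce test functions $f_{jk}=\mathrm e^{-\mathi\epsilon v_{jk}\cdot/l}\chi_{Q_j^*}$ supported on translated cubes $Q_j^*$, and linearize $\int_Q|b_1-(b_1)_{Q_1^*}|$ against $K$. The norms $\|\chi_{Q_j^*}\|_{p_j(\cdot)}\|\chi_Q\|_{q'(\cdot)}$ are then estimated using $\mathcal A_{p(\cdot),q(\cdot)}^\gamma$ and \cref{lem.weight-apq-1}. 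Your adjoint-cube/constant-sign argument is a different non-degeneracy mechanism, and the step where you split $b_j(x)-b_j(y_j)=(b_j(x)-(b_j)_Q)-(b_j(y_j)-(b_j)_Q)$ and call the first summand ``absorbable'' is not justified: with your sign choice $f_j=\sgn(b_j-(b_j)_Q)\chi_Q$, the first-summand contribution is $(b_j(x)-(b_j)_Q)\cdot K\text{-weighted mean of }\sgn(b_j-(b_j)_Q)$, and there is no lower or upper bound that lets you drop it without further work (this is exactly the difficulty the $1/K$ Fourier trick is designed to avoid). Also note the paper oscillates $b_1$ against the shifted average $(b_1)_{Q_1^*}$, not $(b_1)_Q$, and this matters for the telescoping of the characteristic-function norms. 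Finally, a small slip: with $f_i=\chi_Q$ for $i\ne j$ and $|K|\approx|Q|^{-m}$ on $\widetilde Q\times Q^m$, the lower bound reads $|Q|^{-1}\int_Q|b_j-(b_j)_Q|$, not $|Q|^{1-m}\int_Q|b_j-(b_j)_Q|$.
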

%
%
%

\begin{proof}
Without loss of generality, we only consider the case that $m=2$. Actually, similar procedure work for all $m\in \mathbb{N}$.

$\xLongleftarrow{\ \ \ \ }$:\  We first prove that the condition is sufficient.
Assume that $T_{_{b_{j}}} ~(j=1,2)$ maps $L^{p_1(\cdot)}(\mathbb{R}^{n}) \times L^{p_2(\cdot)}(\mathbb{R}^{n})$ into $ L^{q(\cdot)}(\mathbb{R}^{n})$. Since $1/K(\vec{z})$ is infinitely differentiable in almost every $\vec{z}=(z_1,z_2)\in \mathbb{R}^{n}\times \mathbb{R}^{n}$. Consequently, note that the homogeneity of $K$, we may choose $\vec{z}_{_{0}}=(z_{_{10}},z_{_{20}})\in \mathbb{R}^{n}\backslash \{0\} \times \mathbb{R}^{n} \backslash \{0\} $ 
and $\epsilon>0$ satisfy that  $|z_{_{j}}-z_{_{j0}}|< \epsilon \sqrt{n} ~(j=1,2)$ which implies that $|\vec{z}-\vec{z}_{_{0}}|< \epsilon \sqrt{2n}$. Then  $1/K(\vec{z})$,  on the ball $B(\vec{z}_{_{0}},\epsilon \sqrt{2n})\subset \mathbb{R}^{2n} $ ,  can be expanded as an absolutely convergent Fourier series
$$\dfrac{1}{K(\vec{z})}=\dfrac{1}{K(z_1,z_2)}= \sum_{k=0}^{\infty} a_{k} \mathrm{e}^{\mathi\vec{v}_{k}\cdot \vec{z}} = \sum_{k=0}^{\infty} a_{k} \mathrm{e}^{\mathi(v_{1k},v_{2k})\cdot(z_1,z_2)}, $$
where the individual vectors $\vec{v}_{k} = (v_{1k},v_{2k}) \in \mathbb{R}^{n} \times \mathbb{R}^{n}$ do not play any significant role in the proof.

Set  ${\vec{z}}^{^{*}} =(z_{1}^{*},z_{2}^{*})$. If $z_{j}^{*} =\epsilon^{-1} z_{j0}$, then $|z_j-z_{j}^{*}|<  \sqrt{n}$ implies that  $|\epsilon z_j-z_{j0}|< \epsilon \sqrt{n}~(j=1,2)$. Thus, by the homogeneity of $K$, for all $\vec{z}\in B({\vec{z}}^{^{*}},\sqrt{2n})\subset \mathbb{R}^{2n} $, we can obtain that
$$\dfrac{1}{K(\vec{z})}=  \dfrac{\epsilon^{-2n}}{K(\epsilon\vec{z})}=\epsilon^{-2n} \sum_{k=0}^{\infty} a_{k} \mathrm{e}^{\mathi\epsilon\vec{v}_{k}\cdot \vec{z}}. $$

Let $Q=Q(x_0,l)$ be an arbitrary cube in $\mathbb{R}^{n}$ with sides parallel to the coordinate axes, diameter $l$ and center $x_0$, and set $y_{j0} = x_0-l z_{j}^*$ and $Q_{j}^{*} = Q(y_{j0},l) \subset \mathbb{R}^{n} ~(j=1,2)$. By taking $x\in Q, y_j\in Q_{j}^{*}~(j=1,2)$, we have that

\begin{align*}
\left|\dfrac{x-y_j}{l}- z_{j}^*\right|  &= \left|\dfrac{x-x_0+x_0-y_{j0}+y_{j0}-y_j}{l}- z_{j}^*\right|    \\
& \le \left|\dfrac{x-x_0}{l}\right| +\left|\dfrac{y_j-y_{j0}}{l}\right|  \le \sqrt{n} .  
\end{align*}
So we have $|(\frac{x-y_1}{l},\frac{x-y_2}{l})-{\vec{z}}^{^{*}}|= |(\frac{x-y_1}{l},\frac{x-y_2}{l})-(z_{1}^{*},z_{2}^{*})|\le \sqrt{2n} $, that is, $(\frac{x-y_1}{l},\frac{x-y_2}{l})\in B({\vec{z}}^{^{*}},\sqrt{2n})$, which means that $(x-y_1,x-y_2)$ is bounded away from the singularity of $K$.

Without loss of generality, let $s_1(x)=\sgn(b_1(x)-(b_1)_{Q_{1}^{*}})$, then
\begin{align} \label{equ:thm1-1}
\dint_Q &|b_{1}(x)-(b_1)_{Q_{1}^{*}}| dx= \dint_Q (b_{1}(x)-(b_1)_{Q_{1}^{*}})s_1(x) dx     \notag\\
 &=  \dint_Q \Big(\dfrac{1}{|Q_{1}^{*}|}\dint_{Q_{1}^{*}}(b_{1}(x)-b_1(y_1)) dy_1 \Big)\Big(\dfrac{1}{|Q_{2}^{*}|}\dint_{Q_{2}^{*}} \chi_{_{Q_{2}^{*}}}(y_2)  dy_2 \Big)s_1(x) dx               \notag\\
 &= \dfrac{1}{|Q_{1}^{*}|} \dfrac{1}{|Q_{2}^{*}|} \dint_Q  \dint_{Q_{1}^{*}}\dint_{Q_{2}^{*}} (b_{1}(x)-b_1(y_1))  s_1(x) dy_2 dy_1  dx          \notag  \\
 &= l^{-2n} \dint_{\mathbb{R}^{n}}  \dint_{\mathbb{R}^{n}}\dint_{\mathbb{R}^{n}} (b_{1}(x)-b_1(y_1)) \chi_{_{Q_{1}^{*}}}(y_1) \chi_{_{Q_{2}^{*}}}(y_2) \chi_{_{Q}}(x) s_1(x) dy_1 dy_2  dx           \notag \\
 &= l^{-2n} \dint_{\mathbb{R}^{n}}  \dint_{\mathbb{R}^{n}}\dint_{\mathbb{R}^{n}} (b_{1}(x)-b_1(y_1)) \dfrac{l^{2n} K(x-y_1,x-y_2)}{K(\frac{x-y_1}{l},\frac{x-y_2}{l})} \\
 &\hspace{8em}   \times \chi_{_{Q_{1}^{*}}}(y_1) \chi_{_{Q_{2}^{*}}}(y_2) \chi_{_{Q}}(x) s_1(x) dy_1 dy_2  dx
             \notag \\
 &= \epsilon^{-2n}  \dint_{\mathbb{R}^{n}}  \dint_{\mathbb{R}^{n}}\dint_{\mathbb{R}^{n}}  (b_{1}(x)-b_1(y_1))  K(x-y_1,x-y_2)  \left(\sum_{k=0}^{\infty} a_{k} \mathrm{e}^{\mathi\frac{\epsilon}{l}(v_{1k},v_{2k})\cdot(x-y_1,x-y_2)} \right)    \notag\\
 &\hspace{8em}   \times  \chi_{_{Q_{1}^{*}}}(y_1) \chi_{_{Q_{2}^{*}}}(y_2) \chi_{_{Q}}(x) s_1(x) dy_1 dy_2  dx
           \notag  \\
 &= \epsilon^{-2n} \sum_{k=0}^{\infty} a_{k}    \dint_{\mathbb{R}^{n}}  \dint_{\mathbb{R}^{n}}\dint_{\mathbb{R}^{n}}  (b_{1}(x)-b_1(y_1))  K(x-y_1,x-y_2) \mathrm{e}^{-\mathi\frac{\epsilon}{l}v_{1k}y_1} \chi_{_{Q_{1}^{*}}}(y_1)    \notag\\
 &\hspace{8em}   \times  \mathrm{e}^{-\mathi\frac{\epsilon}{l}v_{2k}y_2}\chi_{_{Q_{2}^{*}}}(y_2) \mathrm{e}^{\mathi\frac{\epsilon}{l}(v_{1k}+v_{2k})x}   \chi_{_{Q}}(x) s_1(x) dy_1 dy_2  dx.                   \notag
 \end{align}  

Set $f_{1k}(y_1)=\mathrm{e}^{-\mathi\epsilon v_{1k}y_1/l} \chi_{_{Q_{1}^{*}}}(y_1)$, $f_{2k}(y_2)=\mathrm{e}^{-\mathi\epsilon v_{2k}y_2/l}\chi_{_{Q_{2}^{*}}}(y_2)$, $h_{k}(x)= \mathrm{e}^{\mathi\epsilon (v_{1k}+v_{2k})x/l}   \chi_{_{Q}}(x)$, then
\begin{align} \label{equ:thm1-2}
\dint_Q &|b_{1}(x)-(b_1)_{Q_{1}^{*}}| dx =  \epsilon^{-2n}\sum_{k=0}^{\infty} a_{k}   \dint_{\mathbb{R}^{n}}  \dint_{\mathbb{R}^{n}}\dint_{\mathbb{R}^{n}} (b_{1}(x)-b_1(y_1))  K(x-y_1,x-y_2)  \notag\\
 &\hspace{8em}   \times  f_{1k}(y_1)  f_{2k}(y_2) h_{k}(x) s_1(x) dy_1 dy_2  dx      \notag\\
 &=   \epsilon^{-2n}\sum_{k=0}^{\infty} a_{k}   \dint_{\mathbb{R}^{n}} \left( \dint_{\mathbb{R}^{n}}\dint_{\mathbb{R}^{n}} (b_{1}(x)-b_1(y_1))  K(x-y_1,x-y_2) f_{1k}(y_1)  f_{2k}(y_2) dy_1 dy_2 \right)    \notag\\
 &\hspace{8em}   \times   h_{k}(x) s_1(x) dx                 \notag\\
 &= \epsilon^{-2n} \sum_{k=0}^{\infty} a_{k}   \dint_{\mathbb{R}^{n}} T_{b_{1}}  ( f_{1k},  f_{2k})(x)   h_{k}(x) s_1(x) dx \\
 &\le C\epsilon^{-2n} \sum_{k=0}^{\infty} |a_{k}|   \dint_{\mathbb{R}^{n}} |T_{b_{1}}  ( f_{1k},  f_{2k})(x)|   |h_{k}(x)| |s_1(x)| dx  \notag \\
 &  \le C \epsilon^{-2n} \sum_{k=0}^{\infty} |a_{k}|   \dint_{\mathbb{R}^{n}} |T_{b_{1}}  ( f_{1k},  f_{2k})(x)|   |h_{k}(x)|  dx             \notag \\
 &\le C\epsilon^{-2n} \sum_{k=0}^{\infty} |a_{k}|   \dint_{Q} |T_{b_{1}}  ( f_{1k},  f_{2k})(x)|    dx.      \notag
 \end{align}

Note that $f_{jk}(y_j)\le \chi_{_{Q_{j}^{*}}}(y_j)$ for every $y_j \in Q_{j}^{*} $, which implies that $f_{jk}\in L^{p_j(\cdot)}(\mathbb{R}^{n}) ~(j=1,2)$ for every $k\in \mathbb{N}$.
Then, from the generalized H\"{o}lder's inequality (\ref{equ:holder-1}) and the hypothesis, we obtain that
\begin{align*}  
 \dint_{Q} |T_{b_{1}}  ( f_{1k},  f_{2k})(x)|    dx  &\le C  \|T_{b_{1}}  ( f_{1k},  f_{2k}) \|_{{q(\cdot)}} \|\chi_{_{Q}}\|_{{q'(\cdot)}}     \\              
& \le C  \|f_{1k} \|_{{p_{1}(\cdot)}} \|f_{2k} \|_{{p_{2}(\cdot)}} \|\chi_{_{Q}}\|_{{q'(\cdot)}} .  
\end{align*}
Hence, since $Q,Q_{j}^{*} \subset Q_{j0}=Q(x_0,(|z_{j}^*|+1)l)~(j=1,2)$, we have that   
\begin{align*}
\dint_Q |b_{1}(x)-(b_1)_{Q_{1}^{*}}| dx  &\le C \epsilon^{-2n} \sum_{k=0}^{\infty} |a_{k}|~  \|f_{1k} \|_{{p_{1}(\cdot)}} \|f_{2k} \|_{{p_{2}(\cdot)}} \|\chi_{_{Q}}\|_{{q'(\cdot)}}     \\    
& \le C \epsilon^{-2n}  \sum_{k=0}^{\infty} |a_{k}|~  \|\chi_{_{Q_{1}^*}} \|_{{p_{1}(\cdot)}} \|\chi_{_{Q_{2}^*}}  \|_{{p_{2}(\cdot)}} \|\chi_{_{Q}}\|_{{q'(\cdot)}}   \\  
& \le C  \epsilon^{-2n} \sum_{k=0}^{\infty} |a_{k}|~  \|\chi_{_{Q_{10}}} \|_{{p_{1}(\cdot)}} \|\chi_{_{Q_{20}}}  \|_{{p_{2}(\cdot)}} \|\chi_{_{Q}}\|_{{q'(\cdot)}}. 
\end{align*}

Since $1/q(\cdot) = 1/p_{_{1}}(\cdot) + 1/p_{_{2}}(\cdot)  -\delta/n$, and $\frac{2n}{\delta+n} <(p_{j})_{-}<(p_{j})_{+}< \frac{2n}{\delta}~(j=1,2)$,
 then $1/q'(\cdot) = 1+  \frac{\delta}{n}- \frac{1}{p_{_{1}}(\cdot)}  - \frac{1}{p_{_{2}}(\cdot)} =\big(\frac{\delta+n}{2n}- \frac{1}{p_{1}(\cdot)} \big)+ \big(\frac{\delta+n}{2n}- \frac{1}{p_{2}(\cdot)} \big)$.  Thence, by applying the  generalized H\"{o}lder's inequality (\ref{equ:holder-2}), we have
\begin{align*}
 \|\chi_{_{Q}}\|_{{q'(\cdot)}}  &\le     \|\chi_{_{Q}}\|_{\big( \frac{\delta+n}{2n}- \frac{1}{p_{1}(\cdot)} \big) ^{-1}}   \|\chi_{_{Q}}\|_{\big( \frac{\delta+n}{2n}- \frac{1}{p_{2}(\cdot)} \big) ^{-1}}    \\
& \le   \|\chi_{_{Q_{10}}}\|_{\big( \frac{\delta+n}{2n}- \frac{1}{p_{1}(\cdot)} \big) ^{-1}}   \|\chi_{_{Q_{20}}}\|_{\big( \frac{\delta+n}{2n}- \frac{1}{p_{2}(\cdot)} \big) ^{-1}}.
\end{align*}

%

For any $j=1,2$,
 by denoting  $h'_{j}= \big( \frac{\delta+n}{2n}- \frac{1}{p_{j}(\cdot)} \big)^{-1}$, we get that   $\frac{1}{h'_{j}}=  \frac{\delta+n}{2n}- \frac{1}{p_{j}(\cdot)} $, that is, $\frac{1}{p_{j}(\cdot)}  = \frac{1}{h_{j}} - \big(1-\frac{\delta+n}{2n} \big) = \frac{1}{h_{j}} - \big(n-\frac{\delta+n}{2} \big)/n  $.  Thus, since $p_1(\cdot), p_2(\cdot)\in \mathscr{C}^{\log}(\R^{n}) \cap\mathscr{P}(\R^{n})$,
 using \Cref{lem.weight-apq-1}, we obtain that  $1\in \mathcal{A}_{\big[\big( \frac{\delta+n}{2n}- \frac{1}{p_{j}(\cdot)}  \big)^{-1} \big]',p_{j}(\cdot) }^{n-\frac{\delta+n}{2}}(\mathbb{R}^{n}) ~(j=1,2)$.  Then, from \eqref{equ:weight-apq}, doubling condition  implied in conditions  \labelcref{equ:character-norm-est,equ.double-condition-2},  we have
\begin{align*}
\|\chi_{_{Q_{10}}} \|_{{p_{1}(\cdot)}} \|\chi_{_{Q_{20}}}  \|_{{p_{2}(\cdot)}} \|\chi_{_{Q}}\|_{{q'(\cdot)}}  &\le \|\chi_{_{Q_{10}}} \|_{{p_{1}(\cdot)}}     \|\chi_{_{Q_{10}}}\|_{\big( \frac{\delta+n}{2n}- \frac{1}{p_{1}(\cdot)} \big) ^{-1}} \\
 & \hspace{2em} \times \|\chi_{_{Q_{20}}}  \|_{{p_{2}(\cdot)}} \|\chi_{_{Q_{20}}}\|_{\big( \frac{\delta+n}{2n}- \frac{1}{p_{2}(\cdot)} \big) ^{-1}}   \\
& \le  C|Q_{10}|^{\frac{\delta+n}{2n}}  |Q_{20}|^{\frac{\delta+n}{2n}}     \\
& \le  C|Q|^{\frac{\delta+n}{n}}.
\end{align*}
Thus
\begin{align*}
\dint_Q |b_{1}(x)-(b_1)_{Q_{1}^{*}}| dx  & \le C  \epsilon^{-2n} \sum_{k=0}^{\infty} |a_{k}|~  \|\chi_{_{Q_{10}}} \|_{{p_{1}(\cdot)}} \|\chi_{_{Q_{20}}}  \|_{{p_{2}(\cdot)}} \|\chi_{_{Q}}\|_{{q'(\cdot)}}       \\
&\le C \epsilon^{-2n} |Q|^{\frac{\delta+n}{n}} \sum_{k=0}^{\infty} |a_{k}|     \\
& \le C  |Q|^{\frac{\delta+n}{n}}.
\end{align*}
Since ${\vec{z}}^{^{*}} =(z_{1}^{*},z_{2}^{*})$ is fixed, by taking supremum over every $Q\subset \mathbb{R}^n$, we obtain that $b_{1}\in \mathbb{L}(\delta) $. Similar argument as above, we can get that $b_{2}\in \mathbb{L}(\delta) $. Therefore, $\vec{b}=(b_{1},b_{2})\in \mathbb{L}(\delta) \times \mathbb{L}(\delta) $.

$\xLongrightarrow{\ \ \ \ }$:\ Now, we   prove that the necessary condition.
Assume that $\vec{b}=(b_{1},b_{2})\in \mathbb{L}(\delta) \times \mathbb{L}(\delta) $ and $\vec{f}=(f_{1},f_{2})\in L^{p_1(\cdot)}(\mathbb{R}^{n}) \times L^{p_2(\cdot)}(\mathbb{R}^{n}) $. 

Then, by the definition of Lipschitz's space ( see (1) of   \cref{def.lip-space}),  condition \eqref{equ:CZK-1}, the monotonically increasing of function $t^{\delta}$ for any $t>0$, and taking into account that $\|f\|_{\Lambda_{\delta}} \approx \|f\|_{\mathbb{L}(\delta)}$, 
we can obtain
\begin{align*}
|T_{b_{1}} (\vec{f}) (x)|  &\le \int_{(\mathbb{R}^{n})^{2}} |b_{1}(x)-b_1(y_1)|  |K(x,\vec{y})|  |f_{1}(y_1)|  |f_{2}(y_2)| d\vec{y}  \\
& \le C \|b_{1}\|_{\mathbb{L}(\delta)} \int_{(\mathbb{R}^{n})^{2}}   \frac{|x-y_1|^{\delta}}{(|x-y_1|+|x-y_2|)^{2n}}  |f_{1}(y_1)|  |f_{2}(y_2)|  d\vec{y} \\
& \le C \|b_{1}\|_{\mathbb{L}(\delta)} \int_{(\mathbb{R}^{n})^{2}}   \frac{|f_{1}(y_1)|  |f_{2}(y_2)| }{(|x-y_1|+|x-y_2|)^{2n-\delta}}   d\vec{y}  \\
& = C \|b_{1}\|_{\mathbb{L}(\delta)} \mathcal{I}_{\delta}(|f_{1}|,|f_{2}|)(x) .
\end{align*}

Since $\frac{1}{p_{_{1}}(x)} +\frac{1}{p_{_{2}}(x)} -\frac{1}{q(x)} = \frac{\delta}{n}$, $0< \delta<1$, $p_1(\cdot),p_2(\cdot)\in \mathscr{C}^{\log}(\R^{n}) \cap\mathscr{P}(\R^{n})$ and $(p_1)_{+},(p_2)_{+}<\frac{2n}{\delta}$.   Thus, using \cref{lem. multilinear-fractional}, we have
\begin{align*}
\|T_{b_{1}} (\vec{f})\|_{q(\cdot)}  &\le C \|b_{1}\|_{\mathbb{L}(\delta)} \|\mathcal{I}_{\delta}(|f_{1}|,|f_{2}|)\|_{q(\cdot)} \\
& \le  C \|b_{1}\|_{\mathbb{L}(\delta)} \|f_{1}\|_{p_{_{1}}(\cdot)} \| f_{2}\|_{p_{_{2}}(\cdot)}.  
\end{align*}


Similar argument as above, we can get that $\|T_{b_{2}} (\vec{f})\|_{q(\cdot)} \le  C \|b_{2}\|_{\mathbb{L}(\delta)} \|f_{1}\|_{p_{_{1}}(\cdot)} \| f_{2}\|_{p_{_{2}}(\cdot)} $

Combining these estimates, the proof is completed.
\end{proof}


The following theorem characterizes the variable spaces $\mathbb{L}(\delta(\cdot)) $  in terms of the boundedness of $T_{_{\Sigma \vec{b}}}$.

\begin{theorem}\label{thm.2-0}
Suppose that $0< \delta(\cdot)<1$,  $0<\gamma<\eta<1/m$,  $ \epsilon_{0}<\epsilon \le 1$, $r(\cdot),p_{_{1}}(\cdot),p_{_{2}}(\cdot),\dots,p_{_{m}}(\cdot)\in \mathscr{C}^{\log}(\R^{n}) \cap\mathscr{P}(\R^{n})$ satisfy $\frac{1}{p(x)}= \frac{1}{p_{_{1}}(x)} + \frac{1}{p_{_{2}}(x)}+\cdots+\frac{1}{p_{_{m}}(x)}$, $r(x)\ge r_\infty$ for almost every $x\in \mathbb{R}^{n}$ and $1<\beta\le r_{-}$ such that $\frac{1}{p(x)}-\frac{1}{q(x)}=\frac{\delta(x)}{n}=\frac{1}{\beta}-\frac{1}{r(x)}$ with $\sup_{x\in \mathbb{R}^{n}} {p_{_j}}(x) \delta(x)<n~(j=1,2,\dots,m)$.
Then $\vec{b}=(b_{1},b_{2},\dots,b_{m})\in \mathbb{L}(\delta(\cdot)) \times\mathbb{L}(\delta(\cdot)) \times\cdots\times \mathbb{L}(\delta(\cdot)) $ and $\|T_{_{b_{j}}} \vec{f} \|_{q(\cdot)}<\infty $ for $ \vec{f}=(f_{1},f_{2},\dots,f_{m})\in  L^{p_1(\cdot)}(\mathbb{R}^{n}) \times L^{p_2(\cdot)}(\mathbb{R}^{n}) \times\cdots\times L^{p_m(\cdot)}(\mathbb{R}^{n})$
 if and only if
$T_{_{b_{j}}}: L^{p_1(\cdot)}(\mathbb{R}^{n}) \times L^{p_2(\cdot)}(\mathbb{R}^{n}) \times\cdots\times L^{p_m(\cdot)}(\mathbb{R}^{n}) \to L^{q(\cdot)}(\mathbb{R}^{n})~(j=1,2,\dots,m)$.

\end{theorem}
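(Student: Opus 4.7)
The plan is to treat the two implications separately, reusing both strategies already developed in the paper.

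For the forward direction ($\Rightarrow$), assume $\vec{b}\in(\mathbb{L}(\delta(\cdot)))^{m}$ together with the finiteness hypothesis $\|T_{b_{j}}(\vec{f})\|_{q(\cdot)}<\infty$. Since the compatibility relation $\tfrac{1}{p(x)}-\tfrac{1}{q(x)}=\tfrac{\delta(x)}{n}=\tfrac{1}{\beta}-\tfrac{1}{r(x)}$ matches exactly the setup of \cref{lem.sharp-estimate-var-1}, applying that lemma gives
\[
\|T_{b_{j}}(\vec{f})\|_{q(\cdot)}\le C\,\bigl\|\bigl(T_{b_{j}}(\vec{f})\bigr)^{\sharp}_{\delta(\cdot),\gamma}\bigr\|_{p(\cdot)},
\]
and the pointwise bound from \cref{lem.sharp-estimate-var} dominates this by $C\,\|b_{j}\|_{\mathbb{L}(\delta(\cdot))}\bigl(\|M_{\eta}(T\vec{f})\|_{p(\cdot)}+\|\mathcal{M}_{L(\log L)}(\vec{f})\|_{p(\cdot)}\bigr)$. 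The first summand is controlled by combining the multilinear Calder\'on--Zygmund boundedness $T:\prod_{i}L^{p_{i}(\cdot)}\to L^{p(\cdot)}$ (the $m$-linear analogue of \cref{lem.CZO-bound}) with the homogeneity \cref{lem.variable-homogeneous} and the boundedness of $M$ on $L^{p(\cdot)/\eta}(\R^{n})$; this last point rests on $\eta<1/m\le p_{-}$ (since $(p_{j})_{-}>1$ gives $\tfrac{1}{p(x)}=\sum\tfrac{1}{p_{j}(x)}<m$) together with \cref{lem.variable-proposition-B}. The second summand is handled by the pointwise estimate $\mathcal{M}_{L(\log L)}(\vec{f})(x)\le C\prod_{i}M^{2}(f_{i})(x)$, the generalized H\"older inequality \cref{lem.holder}(3), and the $L^{p_{i}(\cdot)}$-boundedness of $M$. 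Chaining these estimates yields the desired boundedness of $T_{b_{j}}$.

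For the reverse direction ($\Leftarrow$), assume $T_{b_{j}}:\prod_{i}L^{p_{i}(\cdot)}\to L^{q(\cdot)}$ is bounded; I aim to show $b_{j}\in\mathbb{L}(\delta(\cdot))$ by imitating the Fourier-series argument used in \cref{thm.1-0}. Fix a cube $Q=Q(x_{0},l)$, choose $\vec{z}^{\,*}\in(\R^{n}\setminus\{0\})^{m}$ away from the singularity of $K$ so that $1/K$ admits an absolutely convergent Fourier expansion on a neighborhood of $\vec{z}^{\,*}$, set $Q_{i}^{*}=Q(x_{0}-lz_{i}^{*},l)$, and for $s_{1}(x)=\sgn(b_{1}(x)-(b_{1})_{Q_{1}^{*}})$ test against $f_{ik}(y_{i})=\mathe^{-\mathi\epsilon v_{ik}\cdot y_{i}/l}\chi_{Q_{i}^{*}}(y_{i})$ and $h_{k}(x)=\mathe^{\mathi\epsilon(\sum_{i}v_{ik})\cdot x/l}\chi_{Q}(x)$. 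The same telescoping identity produces
\[
\int_{Q}|b_{1}(x)-(b_{1})_{Q_{1}^{*}}|\,dx\le C\sum_{k}|a_{k}|\int_{Q}|T_{b_{1}}(f_{1k},\dots,f_{mk})(x)|\,dx,
\]
and \cref{lem.holder}(1)(ii) together with the boundedness hypothesis yields
\[
\int_{Q}|b_{1}(x)-(b_{1})_{Q_{1}^{*}}|\,dx\le C\Bigl(\prod_{i=1}^{m}\|\chi_{Q_{i}^{*}}\|_{p_{i}(\cdot)}\Bigr)\|\chi_{Q}\|_{q'(\cdot)}.
\]

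The main obstacle is then to match this product against $C|Q|^{1/\beta}\|\chi_{Q}\|_{r'(\cdot)}$, which is exactly what the $\mathbb{L}(\delta(\cdot))$-norm requires. The clean pointwise identity derived from the hypotheses,
\[
\sum_{i=1}^{m}\tfrac{1}{p_{i}(x)}+\tfrac{1}{q'(x)}=1+\tfrac{\delta(x)}{n}=\tfrac{1}{\beta}+\tfrac{1}{r'(x)},
\]
passes to harmonic-mean exponents upon averaging over $Q$; applying \cref{lem.character-norm} to replace each characteristic-function norm by the corresponding $|Q|^{1/s_{Q}}$ factor, and using the doubling property implied by \eqref{equ:character-norm-est}--\eqref{equ.double-condition-2} to absorb the difference between $Q_{i}^{*}$ and $Q$, closes the estimate. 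The supremum over $Q$ gives $b_{1}\in\mathbb{L}(\delta(\cdot))$, and a symmetric argument handles $b_{2},\dots,b_{m}$. The delicate technical point I expect is the small-cube/large-cube case split inside \cref{lem.character-norm}: the extra hypothesis $r(x)\ge r_{\infty}$ in this theorem, absent in \cref{thm.1-0}, is present precisely to force $\|\chi_{Q}\|_{r'(\cdot)}\approx|Q|^{1/r'_{Q}}$ uniformly in the large-cube regime, allowing both sides of the displayed identity to line up globally.
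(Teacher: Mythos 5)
Your proof is correct and follows the paper's strategy closely in both directions. The $\Rightarrow$ direction is essentially identical: chain \cref{lem.sharp-estimate-var-1} and \cref{lem.sharp-estimate-var}, then bound $\|M_\eta(T\vec f)\|_{p(\cdot)}$ by homogeneity (\cref{lem.variable-homogeneous}), $M$-boundedness on $L^{p(\cdot)/\eta}(\R^n)$, and the $m$-linear analogue of \cref{lem.CZO-bound}, and bound $\|\mathcal M_{L(\log L)}(\vec f)\|_{p(\cdot)}$ via the pointwise domination $\mathcal M_{L(\log L)}(\vec f)\lesssim\prod_i M^2(f_i)$ together with the $L^{p_i(\cdot)}$-boundedness of $M$. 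The $\Leftarrow$ direction uses the same Fourier-series reduction to $\int_Q|b_1-(b_1)_{Q_1^*}|\,dx\lesssim\prod_i\|\chi_{Q_i^*}\|_{p_i(\cdot)}\,\|\chi_Q\|_{q'(\cdot)}$.

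The one place you deviate is the final matching step of $\Leftarrow$. The paper factors $\|\chi_Q\|_{q'(\cdot)}\le\|\chi_Q\|_{r'(\cdot)}\prod_j\|\chi_Q\|_{(\frac{1}{m\beta}-\frac{1}{p_j(\cdot)})^{-1}}$ by the generalized H\"older inequality, invokes \cref{lem.weight-apq-1} to conclude $1\in\mathcal A^{\,n-n/(m\beta)}_{\,[(\frac{1}{m\beta}-\frac{1}{p_j(\cdot)})^{-1}]',\,p_j(\cdot)}$, and then applies the defining inequality \eqref{equ:weight-apq} to each paired factor $\|\chi_{Q_{j0}}\|_{p_j(\cdot)}\|\chi_{Q_{j0}}\|_{(\frac{1}{m\beta}-\frac{1}{p_j(\cdot)})^{-1}}\lesssim|Q_{j0}|^{1/(m\beta)}$. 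You instead propose to pass directly to harmonic-mean exponents via \cref{lem.character-norm}(5) and exploit the exact additivity $\sum_i 1/(p_i)_Q+1/q'_Q=1/\beta+1/r'_Q$, which indeed follows by averaging the pointwise identity over $Q$. This is correct and, if anything, more transparent; but you do need to justify replacing each harmonic mean over the translated cube $Q_i^*$ by the one over $Q$ with uniform constants. That follows because $Q, Q_i^*\subset Q_{i0}$ (a fixed dilate of $Q$) together with \cref{lem.character-norm}(3), and this is exactly the uniformity that the $\mathcal A^{\gamma}_{p(\cdot),q(\cdot)}$ route packages for the paper. Your surmise about the role of $r(x)\ge r_\infty$ is plausible but not stated in the paper's proof of this theorem; it is used in the cited auxiliary estimates (via \cref{lem.character-norm}(1) applied to $r'(\cdot)$), not in the step you identify.
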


%
%

\begin{proof}
Without loss of generality, we only consider the case that $m=2$. Actually, similar procedure work for all $m\in \mathbb{N}$.

$\xLongleftarrow{\ \ \ \ }$:\  We first prove that the condition is  sufficient.
Assume that $T_{_{b_{1}}}$ maps $L^{p_1(\cdot)}(\mathbb{R}^{n}) \times L^{p_2(\cdot)}(\mathbb{R}^{n})$ into $ L^{q(\cdot)}(\mathbb{R}^{n})$. By proceeding as in \eqref{equ:thm1-2} with $Q=Q(x_{0},\ell)$ and $Q_{j}^{*} = Q(y_{j0},\ell) \subset \mathbb{R}^{n} ~(j=1,2)$, we have
\begin{align*}
 \dint_{Q} |b_{1}(x)-(b_1)_{Q_{1}^{*}}| dx   &\le C\epsilon^{-2n} \sum_{k=0}^{\infty} |a_{k}|   \dint_{Q} |T_{b_{1}}  ( f_{1k},  f_{2k})(x)|    dx,
\end{align*}
where $f_{jk}\in L^{p_j(\cdot)}(\mathbb{R}^{n}) $ and $\|f_{jk} \|_{{p_{j}(\cdot)}} \le \|\chi_{_{Q_{j}^{*}}}\|_{{p_{j}(\cdot)}} ~(j=1,2)$ for every $k\in \mathbb{N}$.
Then, from the generalized H\"{o}lder's inequality (\ref{equ:holder-1}) and the hypothesis, we obtain that
\begin{align*}
 \dint_{Q} |T_{b_{1}}  ( f_{1k},  f_{2k})(x)|    dx  &\le C  \|T_{b_{1}}  ( f_{1k},  f_{2k}) \|_{{q(\cdot)}} \|\chi_{_{Q}}\|_{{q'(\cdot)}}     \\
& \le C  \|f_{1k} \|_{{p_{1}(\cdot)}} \|f_{2k} \|_{{p_{2}(\cdot)}} \|\chi_{_{Q}}\|_{{q'(\cdot)}},
\end{align*}
and then
\begin{align*}
 \dint_{Q} |b_{1}(x)-(b_1)_{Q_{1}^{*}}| dx   &\le C\epsilon^{-2n} \sum_{k=0}^{\infty} |a_{k}|  \|\chi_{_{Q_{1}^{*}}} \|_{{p_{1}(\cdot)}}  \|  \chi_{_{Q_{2}^{*}}} \|_{{p_{2}(\cdot)}} \|\chi_{_{Q}}\|_{{q'(\cdot)}}.  
\end{align*}

Set $Q,Q_{j}^{*} \subset Q_{j0}=Q(x_0,(|z_{j}^*|+1)\ell)~(j=1,2)$. Since  
$\frac{1}{p(\cdot)}-\frac{1}{q(\cdot)}=\frac{\delta(\cdot)}{n}=\frac{1}{\beta}-\frac{1}{r(\cdot)}$ , then $1/q'(\cdot) = 1/r'(\cdot) + ( 1/\beta- 1/p(\cdot) ) = \frac{1}{r'(\cdot) }+ \big( \frac{1}{2\beta} - \frac{1}{p_{1}(\cdot)} \big)  + \big( \frac{1}{2\beta} - \frac{1}{p_{2}(\cdot)} \big)$.  Thence, by applying the  generalized H\"{o}lder's inequality (\ref{equ:holder-2}), we get that
\begin{align*}
 \|\chi_{_{Q}}\|_{{q'(\cdot)}}  &\le    \|\chi_{_{Q}}\|_{{r' (\cdot)}} \|\chi_{_{Q}}\|_{\big( \frac{1}{2\beta} - \frac{1}{p_{1}(\cdot)} \big) ^{-1}}   \|\chi_{_{Q}}\|_{\big( \frac{1}{2\beta} - \frac{1}{p_{2}(\cdot)} \big) ^{-1}} \\
& \le   \|\chi_{_{Q_{1}^{*}}}\|_{\big( \frac{1}{2\beta} - \frac{1}{p_{1}(\cdot)} \big) ^{-1}}   \|\chi_{_{Q_{2}^{*}}}\|_{\big( \frac{1}{2\beta} - \frac{1}{p_{2}(\cdot)} \big) ^{-1}}   \|\chi_{_{Q}}\|_{{r' (\cdot)}}    \\
& \le   \|\chi_{_{Q_{10}}}\|_{\big( \frac{1}{2\beta} - \frac{1}{p_{1}(\cdot)} \big) ^{-1}}   \|\chi_{_{Q_{20}}}\|_{\big( \frac{1}{2\beta} - \frac{1}{p_{2}(\cdot)} \big) ^{-1}}   \|\chi_{_{Q}}\|_{{r' (\cdot)}}.
\end{align*}

For any $j=1,2$, let $h'_{j}= \big( \frac{1}{2\beta} - \frac{1}{p_{j}(\cdot)} \big)^{-1}$, we get that $\frac{1}{h'_{j}}=  \frac{1}{2\beta} - \frac{1}{p_{j}(\cdot)} $,  that is, $\frac{1}{p_{j}(\cdot)}  = \frac{1}{h_{j}} - \big(1-\frac{1}{2\beta}  \big) = \frac{1}{h_{j}} - \big(n-\frac{n}{2\beta} \big)/n  $. Thus, since $r(\cdot), p_{1}(\cdot), p_{2}(\cdot)\in   \mathscr{C}^{\log}(\R^{n}) \cap\mathscr{P}(\R^{n})$,
 using \Cref{lem.weight-apq-1}, we obtain that $1\in \mathcal{A}_{\big[\big( \frac{1}{2\beta} - \frac{1}{p_{j}(\cdot)} \big)^{-1} \big]',p_{j}(\cdot) }^{n-\frac{n}{2\beta}}(\mathbb{R}^{n}) ~(j=1,2)$.  Then, from \eqref{equ:weight-apq},  \cref{lem:T8-condition} and doubling condition  implied in conditions \labelcref{equ:character-norm-est,equ.double-condition-2},  we have
 \begin{align*}
\|\chi_{_{Q_{1}^{*}}} \|_{{p_{1}(\cdot)}}  \|  \chi_{_{Q_{2}^{*}}} \|_{{p_{2}(\cdot)}} \|\chi_{_{Q}}\|_{{q'(\cdot)}}  &\le \|\chi_{_{Q_{10}}} \|_{{p_{1}(\cdot)}}  \|  \chi_{_{Q_{20}}} \|_{{p_{2}(\cdot)}} \|\chi_{_{Q}}\|_{{q'(\cdot)}}          \\
 &\le \|\chi_{_{Q_{10}}} \|_{{p_{1}(\cdot)}}     \|\chi_{_{Q_{10}}}\|_{\big( \frac{1}{2\beta} - \frac{1}{p_{1}(\cdot)} \big) ^{-1}} \\
 & \hspace{2em} \times \|\chi_{_{Q_{20}}}  \|_{{p_{2}(\cdot)}} \|\chi_{_{Q_{20}}}\|_{\big( \frac{1}{2\beta} - \frac{1}{p_{2}(\cdot)} \big) ^{-1}}   \|\chi_{_{Q}}\|_{{r' (\cdot)}}  \\
& \le  C|Q_{10}|^{\frac{1}{2\beta} }  |Q_{20}|^{\frac{1}{2\beta}  } ~ \|\chi_{_{Q}}\|_{{r'(\cdot)}}    \\
& \le  C|Q|^{1/\beta}~ \|\chi_{_{Q}}\|_{{r'(\cdot)}}.
\end{align*}
 Hence,
\begin{align*}
\dint_Q |b_{1}(x)-(b_1)_{Q_{1}^{*}}| dx  &\le C \epsilon^{-2n} \sum_{k=0}^{\infty} |a_{k}|~  |Q|^{1/\beta}~ \|\chi_{_{Q}}\|_{{r'(\cdot)}}    \\
& \le C   |Q|^{1/\beta}~ \|\chi_{_{Q}}\|_{{r'(\cdot)}}.
\end{align*}
Since ${\vec{z}}^{^{*}} =(z_{1}^{*},z_{2}^{*})$ is fixed, by taking supremum over every $Q\subset \mathbb{R}^n$, we obtain that $b_{1}\in \mathbb{L}(\delta(\cdot)) $. Similar argument as above, we can get that $b_{2}\in \mathbb{L}(\delta(\cdot)) $. 

$\xLongrightarrow{\ \ \ \ }$:\ Now, we   prove that the necessary condition.
Assume that $\vec{b}=(b_{1},b_{2})\in \mathbb{L}(\delta(\cdot)) \times \mathbb{L}(\delta(\cdot)) $ and $\vec{f}=(f_{1},f_{2})\in L^{p_1(\cdot)}(\mathbb{R}^{n}) \times L^{p_2(\cdot)}(\mathbb{R}^{n}) $ such that $\|T_{_{b_{j}}}  (\vec{f}) \|_{q(\cdot)}<\infty ~(j=1,2)$.
 From  \Cref{lem.sharp-estimate-var-1}, for any given  $\gamma\in (0,1)$, it follows that
\begin{align*}
 \|T_{_{b_{j}}}  (\vec{f}) \|_{q(\cdot)}     &\le C \|(T_{_{b_{j}}}  (\vec{f}))_{\delta(\cdot),\gamma}^{\sharp} \|_{p(\cdot)}.
\end{align*}

Since $ \frac{1}{k_{0}}= \epsilon_{0}<\epsilon \le 1$,  $0<\gamma<\eta<1/m$,    by using \Cref{lem.sharp-estimate-var}, we have that
\begin{align*}
 \|T_{_{b_{j}}} (\vec{f}) \|_{q(\cdot)}     &\le C \|(T_{_{b_{j}}} (\vec{f}))_{\delta(\cdot),\gamma}^{\sharp} \|_{p(\cdot)} \\
  &\le C \|b_{j}\|_{\mathbb{L}(\delta(\cdot))} \Big(\|M_{\eta}(T\vec{f})\|_{p(\cdot)}+\|\mathcal{M}_{L(\log L)}(\vec{f})\|_{p(\cdot)} \Big).
\end{align*}
Recall the pointwise equivalence $M_{L(\log L)}(g)(x)\approx M^{2}(g)(x)$ for any locally integrable function $g$ \cite[see (21) in ][]{perez1995endpoint} and
\begin{align*}
\mathcal{M}_{L(\log L)}(\vec{f})(x)  &\le \prod_{i=1}^{2} \Big( \sup\limits_{Q\ni x}    \|f_{i}\|_{L(\log L) ,Q}\Big)  =\prod_{i=1}^{2} M_{L(\log L)}(f_{i})(x),
\end{align*}
then, by the generalized H\"{o}lder's inequality (\cref{lem.holder}) and the hypothesis on $p(\cdot)$, there has
\begin{align*}
\|\mathcal{M}_{L(\log L)}(\vec{f})\|_{p(\cdot)}  &\le C \Big\|\prod_{i=1}^{2} M_{L(\log L)}(f_{i}) \Big\|_{p(\cdot)}  \le C \Big\|\prod_{i=1}^{2} M^{2}(f_{i}) \Big\|_{p(\cdot)} \le C \prod_{i=1}^{2} \|f_{i} \|_{p_{i}(\cdot)},
\end{align*}
where in the last inequality, we make use of the $L^{p_{i}(\cdot)}(\mathbb{R}^{n})$ boundedness of $M$ twice.

In addition,  apply \cref{lem.variable-proposition-B,lem.variable-homogeneous,lem.holder,lem.CZO-bound}, it is easy to see that $ p_{_{1}}(\cdot),~p_{_{2}}(\cdot) \in \mathscr{B}(\R^{n})$, and that $ p(\cdot)\in    \mathscr{P}(\R^{n})$  satisfies $(p(\cdot)/p_{0})' \in \mathscr{B}(\R^{n})$ with some $p_{0}\in (0,p_{-})$, then we have
\begin{align*}
 \|M_{\eta}(T\vec{f})\|_{p(\cdot)}     &\le C \Big\|  |T(\vec{f})|^{\eta} \Big\|_{p(\cdot)/\eta}^{1/\eta}
   \le C \Big\|   T(\vec{f})  \Big\|_{p(\cdot)}    \le C \prod_{i=1}^{2} \|f_{i} \|_{p_{i}(\cdot)}.
\end{align*}

Thus, we obtain that
\begin{align*}
 \|T_{_{b_{j}}} (\vec{f}) \|_{q(\cdot)}     &\le   C  \|b_{j}\|_{\mathbb{L}(\delta(\cdot))} \|f_{1}\|_{p_{1}(\cdot)}\|f_{2}\|_{p_{2}(\cdot)}.
\end{align*}

Combining these estimates, the proof is completed.
\end{proof}

\bigskip

\end{document}